\DeclarePairedDelimiter\ceil{\lceil}{\rceil}
\title[Soficity, Amenability, and LEF-ness for topological full groups]{Soficity, Amenability, and LEF-ness for topological full groups}
\thanks{}
\theoremstyle{plain}
\newtheorem{Thm}{Theorem}[section]
\theoremstyle{definition}
\theoremstyle{plain}
\newtheorem{thm}[Thm]{Theorem}
\newtheorem{lem}[Thm]{Lemma}
\newtheorem{cor}[Thm]{Corollary}
\newtheorem{prop}[Thm]{Proposition}
\theoremstyle{definition}
\newtheorem{defn}[Thm]{Definition}
\newtheorem{ques}[Thm]{Question}
\newtheorem{rmk}[Thm]{Remark}
\newenvironment{customthm}[1]
{\innercustomthm}
{\endinnercustomthm}
\newenvironment{customlem}[1]
{\innercustomlem}
{\endinnercustomlem}
\newcommand\barbelow[1]{\stackunder[1.2pt]{$#1$}{\rule{.8ex}{.075ex}}}
\newcommand{\A}[0]{\mathbb{A}}
\newcommand{\B}{B}
\newcommand{\J}{J}
\newcommand{\K}{\mathcal{K}}
\newcommand{\D}{D}
\newcommand{\Ch}{D}
\newcommand{\Zh}{\mathcal{Z}}
\newcommand{\E}{E}
\newcommand{\Oh}{\mathcal{O}}
\newcommand{\T}{{\mathbb T}}
\newcommand{\R}{{\mathbb R}}
\newcommand{\N}{{\mathbb N}}
\newcommand{\Z}{{\mathbb Z}}
\newcommand{\C}{{\mathbb C}}
\newcommand{\Q}{{\mathbb Q}}
\newcommand{\F}{{\mathbb F}}
\newcommand{\aut}{\mathrm{Aut}}
\newcommand{\supp}{\mathrm{supp}}
\newcommand{\fol}{\operatorname{F{\o}l}}
\newcommand{\eps}{\varepsilon}
\numberwithin{equation}{section}
\newcommand{\map}[0]{\operatorname{Map}}
\newcommand{\id}{\mathrm{id}}
\newcommand{\halpha}{\widehat{\alpha}}
\newcommand{\calpha}{\widehat{\alpha}}
\newcommand{\tih}{\widetilde {h}}
\newcommand\set[1]{\left\{#1\right\}}  
\newcommand\mset[1]{\left\{\!\!\left\{#1\right\}\!\!\right\}}
\newcommand{\CA}[0]{\mathcal{A}} \newcommand{\CB}[0]{\mathcal{B}}
\newcommand{\CC}[0]{\mathcal{C}} \newcommand{\CD}[0]{\mathcal{D}}
\newcommand{\CE}[0]{\mathcal{E}} \newcommand{\CF}[0]{\mathcal{F}}
\newcommand{\CG}[0]{\mathcal{G}} \newcommand{\CH}[0]{\mathcal{H}}
\newcommand{\CO}[0]{\mathcal{O}} \newcommand{\CP}[0]{\mathcal{P}}
\newcommand{\CQ}[0]{\mathcal{Q}} 
\newcommand{\CS}[0]{\mathcal{S}} \newcommand{\CT}[0]{\mathcal{T}}
\newcommand{\CU}[0]{\mathcal{U}}
\newcommand{\Ra}[0]{\Rightarrow}
\newcommand{\La}[0]{\Leftarrow}
\newcommand{\LRa}[0]{\Leftrightarrow}
\newcommand{\quer}[0]{\overline}
\newcommand{\eins}[0]{\mathbf{1}}			
\newcommand{\diag}[0]{\operatorname{diag}}
\newcommand{\ad}[0]{\operatorname{Ad}}
\newcommand{\ev}[0]{\operatorname{ev}}
\newcommand{\fin}[0]{{\subset\!\!\!\subset}}
\newcommand{\diam}[0]{\operatorname{diam}}
\newcommand{\Hom}[0]{\operatorname{Hom}}
\newcommand{\dst}[0]{\displaystyle}
\newcommand{\spp}[0]{\operatorname{supp}}
\newcommand{\lsc}[0]{\operatorname{Lsc}}
\newcommand{\del}[0]{\partial}
\newcommand{\GU}[0]{\CG^{(0)}}
\newcommand{\vol}[0]{\operatorname{Vol}}
\newtheorem{lemma}[Thm]{Lemma}
\theoremstyle{definition}
\numberwithin{equation}{Thm}
\begin{document}
	\global\long\def\floorstar#1{\lfloor#1\rfloor}
	\global\long\def\ceilstar#1{\lceil#1\rceil}	
	
	\global\long\def\B{B}
	\global\long\def\A{A}
	\global\long\def\J{J}
	\global\long\def\K{\mathcal{K}}
	\global\long\def\D{D}
	\global\long\def\Ch{D}
	\global\long\def\Zh{\mathcal{Z}}
	\global\long\def\E{E}
	\global\long\def\Oh{\mathcal{O}}

	\global\long\def\T{{\mathbb{T}}}
	\global\long\def\BR{{\mathbb{R}}}
	\global\long\def\N{{\mathbb{N}}}
	\global\long\def\Z{{\mathbb{Z}}}
	\global\long\def\C{{\mathbb{C}}}
	\global\long\def\Q{{\mathbb{Q}}}

	\global\long\def\aut{\mathrm{Aut}}
	\global\long\def\supp{\mathrm{supp}}

	\global\long\def\eps{\varepsilon}

	\global\long\def\id{\mathrm{id}}

	\global\long\def\halpha{\widehat{\alpha}}
	\global\long\def\calpha{\widehat{\alpha}}

	\global\long\def\tih{\widetilde{h}}

	\global\long\def\opFol{\operatorname{F{\o}l}}

	\global\long\def\opRange{\operatorname{Range}}

	\global\long\def\opIso{\operatorname{Iso}}

	\global\long\def\dimnuc{\dim_{\operatorname{nuc}}}

	\global\long\def\set#1{\left\{  #1\right\}  }

	
	\global\long\def\mset#1{\left\{  \!\!\left\{  #1\right\}  \!\!\right\}  }

	\global\long\def\Ra{\Rightarrow}
	\global\long\def\La{\Leftarrow}
	\global\long\def\LRa{\Leftrightarrow}

	\global\long\def\quer{\overline{}}
	\global\long\def\eins{\mathbf{1}}
	\global\long\def\diag{\operatorname{diag}}
	\global\long\def\ad{\operatorname{Ad}}
	\global\long\def\ev{\operatorname{ev}}
	\global\long\def\fin{{\subset\!\!\!\subset}}
	\global\long\def\diam{\operatorname{diam}}
	\global\long\def\Hom{\operatorname{Hom}}
	\global\long\def\dst{{\displaystyle }}
	\global\long\def\spp{\operatorname{supp}}
	\global\long\def\spo{\operatorname{supp}_{o}}
	\global\long\def\del{\partial}
	\global\long\def\lsc{\operatorname{Lsc}}
	\global\long\def\GU{\CG^{(0)}}
	\global\long\def\HU{\CH^{(0)}}
	\global\long\def\AU{\CA^{(0)}}
	\global\long\def\BU{\CB^{(0)}}
	\global\long\def\CUU{\CC^{(0)}}
	\global\long\def\DU{\CD^{(0)}}
	\global\long\def\QU{\CQ^{(0)}}
	\global\long\def\TU{\CT^{(0)}}
	\global\long\def\CUUU{\CC'{}^{(0)}}
	
	\global\long\def\bA{\mathbb{A}}
	\global\long\def\AUl{(\CA^{l})^{(0)}}
	\global\long\def\BUl{(B^{l})^{(0)}}
	\global\long\def\HUp{(\CH^{p})^{(0)}}
	\global\long\def\sym{\operatorname{Sym}}
	
	\global\long\def\properlength{proper}

	\global\long\def\interior#1{#1^{\operatorname{o}}}
	
	\author{Xin Ma}
	\email{xma@fields.utoronto.ca}
	\address{Fields Institute,
		University of Toronto,
		Toronto, ON, M5T 3J1, Canada}
	
	\keywords{Topological full groups, Soficity, Amenability, LEF groups}
	
	\date{Feb 28, 2024}

	\begin{abstract}
	In this paper, we study several finite approximation properties of topological full groups of group actions on the Cantor set such that free points are dense. Firstly, we establish that for such a distal action $\alpha$ of a countable discrete group $G$ on the Cantor set, the topological full group $[[\alpha]]$ is amenable if and only if $G$ is amenable. This result is obtained through a novel method that detects hyperfiniteness in certain sofic approximation graph sequences of finitely generated subgroups of $[[\alpha]]$. We also provide estimates for related Følner functions. Next, we obtain negative results on the amenability of topological full groups for actions with zero topological entropy by calculating the topological entropy of certain examples provided by Elek and Monod.
Furthermore, we demonstrate that the topological full group $[[\alpha]]$ of a minimal topologically free residually finite action $\alpha$ on the Cantor set is locally embeddable in the class of finite groups (LEF). This generalizes a result previously obtained by Grigorchuk and Medynets in the case of minimal $\Z$-actions. As an application, we show that topological full groups of certain Toeplitz subshifts on free groups are LEF and therefore sofic.
	\end{abstract}
	\maketitle

 \section{Introduction}
 Let $\alpha: \Gamma\curvearrowright X$ be a continuous action of a countable discrete group $\Gamma$ on the Cantor set $X$. The \textit{topological full group} of $\alpha$, denoted by $[[\alpha]]$, consists of all homeomorphisms $\varphi: X\to X$ for which there exists a continuous map $c(\varphi): X\to\Gamma$ such that $\varphi(x)=\alpha_{c(\varphi)(x)}(x)$ for any $x\in X$. Here, $c(\varphi)$ is called a \textit{continuous orbit cocycle}. Equivalently, this means that there are a clopen partition of $X$ by $X=\bigsqcup_{i=1}^nX_i$ and group elements $\gamma_1,\dots, \gamma_n\in \Gamma$ such that $\varphi(x)=\gamma_i\cdot x$ for any $x\in X_i$. Topological full groups are originally introduced as an algebraic invariant to classify topological orbit equivalence relations (see e.g., \cite{G-P-S}), then they have been generalized to the groupoid setting and also found many applications in geometric group theory. See more in, e.g. \cite{El}, \cite{G-M}, \cite{J-M}, \cite{Matui2}, \cite{Ne} and their introductions.    
  This class of groups has been verified as a new source for producing interesting groups with certain algebraic or geometric properties. For instance, in \cite{J-M}, for a minimal $\Z$-action $\alpha$ on the Cantor set,
Juschenko and Monod proved that the topological full group $[[\alpha]]$ is amenable. Combining with an earlier result obtained by Matui on minimal subshifts in \cite{Matui}, the derived subgroup $D[[\alpha]]$  actually provides the first example of the finitely generated infinite amenable simple group. Using topological full groups,  in \cite{Matt}, Matt Bon found the first example of finely generated simple groups with the Liouville property. Nekrashevych then produced in \cite{Ne1} the first examples of finitely generated simple groups with intermediate growth.

However, the problem of determining which dynamical systems possess amenable topological full groups remains a significant challenge. This question is also closely connected to various other open problems in dynamics and group theory, such as Katok's IET problem (refer to \cite{J-M-M-S} for further details).
It is worth noting that the examples presented in \cite{J-M} primarily focus on subshifts, which correspond to expansive actions. Therefore, an initial approach could involve investigating whether the techniques employed in \cite{J-M}, \cite{J-N-S}, and \cite{J-M-M-S} can be extended to more general amenable acting groups. However, there are two main obstacles to consider. 

Firstly, the key technique utilized in \cite{J-M}, \cite{J-N-S}, and \cite{J-M-M-S}, known as "extensive amenability" for actions, has only been verified for recurrent actions within the framework of random walks on the orbits. Currently, this verification is solely established for (virtually) $\Z$- and $\Z^2$-actions, and generalizing this technique to other amenable acting groups remains an open question.
On the other hand, an example was constructed by Elek and Monod in \cite{E-M} presenting a minimal free $\Z^2$-subshift whose topological full group contains a free subgroup and, consequently, is non-amenable.

In contrast to subshifts, Cortez and Medynets demonstrated in \cite{C-M} that the topological full group of a minimal free equicontinuous action is amenable. They achieved this by reducing to free exact odometer actions (see \cite[Theorem 2.7]{C-M}). Motivated by this result and the fact that all equicontinuous actions are distal and all distal actions have zero topological entropy (see, e.g., \cite[Corollary 12.25]{K-L}), Yongle Jiang, Hanfeng Li, and Guohua Zhang posed the following question to the author.

\begin{ques}(Jiang, Li, Zhang)
Let $\alpha: G\curvearrowright X$ be a dynamical system of an amenable group $G$ on the Cantor set $X$. Suppose $\alpha$ is distal or has topological entropy $h_{\operatorname{top}}(X, G)=0$. Then, is the topological full group $[[\alpha]]$ still amenable? 
\end{ques}
 
We completely answer this question in this paper for actions whose free points are dense (e.g., topologically free actions). We require neither minimality nor full freeness. We remark that our assumption is quite mild because otherwise, the choice of continuous orbit cocycle for an element in the topological full group could be highly non-unique. 
Specifically, we prove a positive result for distal actions in Theorem \ref{thm: main 1} below and a negative result for actions with zero topological entropy in Theorem \ref{thm: main 3}.

\begin{customthm}{A}[Theorem \ref{thm: equi-amenable}]\label{thm: main 1}
		Let $\alpha: G\curvearrowright X$ be a distal action of an infinite countable discrete group $G$ on the Cantor set $X$ such that free points are dense. Then $[[\alpha]]$ is amenable if and only if $G$ is amenable.
	\end{customthm}

The proof of Theorem \ref{thm: main 1} relies on a novel combinatorial approach that involves verifying the \textit{hyperfiniteness} introduced by Elek in \cite{El1}, of specific sequences of sofic approximation graphs for finitely generated subgroups of $[[\alpha]]$. This verification is carried out once the soficity of $[[\alpha]]$ has been established, as stated in Proposition \ref{prop: sofic 2}. It is important to highlight that the methods utilized by Cortez and Medynets in \cite{C-M} are not directly applicable in our current context. The key reason is that their approach relies on the minimality and freeness of the action, which allows them to establish an odometer structure that is, however, not necessary for our purposes. For more detailed information, we refer to Remark \ref{rem: why not Cortez}. As a result, our approach leads to the discovery of additional examples of amenable topological full groups, even in the absence of minimality and freeness of the action, e.g., such amenability result is preserved by certain extensions. See Corollary \ref{cor: amenable topo full groups}.

Additionally, it is worth emphasizing that our approach does not rely on the concept of extensive amenability, setting it apart from previous methods in \cite{J-M}, \cite{J-N-S}, and \cite{J-M-M-S}. Moreover, a notable advantage of our methodology is the direct identification of Følner sets of $[[\alpha]]$ within the sequences of sofic approximation graphs for finitely generated subgroups of $[[\alpha]]$, based on \cite[Theorem 6.2]{Ka}. As a result, we can provide estimates for the Følner functions for a finitely generated subgroup $\Gamma$ of $[[\alpha]]$. In general, these estimates rely exclusively on the size of the largest tile in the quasi-tiling for Følner sets of the acting group $G$ and the tiles are produced recursively from the Ornstein-Weiss quasi-tiling process(see, e.g., \cite[Theorem 4.36]{K-L}). Consequently, the Følner function for $\Gamma$ is also recursive in general. We include the results in the appendix and refer to Theorems \ref{thm: folner function} and \ref{thm: Folner function 2} for general amenable acting groups and acting groups of local subexponential growth. 

However, if the acting group has a clear geometric picture, e.g., direct sums of $\Z$,  we provide a very explicit bound for its F{\o}lner function as a corollary of Theorem \ref{thm: main 1}. Recall the F{\o}lner function for a finitely generated $G$ with a finite generator set $S$ is defined to be $\fol_{\Gamma, S}(\epsilon)=\min\{|F|: F\subset \Gamma, |\partial_S^- F|\leq \epsilon|F|\}$, where $\partial_S^-F$ is the interior boundary of $F$ with respect to $S$ (see Definition \ref{defn: bdries}).
\begin{cor}(Theorem \ref{thm: folner function 3})\label{cor: polynomial growth}
Let $I$ be a countable index set (could be finite) and $\alpha: \bigoplus_{i\in I}\Z\curvearrowright X$  a distal action on the Cantor set $X$ such that free points are dense.
     Let $T$ be a finite symmetric subset of $[[\alpha]]$ and denote by $\Gamma=\langle T\rangle\leq [[\alpha]]$. Then there are integers  $l, m, d, C\in \N_+$ depending only on $T$ such that for any $\epsilon>0$,  one has \[\fol_{\Gamma, T}(\epsilon)\leq \ceil{\frac{m}{1-(1-\epsilon/C)^{1/d}}}^{dl}.\]
As a consequence, $\Gamma$ is of polynomial growth.
\end{cor}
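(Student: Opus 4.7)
The plan is to apply the general Følner function estimate from Theorem~\ref{thm: Folner function 2} (for acting groups of local subexponential growth) and to exploit the fact that any finitely generated subgroup of $\bigoplus_{i \in I} \Z$ is isomorphic to some $\Z^d$, which admits particularly simple cube Følner sets.

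First I would reduce to a finite-rank situation. Since $T$ is finite and each continuous cocycle $c(t) \colon X \to \bigoplus_{i \in I} \Z$ takes only finitely many values (its image being a continuous function on the compact space $X$), the subgroup $H \leq \bigoplus_{i \in I} \Z$ generated by $\bigcup_{t \in T} c(t)(X)$ is finitely generated, hence torsion-free abelian, hence isomorphic to $\Z^d$ for some $d$ depending only on $T$. Consequently $\Gamma \leq [[\alpha|_H]]$, and the Ornstein--Weiss quasi-tiling underlying Theorem~\ref{thm: Folner function 2} can be run inside $H \cong \Z^d$ rather than all of $G$.

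Next I would supply explicit Følner sets for $\Z^d$: the cube $[0, N-1]^d \subset \Z^d$ has cardinality $N^d$ and interior boundary (with respect to the standard generators) of size at most $2dN^{d-1}$, so a cube of side length $\lceil 2d/\delta \rceil$ is $\delta$-Følner. Since such cubes tile $\Z^d$ perfectly, the Ornstein--Weiss quasi-tiling invoked in the proof of Theorem~\ref{thm: Folner function 2} can be replaced by an honest cube tiling, and the size of the largest tile required to reach Følner ratio $\delta$ is at most $\lceil 2d/\delta \rceil^{d}$, which is polynomial in $1/\delta$.

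Plugging these explicit bounds into the recursive Følner construction for $\Gamma$ then yields constants $l, m, C \in \N_+$ depending only on $T$ (through $|T|$, the range of the cocycles $c(t)$, and the rank $d$ of $H$) such that $\fol_{\Gamma, T}(\epsilon) \leq \lceil m/(1 - (1 - \epsilon/C)^{1/d}) \rceil^{dl}$. Since $1 - (1 - \epsilon/C)^{1/d}$ is comparable to $\epsilon/(dC)$ as $\epsilon \to 0^+$, this bound is $O(\epsilon^{-dl})$. A short argument then converts a polynomial Følner function into polynomial volume growth: given a $(1/n)$-Følner set $F_n \subset \Gamma$ with $|F_n| \leq C' n^{dl}$, the estimate $|TF_n \setminus F_n| \leq |T| \cdot |\partial_T^- F_n| \leq |T||F_n|/n$ gives $|T^n F_n| \leq (1 + |T|/n)^n |F_n| \leq e^{|T|} C' n^{dl}$, and since $T^n F_n$ contains a translate of the ball of radius $n$ in $\Gamma$ we conclude $|B_n| \leq e^{|T|} C' n^{dl}$. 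The main technical obstacle is carefully tracking the recursive Ornstein--Weiss construction with the cube tiles of $\Z^d$ to recover the precise exponent $dl$ and the stated form of the bound, rather than a weaker polynomial estimate.
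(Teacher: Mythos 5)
Your overall strategy is the same as the paper's: reduce to a finitely generated subgroup $H\cong\Z^d$ of $\bigoplus_{i\in I}\Z$ using the finiteness of the cocycle ranges, replace the Ornstein--Weiss quasi-tiling by an exact tiling of $\Z^d$ by cubes of explicitly bounded side length, and feed the resulting partitions into the hyperfiniteness machinery (Lemma \ref{lem: sofic enough}, Proposition \ref{prop: hyperfiniteness diagonal product}, Theorem \ref{thm: equi-amenable}, Remark \ref{rmk: Folner function}) to get $\fol_{\Gamma,T}(\epsilon)\leq(\text{cube size})^{l}$; the paper does exactly this directly, rather than routing through Theorem \ref{thm: Folner function 2}, and obtains the side length $\lceil 2m/(1-(1-\epsilon/C)^{1/d})\rceil$ by demanding $|\partial_S^-(I_k+c)|\leq(\epsilon/C)|I_k|$. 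One small correction on that point: the boundary estimate must be taken with respect to the finite set $S$ of cocycle values, not the standard generators of $\Z^d$; this is why the constant $m=\max\{\|\vec n\|_\infty:\vec n\in S\}$ enters, and your bound $2dN^{d-1}$ for the standard generators needs to be adjusted accordingly (a harmless fix, but it is where $m$ in the statement comes from).

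There is, however, a genuine flaw in your deduction of polynomial growth. From $|\partial_T^-F_n|\leq|F_n|/n$ you correctly get $|TF_n|\leq(1+|T|/n)|F_n|$, but you cannot iterate this to $|T^nF_n|\leq(1+|T|/n)^n|F_n|$: the Følner estimate is a property of the specific set $F_n$ and gives no control on $|\partial_T^-(TF_n)|$, $|\partial_T^-(T^2F_n)|$, etc., so the inductive step fails. The paper avoids this by invoking the standard fact that the growth function of a finitely generated amenable group is dominated by its Følner function (Coulhon--Saloff-Coste type inequality, cited as \cite[Proposition 14.100]{C-M} in the text); alternatively one can argue via Remark \ref{rmk: bdry}: if $|\partial_T^-F|\leq\epsilon|F|$ with $\epsilon|B(\Gamma,r)|<1$ then $\partial_r^-F\neq F$, so $F$ contains a translate of $B(\Gamma,r)$ and hence $|B(\Gamma,r)|\leq\fol_{\Gamma,T}(\epsilon)$, which combined with the polynomial bound on $\fol_{\Gamma,T}$ yields polynomial growth. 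Your claim that $T^nF_n$ contains a translate of the ball of radius $n$ is fine (given $\id_X\in T$), but the cardinality bound feeding it is the unjustified step.
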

Our Corollary \ref{cor: polynomial growth} extends the special case on free profinite actions discussed in \cite[Proposition 2.1]{Matui3} and \cite[Proposition 4.6]{C-M}, in which $I$ is finite and additional necessary assumptions of minimality and freeness are imposed on $\alpha$.

As the second main result in this paper, we establish negative results concerning the amenability of topological full groups for actions with zero topological entropy by calculating the topological entropy of certain examples in \cite{E-M}. This also implies that it is impossible to further generalize Theorem \ref{thm: main 1} to the entropy zero case.

 \begin{customthm}{B}[Theorem \ref{thm: non-amenable}]\label{thm: main 3}
There exists a minimal free action $\alpha: \Z^2\curvearrowright X$ on the Cantor set $X$ with $h_{\operatorname{top}}(X, \Z^2)=0$ (e.g., certain Elek-Monod's examples in \cite{E-M}) such that $[[\alpha]]$  contains a free group.
\end{customthm}


In addition to studying the amenability, another motivation of this paper is to investigate and identify further examples of LEF (locally embeddable in the class of finite groups) topological full groups. The concept of LEF groups was initially introduced by Gordon and Vershik in \cite[Definition 1]{G-V}. A group $G$ is classified as LEF if, for any finite set $H \subset G$, there exists a finite set $K$ with $H \subset K \subset G$, along with a binary operation $\odot: K^2 \to K$ that forms a group $(K, \odot)$. Furthermore, for any $h_1, h_2 \in H$, the equality $h_1 \cdot h_2 = h_1 \odot h_2$ holds. All LEF groups are sofic.

 To the best knowledge of the author, the only known examples of LEF topological full groups are derived from minimal $\Z$-actions. This result was initially established by Grigorchuk and Medynets in \cite{G-M}, and an alternate proof can be found in \cite{El}. To expand this list, we demonstrate that the topological full groups of minimal topologically free \textit{residually finite actions} (as defined in Definition \ref{defn: residually finite action}) on the Cantor set, are always LEF. Residually finite actions were introduced by Kerr and Nowak in \cite{K-N} and have many connections with the structure theory of $C^*$-algebras. 
	
	\begin{customthm}{C}[Theorem \ref{thm: topo group of residually finite action}]\label{thm: main 2}
		Let $\alpha: G\curvearrowright X$ be a minimal topologically free residually finite action of a countable discrete group on the Cantor set. Then $[[\alpha]]$ is LEF and thus sofic.
	\end{customthm}
	
	It is worth noting that Pimsner's result \cite[Lemma 2]{Pim} and \cite[Proposition 7.1]{K-N} imply that any minimal $\Z$-action on the Cantor set is residually finite. As a result, our Theorem \ref{thm: main 2} can be viewed as a generalization of the result obtained by Grigorchuk and Medynets in \cite{G-M}.
Furthermore, beyond $\Z$-actions, there are numerous other examples of residually finite actions provided in \cite{K-N}, which makes our Theorem C yield new examples in this direction. 

 \begin{cor}(Corollary \ref{cor: amenability and LEF})\label{cor: final cor}
 Let $\alpha: \F_r\curvearrowright X$ ($r\in \N\cup{\infty}$) be a minimal topologically free action such that there exists a $\F_r$-invariant Borel probability measure on $X$. Then $[[\alpha]]$ is LEF. In particular, there exists a minimal free $\F_r$-Toeplitz subshift $\beta$ whose topological full group $[[\beta]]$ is LEF and thus sofic.
\end{cor}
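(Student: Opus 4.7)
The plan is to deduce Corollary \ref{cor: final cor} directly from Theorem \ref{thm: main 2} by showing that the hypotheses of Corollary \ref{cor: final cor} already force $\alpha$ to be a residually finite action in the sense of \cite{K-N}. Once this is in hand, Theorem \ref{thm: main 2} immediately gives that $[[\alpha]]$ is LEF, and then LEF implies sofic is classical from \cite{G-V}. So the real content is the following implication: a minimal topologically free action of $\F_r$ on the Cantor set that preserves a Borel probability measure is residually finite.

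To verify this implication, I would invoke the corresponding existence theorem from Kerr-Nowak \cite{K-N}. The group $\F_r$ is itself residually finite, so it admits a decreasing sequence of finite index normal subgroups with trivial intersection. The construction in \cite{K-N} uses such a chain, together with a measure-theoretic Rokhlin-type approximation of $\alpha$ coming from the invariant probability measure, to build the finite model data (finite sets together with $\F_r$-labellings that asymptotically replicate the orbit structure) required in the definition of a residually finite action. Topological freeness ensures that the continuous cocycle data needed to move from the measurable to the topological framework is well-defined on a dense $G_\delta$, which is precisely what Kerr-Nowak's definition requires. This is exactly the class of examples highlighted in \cite{K-N} as furnishing new residually finite actions beyond $\Z$-systems.

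For the ``in particular'' part, I would exhibit a minimal free $\F_r$-Toeplitz subshift $\beta$ and check the hypothesis of the first assertion for $\beta$. Using the descending chain of finite index normal subgroups of $\F_r$, standard Toeplitz-type constructions (adapted from the $\Z$-case to residually finite groups) produce a minimal $\F_r$-subshift $\beta$ that factors via an almost one-to-one map onto the associated $\F_r$-odometer on the profinite completion. By choosing the Toeplitz pattern carefully one arranges freeness. The odometer is equicontinuous and carries a canonical invariant Haar measure, and lifting Haar measure through the almost one-to-one factor map produces an $\F_r$-invariant Borel probability measure on the Cantor set underlying $\beta$. Hence $\beta$ satisfies the hypotheses of the first part of the corollary, so $[[\beta]]$ is LEF.

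The main obstacle I anticipate is Step 1: the precise matching between the definition of residually finite action in \cite{K-N} and the dynamical data at our disposal. In particular, one must make sure that topological freeness (rather than essential freeness with respect to the given measure) is genuinely enough, and that the measurable sofic/Rokhlin approximation produced from an $\F_r$-invariant measure can be promoted to the continuous finite models demanded by the definition. If \cite{K-N} only state the analogous result under slightly stronger hypotheses, a short lemma would be needed to bridge the gap, presumably by combining topological freeness with the genericity of free points to transfer measurable partitions into clopen ones.
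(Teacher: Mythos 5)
For the first assertion your route coincides with the paper's: quote \cite[Theorem 5.2]{K-N}, which says that a minimal action of $\F_r$ on the Cantor set admitting an $\F_r$-invariant Borel probability measure is residually finite, and then apply Theorem \ref{thm: topo group of residually finite action}. Your anticipated obstacle about topological freeness dissolves, however: the Kerr--Nowak input requires neither topological freeness nor any cocycle data --- Definition \ref{defn: residually finite action} only asks for finite $\epsilon$-dense subsets carrying genuine $G$-actions that approximately track $\alpha$, and for a free group such finite actions can be prescribed generator by generator, so no ``measurable-to-topological'' bridge is needed. Topological freeness is used only as a hypothesis of Theorem \ref{thm: topo group of residually finite action} (through Proposition \ref{prop: free point}), and it is already assumed in the corollary.

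The genuine gap is in the ``in particular'' part. You obtain the invariant measure on the Toeplitz subshift by ``lifting Haar measure through the almost one-to-one factor map'', but for a non-amenable acting group this is not automatic: an invariant measure on a factor need not lift (there is no averaging argument available), and the a.e.-defined equivariant section only pushes Haar measure forward if the set of singleton fibres has full Haar measure in the $\F_r$-odometer, i.e.\ if the Toeplitz subshift is regular. In general that set can be Haar-null, and the existence of invariant measures for Toeplitz subshifts over non-amenable groups is exactly the non-trivial content of the results the paper invokes, namely \cite[Theorems 1.2, 1.3]{C-C-G}, with freeness arranged by \cite[Remark 2.5]{C-C-G}. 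To repair your sketch you would either have to build regularity into the construction (periodic positions of density tending to $1$ with respect to Haar measure on the odometer) and then run your section argument, or simply cite the Cecchi-Bernales--Cortez--G\'omez construction as the paper does.
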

To the best knowledge of the author, the alternating subgroup $\CA(X, \F_r)$ (see \cite{Ne} for the definition) of $[[\beta]]$ constructed from the $\F_r$-Toeplitz subshift $\beta$ in Corollary \ref{cor: final cor} is the first example of a finitely generated simple sofic topological full group arising from minimal free actions with non-amenable acting groups. 

In \cite[Theorem 6.7]{Matui2}, Matui demonstrated that the topological full group of a one-sided irreducible shift of finite type possesses the Haggerup property. We observe that if $\alpha$ is a minimal free equicontinuous action by a group with the Haggerup property, then the topological full group $[[\alpha]]$ also exhibits the Haggerup property. This can be established using the same method as in \cite{C-M}. We refer to Remark \ref{rmk: Haggerup property} for more details.

\subsection*{Outline of the paper:} 
The paper is organized as follows. In section 2, we mainly discuss soficity for topological full groups. In section 3, we first study hyperfiniteness for general directed graph sequences and then establish Theorem \ref{thm: main 1} by investigating the hyperfiniteness for certain sofic approximations for finitely generated subgroups of topological full groups under consideration.  In section 4, we calculate the topological entropy of Elek-Monod's example in \cite{E-M} and prove Theorem \ref{thm: main 3}. In section 5, we investigate LEF topological full groups and prove Theorem \ref{thm: main 2}. In the appendix, we provide an upper bound for F{\o}lner functions for topological full groups of free distal systems by general amenable groups.

\section{Soficity of topological full groups}
The class of sofic groups is originally introduced by Gromov and Weiss, which particularly consists of all amenable groups and residually finite groups. It is well-known that there are many equivalent definitions for sofic groups and we record here the following one that appeared in \cite{El}. Denote by $\map(A)$ for a finite set $A$ all maps from $A$ to itself and define the \emph{Hamming distance} on $\map(A)$ by $d_H(f, g)=|\{x\in A: f(x)\neq g(x)\}|/|A|$. 
	\begin{defn}[\cite{El}]\label{defn: sofic}
		A countable discrete group $\Gamma$ is said to be sofic if for any finite $F\subset \Gamma$ and $\epsilon>0$ there exists a finite set $A$ and a mapping $\Theta: \Gamma\to \map(A)$ such that 
		\begin{enumerate}
			\item If $f, g, fg\in F$ then $d_H(\Theta(fg), \Theta(f)\Theta(g))\leq \epsilon$.
			\item If $ e_\Gamma\neq f\in F$ then $d_H(\Theta(f), \id_A)>1-\epsilon$.
			\item $\Theta(e_\Gamma)=\id_A$.
		\end{enumerate}
		In this case,  the map $\Theta$ is said to be a $(F, \epsilon)$-\textit{injective almost action}.
	\end{defn}
	
	Let $\{F_n\}$ be an increasing sequence in $\Gamma$ with $\Gamma=\bigcup_{n=1}^\infty F_n$ and $\{\epsilon_n>0: n\in\N\}$ be a decreasing sequence  converging to zero. In this case, a sequence $\{A_n: n\in \N\}$ of finite sets equipped with a sequence of $(F_n, \epsilon_n)$-injective almost action $\theta_n$ on each $A_n$, is called a \textit{sofic approximation} of $\Gamma$.
	Suppose $\Gamma$ is finitely generated, i.e., $\Gamma=\langle T\rangle$ for a finite set $T$. Then we can also assign a $T$-labeled graph structure $G_n$ on each $A_n$ by claiming the vertex set $V(G_n)=A_n$ and define edges to be the collection of all $(x, \theta_n(t)(x))$, labeled by $t$, for all $t\in T$ and $x\in A_n$.  We then refer to the graph sequence $\CG=\{G_n: n\in \N\}$ as a \textit{sofic approximation graph sequence} for $\Gamma$.

The following criterion establishing soficity for groups can be found in \cite{El}, called \textit{compressed sofic representation}. We also include the proof here for completeness.

\begin{prop}\cite[Lemma 2.1]{El}\label{prop: compressed sofic representation}
	Let $\{e_\Gamma=\gamma_0, \gamma_1,\dots, \}$ be an enumeration of a countable discrete group $\Gamma$. Suppose for any $i\geq 1$ there is a constant $\epsilon_i>0$ and for any $n\geq 1$ there is an map $\Theta_n: \Gamma\to \map(A_n)$ for some non-empty finite set $A_n$ with $\Theta_n(e_\Gamma)=\id_{A_n}$ and satisfying the condition that for all $r>0$ and $\epsilon>0$ there exists $K_{r, \epsilon}>0$ such that  if $n>K_{r, \epsilon}$ one always has
	\begin{enumerate}
		\item $d_H(\Theta_n(\gamma_i\gamma_j), \Theta_n(\gamma_i)\Theta_n(\gamma_j))<\epsilon$ if $1\leq i, j\leq r$.
		\item $d_H(\Theta_n(\gamma_i), \id_{A_n})>\epsilon_i$ if $1\leq i\leq r$.
	\end{enumerate}
	Then $\Gamma$ is sofic.
\end{prop}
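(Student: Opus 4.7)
The plan is to promote the given ``compressed'' almost actions $\Theta_n$ to genuine $(F,\eta)$-injective almost actions in the sense of Definition~\ref{defn: sofic} by taking sufficiently many diagonal tensor copies of each $\Theta_n$ on the product space $A_n^k$. The obstacle the proposition needs to overcome is that hypothesis~(2) only guarantees a uniform positive lower bound $\epsilon_i$ on the displaced-points fraction, whereas Definition~\ref{defn: sofic}(2) demands this fraction exceed $1-\eta$. Tensor amplification closes this gap, since a tuple in $A_n^k$ is fixed by the diagonal action precisely when every coordinate is fixed individually, which raises the fixed-points fraction to the $k$-th power.

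Concretely, given $F\fin\Gamma$ with $e_\Gamma\in F$ and $\eta>0$, I would first choose $r\ge 1$ with $F\subset\{\gamma_0,\dots,\gamma_r\}$ and set $\delta:=\min\{\epsilon_i:\gamma_i\in F\setminus\{e_\Gamma\}\}>0$ (taking $\delta=1$ if this minimum is over the empty set). Then I would pick $k\in\N$ large enough that $(1-\delta)^k<\eta$, followed by $\epsilon>0$ small enough that $k\epsilon<\eta$, and finally take $n>K_{r,\epsilon}$. I would then define $\Psi\colon\Gamma\to\map(A_n^k)$ by
\[
\Psi(\gamma)(x_1,\dots,x_k):=\bigl(\Theta_n(\gamma)(x_1),\dots,\Theta_n(\gamma)(x_k)\bigr),
\]
for which $\Psi(e_\Gamma)=\id_{A_n^k}$ is immediate from $\Theta_n(e_\Gamma)=\id_{A_n}$.

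The two remaining conditions in Definition~\ref{defn: sofic} reduce to two routine computations. A coordinate-wise union bound gives $d_H(\Psi(\gamma_i\gamma_j),\Psi(\gamma_i)\Psi(\gamma_j))\le k\cdot d_H(\Theta_n(\gamma_i\gamma_j),\Theta_n(\gamma_i)\Theta_n(\gamma_j))<k\epsilon<\eta$ whenever $\gamma_i,\gamma_j\in F$. For displacement, the characterisation of fixed tuples yields $d_H(\Psi(\gamma_i),\id_{A_n^k})=1-(1-d_H(\Theta_n(\gamma_i),\id_{A_n}))^k>1-(1-\epsilon_i)^k\ge 1-(1-\delta)^k>1-\eta$ for every $\gamma_i\in F\setminus\{e_\Gamma\}$. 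This exhibits $\Psi$ as an $(F,\eta)$-injective almost action, so $\Gamma$ is sofic.

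The only delicate point worth flagging is the order of the quantifiers: $k$ must be chosen depending only on $\delta$ (equivalently, on $F$ and the data $\{\epsilon_i\}$), independently of $\epsilon$, so that afterwards one can freely shrink $\epsilon$ to absorb the linear blow-up $k\epsilon$ coming from the union bound, and only then invoke the hypothesis at $n>K_{r,\epsilon}$. Beyond this bookkeeping I do not anticipate any serious obstacle.
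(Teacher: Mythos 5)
Your proposal is correct and follows essentially the same route as the paper: both amplify the compressed maps $\Theta_n$ by a diagonal power $A_n^{k}$ chosen so that $(1-\epsilon_i)^k$ is small, then shrink $\epsilon$ afterwards and invoke the hypothesis at $n>K_{r,\epsilon}$, exactly the quantifier order you flag. The only cosmetic difference is that you bound the multiplicativity defect by the union bound $k\epsilon$ where the paper uses the exact identity $1-(1-\epsilon)^{k}$; both suffice.
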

\begin{proof}
	Let $\{\delta_r\in \R^+: r\in \N\}$ be a decreasing sequence of positive numbers converging to zero. For $M_r=\{e_\Gamma=\gamma_0, \gamma_1, \dots, \gamma_r\}$ and $\delta_r$, one can choose $l_r\in \N$ such that $\min_{i\leq r}\{1-(1-\epsilon_i)^{l_r}\}\geq 1-\delta_r$. Then for this $l_r$, choose $\eta_r>0$ such that $1-(1-\eta_r)^{l_r}<\delta_r$. Then, by the assumption, for any $n>K_{r, \eta_r}$, one has 
	\begin{enumerate}
		\item $d_H(\Theta_n(\gamma_i\gamma_j), \Theta_n(\gamma_i)\Theta_n(\gamma_j))<\eta_r$ if $1\leq i, j\leq r$.
		\item $d_H(\Theta_n(\gamma_i), \id_{A_n})>\epsilon_i$ if $1\leq i\leq r$.
	\end{enumerate}
	Then define $\theta_r: \Gamma\to\map(S_r)$ where $S_r=A_{n_r}^{l_r}$, for an $n_r>K_{r,\eta_r}$ by
	\[\theta_r(\gamma)(x_1,\dots, x_{l_r})=(\Theta_{n_r}(\gamma)(x_1), \dots, \Theta_{n_r}(\gamma)(x_{l_r})).\]
	This implies that, for $M_r$ and $\delta_r$ and all $1\leq i, j\leq r$, one has
	\begin{enumerate}
		\item $d_H(\theta_r(\gamma_i\gamma_j), \theta_r(\gamma_i)\theta_r(\gamma_j))=1-(1-d_H(\Theta_{n_r}(\gamma_i\gamma_j), \Theta_{n_r}(\gamma_i)\Theta_{n_r}(\gamma_j)))^{l_r}<\delta_r$.
		\item $d_H(\theta(\gamma_i), \id_{S_r} )=1-(1-d_H(\Theta_{n_r}(\gamma_i), \id_{A_{n_r}}))^{l_r}>1-\delta_r.$
	\end{enumerate}
	This implies that $\CG=\{S_r: r\in \N\}$ is a sofic approximation for $\Gamma$.
\end{proof}

\begin{rmk}\label{rem: unique cocycle}
Let $\alpha: G\curvearrowright X$ be a continuous action on the Cantor set. Suppose free points for $\alpha$ are dense and let $U$ be an open set in $X$. Then, if $\gamma\in [[\alpha]]$ sastifies $\gamma|_U=g$ and $\gamma|_U=h$ for some $g, h\in G$, then $g=h$ holds necessarily. Indeed, because the free points are dense, one has $\gamma(x)=g\cdot x$ and $\gamma(x)=h\cdot x$ for a free point $x\in U$. This thus implies that $g=h$ holds.  Furthermore, if $x$ is a free point, then for any $\gamma\in [[\alpha]]$, there is a unique $g\in G$ such that $\gamma(x)=g\cdot x$. In the case that $\alpha$ is minimal, the action $\alpha$ has a free point if and only if $\alpha$ is topologically free.
\end{rmk}

Throughout this section, let $G$ always be an amenable infinite countable discrete group and $X$ the Cantor set and  $\alpha:G\curvearrowright X$ an action of $G$ on $X$. 
 Let $\gamma\in [[\alpha]]$, we denote by $\supp^o(\gamma)$ the \textit{open support} of $\gamma$, i.e., $\supp^o(\gamma)=\{x\in X: \gamma(x)\neq x\}$. We also define  the \textit{support} of $\gamma$ by $\supp(\gamma)=\overline{\supp^o(\gamma)}$. 
 
 Let $\{F_n: n\in \N\}$ be a F{\o}lner sequence of $G$. For any $A\subset X$  we denote the \textit{lower Banach density} of $A$ by $\barbelow{D}(A)=\lim_{n\to \infty}\inf_{x\in X}\frac{1}{|F_n|}\sum_{s\in F_n}1_A(sx)$. Write $M_G(X)$ for the set of all $G$-invariant probability Borel measures on $X$. Then, it can be verified directly that $\barbelow{D}(A)=\inf_{\mu\in M_G(X)}\mu(A)$ when $A$ is clopen. 

Using Proposition \ref{prop: compressed sofic representation}, it was shown in \cite[Corollary 7.6]{M} in the groupoid setting that the topological full groups of second countable minimal fiberwise amenable ample groupoid with a free unit is sofic. This implies that the group $[[\alpha]]$ is sofic as well when $\alpha$ has a free point.  To be self-contained, we include the proof here.

\begin{prop}\label{prop: soficity 1}
Let $\alpha:G\curvearrowright X$ be a minimal topologically free action. Then $[[\alpha]]$ is sofic.
\end{prop}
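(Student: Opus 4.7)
The plan is to apply the compressed sofic representation criterion of Proposition \ref{prop: compressed sofic representation} with $A_n := F_n$ for a F{\o}lner sequence $\{F_n\}$ of the amenable group $G$, defining each $\Theta_n:[[\alpha]]\to\map(F_n)$ by tracking how elements of $[[\alpha]]$ move points along a single generic orbit. Since $\alpha$ is minimal and topologically free, Remark \ref{rem: unique cocycle} supplies a free point $x\in X$; on its orbit every cocycle value $c(\gamma)(sx)\in G$ is unambiguously determined by $\gamma$. Fix such an $x$ and an enumeration $\{e_{[[\alpha]]}=\gamma_0,\gamma_1,\dots\}$ of $[[\alpha]]$.

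I would set
\[
\Theta_n(\gamma)(s)=\begin{cases} c(\gamma)(sx)\,s, & \text{if } c(\gamma)(sx)\,s\in F_n,\\ s, & \text{otherwise,}\end{cases}\qquad s\in F_n,
\]
so that automatically $\Theta_n(e)=\id_{F_n}$. Because each continuous cocycle $c(\gamma)$ has finite image, for any finite window $\gamma_1,\ldots,\gamma_r$ there is a finite $K_r\fin G$ containing all cocycle values of the $\gamma_i$, the $\gamma_j$, and the products $\gamma_i\gamma_j$; the F{\o}lner property then forces the set of \emph{good} $s\in F_n$ with $K_r\cdot s\subset F_n$ to have density tending to $1$. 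On the good set, the cocycle identity $c(\gamma_i\gamma_j)(sx)=c(\gamma_i)(\gamma_j(sx))\,c(\gamma_j)(sx)$ (unambiguous by freeness of $sx$) yields $\Theta_n(\gamma_i\gamma_j)(s)=\Theta_n(\gamma_i)\,\Theta_n(\gamma_j)(s)$, giving condition (1) of Proposition \ref{prop: compressed sofic representation}.

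For condition (2), fix $\gamma_i\neq e$ and let $A_i:=\supp^o(\gamma_i)$, a nonempty clopen set. By minimality, finitely many $G$-translates of $A_i$ cover $X$, so every $\mu\in M_G(X)$ has $\mu(A_i)>0$; weak$^*$-compactness of $M_G(X)$ together with the clopen-ness of $A_i$ then yields $\epsilon_i:=\barbelow{D}(A_i)=\inf_{\mu\in M_G(X)}\mu(A_i)>0$, a constant independent of $n$. By the identity $\barbelow{D}(A_i)=\lim_n \inf_{y\in X}\frac{1}{|F_n|}|\{s\in F_n:sy\in A_i\}|$ combined with the F{\o}lner estimate, the proportion of good $s\in F_n$ with $sx\in A_i$ is at least $\epsilon_i-o(1)$; for every such $s$, the freeness of $sx$ together with $\gamma_i(sx)\neq sx$ forces $c(\gamma_i)(sx)\neq e_G$, hence $\Theta_n(\gamma_i)(s)\neq s$. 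This furnishes the required constant $\epsilon_i>0$ and verifies (2), so Proposition \ref{prop: compressed sofic representation} finishes the argument. The main delicate point is the uniform positive lower bound $\epsilon_i$ on the orbital density of $\supp^o(\gamma_i)$: this is precisely what minimality buys us via the weak$^*$-compactness of $M_G(X)$, converting the topological non-triviality of $\gamma_i$ into a quantitative measure-theoretic statement that survives the F{\o}lner limit.
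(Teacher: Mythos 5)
Your argument is essentially the paper's own proof: fix a free point $x$, identify $F_n$ with $F_nx$, let $\Theta_n$ move points by partial translation along the orbit (landing in $\map(F_n)$ is fine, since Proposition \ref{prop: compressed sofic representation} does not require bijections), verify multiplicativity on the F{\o}lner-good set via the uniqueness of cocycle values at free points, get the non-triviality bound from $\inf_{\mu\in M_G(X)}\mu(A_i)>0$ by minimality, and conclude by Proposition \ref{prop: compressed sofic representation}. The one slip is the claim that $A_i=\supp^o(\gamma_i)$ is clopen: the open support is open but in general not closed (the fixed-point sets of the finitely many group elements locally defining $\gamma_i$ are closed but need not be open), and the identity $\barbelow{D}(A)=\inf_{\mu\in M_G(X)}\mu(A)$ you use is only available for clopen $A$; replacing $A_i$ by any nonempty clopen subset of $\supp^o(\gamma_i)$, exactly as the paper does, repairs this and leaves the rest of your argument unchanged.
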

\begin{proof}
	Enumerate $[[\alpha]]$ by $\{\id_X=\gamma_0, \gamma_1, \dots\}$. Let $\{F_n: n\in\N\}$ be a F{\o}lner sequence of $G$ and pick a free point $x\in X$ for $\alpha$. For any integer $i\geq 1$, choose a non-empty clopen set $A_i\subset \supp^o(\gamma_i)$. Now, the minimality of $\alpha$ implies that there is an $\epsilon_i>0$ such that $\inf_{\mu\in M_G(X)}\mu(A_i)\geq 3\epsilon_i$.  This also entails that \[\barbelow{D}(A_i)=\lim_{n\to \infty}\inf_{z\in X}(1/|F_n|)\sum_{s\in F_n}1_{A_i}(z)=\inf_{\mu\in M_G(X)}\mu(A_i)\geq 3\epsilon_i.\]
	
	Now for each $n\in \N$, define $\Theta_n: [[\alpha]]\to \sym(F_nx)$ as follows. For $\varphi\in [[\alpha]]$ define a map $\sigma_{\varphi, n}$ from $\varphi^{-1}(F_nx)\cap F_nx$ to $\varphi(F_nx)\cap F_nx$ by $\sigma_{\varphi, n}(z)=\varphi(z)$, which is well-defined and bijective. Moreover, we fix another arbitrary bijective map $\rho_{\varphi, n}: F_nx\setminus\varphi^{-1}(F_nx)\to F_nx\setminus \varphi(F_nx)$ and then we announce that the $\Theta_n(\varphi)$ is defined to be the combination of maps $\sigma_{\varphi, n}$ and $\rho_{\varphi, n}$, which belongs to $\sym(F_nx)$. Note that $\Theta_n(\id_X)=\id_{F_nx}$ by definition. Now it suffices to verify all $\Theta_n$ satisfy the assumptions of Proposition \ref{prop: compressed sofic representation}.
	
	Let $r\in \N, \epsilon>0$ and denote by $M=\{\gamma_0,\dots,\gamma_r\}$. First, choose a $0<\delta<\min\{\epsilon, \epsilon_i: 1\leq i\leq r\}$. Then there is a big enough number $K_{r, \epsilon}>0$ such that whenever $n>K_{r, \epsilon}$, the set $F_n$ is F{\o}lner enough such that \[|\bigcap_{\gamma\in M^2\cup M}\gamma^{-1}(F_nx)\cap F_nx|\geq (1-\delta)|F_nx|\] and
	\[\barbelow{D}_{F_n}(A_i)=\inf_{z\in X}(1/|F_n|)\sum_{s\in F_n}1_{A_i}(z)\geq \barbelow{D}(A_i)-\delta>3\epsilon_i-\delta>2\epsilon_i\] for any $i\leq r$.
	
	Now, let $1\leq i\leq r$, which means $\gamma_i\in M\setminus \{\id_X\}$. Then since $x$ is a free point, the inequality above implies that $|A_i\cap F_nx|/|F_nx|\geq \barbelow{D}_{F_n}(A_i)>2\epsilon_i$. Observe that whenever $z\in \gamma_i^{-1}(F_nx)\cap F_nx\cap A_i$, one has $\Theta_n(\gamma_i)(z)=\gamma_i(z)\neq z$ because $z\in A_i\subset \supp^o(\gamma_i)$. On the other hand, note that 
	\[|\gamma_i^{-1}(F_nx)\cap F_nx\cap A_i|>2\epsilon_i|F_nx|-\delta|F_nx|\geq \epsilon_i|F_nx|,\]
	which implies that $|\{z\in F_nx: \Theta_n(\gamma_i)(z)\neq z\}|>\epsilon_i|F_nx|$.
	
	Finally, let $0\leq i, j\leq r$, we write $\varphi=\gamma_i\in M$ and $\psi=\gamma_j\in M$. Note that for any $z\in \varphi^{-1}(F_nx)\cap \psi^{-1}(F_nx)\cap \psi^{-1}\varphi^{-1}(F_nx)\cap F_nx$, one always has $\theta_n(\varphi\psi)(z)=\varphi(\psi(z))$ by definition as well as $\Theta_n(\psi)(z)=\psi(z)\in F_nx\cap \varphi^{-1}(F_nx)$. This entales  $\Theta_n(\varphi)(\Theta_n(\psi)(z))=\varphi(\psi(z))$. Therefore, one has the set
	\[\varphi^{-1}(F_nx)\cap \psi^{-1}(F_nx)\cap \psi^{-1}\varphi^{-1}(F_nx)\cap F_nx\] is a subset of  $\{z\in F_nx: \Theta_n(\varphi)(\Theta_n(\psi)(z))=\Theta_n(\varphi\psi)(z)\}$, which implies 
	 \[|\{z\in F_nx: \Theta_n(\varphi)\Theta_n(\psi)(z)=\Theta_n(\varphi\psi)(z)\}|>(1-\delta)|F_nx|\geq (1-\epsilon)|F_nx|.\]
	Now, Proposition \ref{prop: compressed sofic representation} shows that $[[\alpha]]$ is sofic.
\end{proof}

An action $\alpha: G\curvearrowright X$ is called
\textit{equicontinuous} if for any $\delta>0$ there exists an $\epsilon>0$ such that $d(x, y)\leq \epsilon$ implies $d(gx, gy)<\delta$ for any $g\in G$. The action $\alpha$ is  called \textit{distal} if $\inf_{g\in G}d(gx,gy)>0$ whenever $x\neq y$ in $X$. All equicontinuous actions are distal but the converse is not true. Nevertheless, Auslander, Glasner and Weiss proved a celebrated result that for finitely generated acting group $G$, The equicontinuity is indeed equivalent to distality for actions. See \cite{A-G-W} and reference therein. 

\begin{lem}\label{lem: distal}
	Let $\alpha$ be a distal action of a countable discrete group $G$ on the Cantor set $X$ and $T\subset [[\alpha]]$ a finite symmetric set. For each $\varphi\in T$, choose a continuous orbit cocycle $c(\varphi):X \to \Gamma$. Write $S=\bigcup_{\varphi\in T}\operatorname{ran}(c(\varphi))\subset \Gamma$, where $\operatorname{ran}(c(\varphi))$ is the range of $c(\varphi)$ in $\Gamma$ and denote by  $H=\langle S\rangle$.
 Then there is a finite clopen partition $\CU$ of $X$  satisfying the following.
\begin{enumerate}[label=(\roman*)]
    \item For any $\gamma\in \langle T\rangle$ and any $U\in \CU$, there exists a $g\in H$ such that $\gamma|_U=g$.
    \item For any $\gamma\in \langle T\rangle$ with $\gamma\neq \id_X$, there is a $U\in \CU$ such that there exists $g\in H\setminus \{e_G\}$ with $\gamma|_U=g$.
    \end{enumerate}
\end{lem}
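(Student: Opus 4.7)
The plan is to upgrade $\alpha|_H$ from distal to equicontinuous via the Auslander-Glasner-Weiss theorem mentioned just above the lemma, and then to build $\CU$ as a finite $H$-invariant clopen refinement of the natural partition on which all cocycles $c(\varphi)$ are already locally constant. Note first that each $c(\varphi)$ is a continuous map from the compact set $X$ to the discrete group $G$, so its range is finite; therefore $S$ is finite, $H = \langle S \rangle$ is finitely generated, and $\alpha|_H$ is still distal (distality passes to subgroups). Since $H$ is finitely generated, Auslander-Glasner-Weiss then yields that $\alpha|_H$ is equicontinuous.

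Let $K$ denote the closure of the image of $H$ in $\operatorname{Homeo}(X)$ under the uniform topology. By equicontinuity combined with Arzela-Ascoli, $K$ is a compact topological group acting on $X$ by homeomorphisms, and the action $K\curvearrowright X$ remains equicontinuous. Let $\CV$ be the common refinement of the finite clopen partitions $\{c(\varphi)^{-1}(s) : s \in S\}$ as $\varphi$ ranges over $T$; every cocycle $c(\varphi)$ is then constant on each piece of $\CV$.

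The key step is to produce a finite $K$-invariant clopen partition $\CU$ refining $\CV$. Let $\delta > 0$ be the minimum pairwise distance between distinct pieces of $\CV$, which is positive by compactness. By equicontinuity of $K$, choose $\eps > 0$ such that $d(x,y) < \eps$ forces $d(kx, ky) < \delta$ for every $k \in K$. Pick a clopen partition $\CW$ of $X$ with pieces of diameter less than $\eps$, and declare $x \sim y$ iff for every $k \in K$ the points $kx$ and $ky$ lie in the same piece of $\CV$. The classes of $\sim$ form a $K$-invariant partition refining $\CV$ (the latter by taking $k = e$). The choice of $\eps$ forces each $W \in \CW$ to lie in a single class, so $\sim$ has only finitely many classes, each of which is a union of pieces of $\CW$ and is therefore clopen; let $\CU$ be the resulting partition.

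To verify (i), I proceed by induction on the word length of $\gamma \in \langle T \rangle$ with respect to $T$. The base case $\gamma \in T \cup \{\id_X\}$ is immediate because $\CU$ refines $\CV$. For the inductive step, if $\gamma = \varphi \gamma'$ with $\varphi \in T$ and $\gamma'|_U = g'$ for some $g' \in H$, then $K$-invariance of $\CU$ yields $g' \cdot U \in \CU$, and on $g' \cdot U$ the homeomorphism $\varphi$ agrees with some $g_1 \in S$, whence $\gamma|_U = g_1 g' \in H$. Statement (ii) then follows immediately: if $\gamma \neq \id_X$, pick $x_0 \in X$ with $\gamma(x_0) \neq x_0$ and let $U_0 \in \CU$ be the piece containing $x_0$; by (i), $\gamma|_{U_0} = g$ for some $g \in H$, and since $g \cdot x_0 = \gamma(x_0) \neq x_0$, necessarily $g \in H \setminus \{e_G\}$. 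The main technical hurdle is the $K$-invariant refinement step, which is where equicontinuity is decisively used; the remainder of the argument is bookkeeping.
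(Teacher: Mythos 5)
Your argument is correct, and it follows the paper's overall strategy — pass from distal to equicontinuous for the finitely generated group $H=\langle S\rangle$ via Auslander--Glasner--Weiss, build a finite clopen partition from an equicontinuity modulus, and induct on word length — but the construction of $\CU$ is genuinely different. The paper takes the partition $\CP$ on which the generators act by elements of $S$, lets $\delta$ be its Lebesgue number, picks $\eps$ from equicontinuity, and lets $\CU$ be any refinement of $\CP$ of mesh less than $\eps$; invariance is never established, and instead each inductive step re-uses equicontinuity: $\gamma'(U)=g_{\gamma'}(U)$ has diameter less than $\delta$, so it sits inside a single piece of $\CP$, where $\psi$ acts by an element of $S$. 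You instead manufacture an honestly $H$-invariant (indeed $K$-invariant) clopen refinement of $\CV$ by the relation $x\sim y$ iff all translates land in the same piece of $\CV$, after which the inductive step is immediate because $g'U$ is again a piece of $\CU$. Your route buys a cleaner induction and a canonical partition at the cost of verifying finiteness and clopenness of the classes (which you do correctly via the mesh-$\eps$ partition $\CW$); the paper's route is more elementary, needing only the Lebesgue number. One remark: the detour through the uniform closure $K$ and Arzel\`a--Ascoli is unnecessary and is the only place where you assert something nontrivial without proof (that $K$ is a compact topological group, which requires the standard isometric-metric argument for equicontinuous actions). Since the verification only uses translates by elements of $H$ and closure of the acting family under composition, you can define $\sim$ with $H$ in place of $K$ and the whole proof goes through verbatim, with equicontinuity of $\alpha|_H$ supplying the modulus.
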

\begin{proof}
Note that $H=\langle S\rangle$ is a finitely generated subgroup of $G$. Now restricted the action $\alpha$ to the action $\alpha':H\curvearrowright X$, which is still distal and thus actually equicontinuous by \cite[Corollary 1.9]{A-G-W}.
	
With respect to $S$, there is a finite clopen partition  $\CP$ of $X$ such that for any $V\in \CP$ and $\varphi\in T$ there is a $g_\varphi\in S$ such that $\varphi=g_\varphi$ on $V$. Let $\delta>0$ be the Lebesgue number for the partition $\CP$. Since $\alpha'$ is equicontinuous, there is an $\epsilon>0$ such that  $d(x, y)\leq \epsilon$ implies $d(gx, gy)<\delta$ for any $g\in H$ and $x, y\in X$. Now choose another clopen partition $\CU$ of $X$ finer than $\CP$ such that each  $U\in \CU$ satisfies $\diam(U)<\epsilon$. Let $\gamma\in \Gamma= \langle T\rangle$. Write $\gamma=\psi_1\dots\psi_m$ in which each $\psi_j=\varphi_i$ for some $1\leq i\leq n$. 
	
	By induction on the length of $\gamma$, if $\gamma=\psi_1$, then because $\CU$ is finer than $\CP$ and $\psi_1\in T$, the definition of $\CP$ implies that for any $U\in \CU$ there is a $g_{\gamma}\in S\subset H$ such that $\gamma=g_{\gamma}$ on $U$. Now suppose this holds for all elements in $\Gamma$ with length $n$. Let $\gamma\in \Gamma$ be of length $n+1$, i.e. $\gamma=\psi\gamma'$ in which $\gamma'$ is of length $n$. Let $U\in \CU$. By induction, there is a $g_{\gamma'}\in H$ such that $\gamma'=g_{\gamma'}$ on $U$ and thus $\gamma'(U)$ is of diameter less than $\delta$ by our choice of $\epsilon$. This implies that $\gamma'(U)$ is contained in some member of $\CP$ and thus there is another $g_{\psi}\in S\subset H$ such that $\psi=g_{\psi}$ on $\gamma'(U)=g_{\gamma'}(U)$.  Now denote by $g_\gamma=g_{\psi}g_{\gamma'}\in H$ and one has $\gamma=g_\gamma$ on $U$. This establishes (i).

 For the second, let $\gamma\in \langle T\rangle\setminus \{\id_X\}$. Then, there exists an $x\in X$ such that $\gamma(x)\neq x$. Take the finite clopen partition $\CU$ for $X$ satisfying (i). Then take $U\in \CU$ such that $x\in U$ and by (i), there exists a $g\in H$ such that $\gamma|_U=g$, which implies that $\gamma(x)=g\cdot x\neq x$. This implies that $g\neq e_G$. This establishes (ii).
\end{proof}

\begin{prop}\label{prop: sofic 2}
Let $\alpha: G\curvearrowright X$ be a distal action of an amenable group $G$ on the Cantor set $X$ such that free points are dense in $X$.  Then, any finitely generated subgroup of $[[\alpha]]$ is sofic. Therefore, $[[\alpha]]$ is sofic.
\end{prop}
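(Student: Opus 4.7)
The plan is to apply Proposition \ref{prop: compressed sofic representation} to every finitely generated subgroup of $[[\alpha]]$, since soficity is a local property. Fix $\Gamma = \langle T \rangle$ with $T \subset [[\alpha]]$ finite and symmetric, and invoke Lemma \ref{lem: distal} to produce a finite clopen partition $\CU$ of $X$ and a finitely generated (hence amenable) subgroup $H \leq G$ such that, for every $\gamma \in \Gamma$ and $U \in \CU$, there exists $g_{\gamma, U} \in H$ with $\gamma|_U = g_{\gamma, U}$. The restricted action $H \curvearrowright X$ remains distal, so by \cite[Corollary 1.9]{A-G-W} it is equicontinuous, and each $H$-orbit closure is minimal and uniquely ergodic. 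Fix a Følner sequence $\{F_n\}$ of $H$ and, using density of free points, pick a free point $x_U \in U$ for every $U \in \CU$.

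Set $A_n := \CU \times F_n$ and define $\Theta_n(\gamma) \in \sym(A_n)$ as follows: for $(U, s)$, let $V_s \in \CU$ be the unique element containing $s x_U$; if $g_{\gamma, V_s} s \in F_n$, set $\Theta_n(\gamma)(U, s) := (U, g_{\gamma, V_s} s)$, and extend arbitrarily to a bijection of $A_n$. Freeness of $x_U$ makes this partial map injective, since $g_{\gamma, V_{s_1}} s_1 = g_{\gamma, V_{s_2}} s_2$ forces $\gamma(s_1 x_U) = \gamma(s_2 x_U)$ and hence $s_1 = s_2$; moreover $\Theta_n(\id_X) = \id_{A_n}$.

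To check approximate multiplicativity, note that at the free point $s x_U \in V_s$, uniqueness of the cocycle at free points (Remark \ref{rem: unique cocycle}) yields $g_{\gamma_1 \gamma_2, V_s} = g_{\gamma_1, V'} g_{\gamma_2, V_s}$ with $V' \in \CU$ containing $\gamma_2(s x_U)$. Thus $\Theta_n(\gamma_1 \gamma_2)(U, s) = \Theta_n(\gamma_1) \Theta_n(\gamma_2)(U, s)$ whenever the relevant intermediate and final translates lie in $F_n$, with the exceptional set of size at most $|\CU| \cdot \sum_h |F_n \setminus h^{-1} F_n|$ for a finite collection of $h \in H$, which is $o(|F_n|)$ by the Følner property. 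For the separation condition, given $\gamma \neq \id_X$, Lemma \ref{lem: distal}(ii) provides $U \in \CU$ with $g := g_{\gamma, U} \neq e_G$; the orbit closure $Y_U := \overline{H x_U}$ is minimal equicontinuous, hence uniquely ergodic, and its invariant measure $\mu_U$ assigns positive mass to the nonempty clopen set $U \cap Y_U$. The ergodic theorem on uniquely ergodic systems gives $|\{s \in F_n : s x_U \in U\}|/|F_n| \to \mu_U(U \cap Y_U) > 0$, and for each such $s$ with $g s \in F_n$, $\Theta_n(\gamma)(U, s) = (U, g s) \neq (U, s)$, yielding a uniform lower bound $\epsilon_\gamma > 0$ on $d_H(\Theta_n(\gamma), \id_{A_n})$ for all large $n$. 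Proposition \ref{prop: compressed sofic representation} then produces soficity of $\Gamma$, and hence of $[[\alpha]]$.

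The principal technical issue will be the cocycle identity $g_{\gamma_1 \gamma_2, V_s} = g_{\gamma_1, V'} g_{\gamma_2, V_s}$: since $\CU$ is a priori not fine enough to resolve $\gamma_2(V_s)$ into a single member of $\CU$, the factorization must be read pointwise at each free representative $s x_U$, where the uniqueness of the cocycle guaranteed by Remark \ref{rem: unique cocycle} pins down $V'$ and forces the identity.
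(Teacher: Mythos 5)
Your proposal is correct and follows essentially the same route as the paper's proof: reduce via Lemma \ref{lem: distal} to a finite clopen family whose cocycle values lie in the finitely generated subgroup $H$, pick a free point in each clopen piece, build almost-permutations of F{\o}lner translates from the cocycle, and verify the hypotheses of Proposition \ref{prop: compressed sofic representation} (multiplicativity from the F{\o}lner property, separation from the positive frequency of orbit visits to the distinguished clopen set). The only minor differences are that you index the finite sets by $\CU\times F_n$ with $F_n$ F{\o}lner in $H$ rather than by the orbit sets $\bigsqcup_i F_nx_i$ with $F_n$ F{\o}lner in $G$, and you justify the separation bound by unique ergodicity of the minimal equicontinuous $H$-orbit closures instead of the paper's lower-Banach-density estimate on the minimal $G$-orbit closures; via the orbit bijections at free points these are the same construction.
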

\begin{proof}
Let $T$ be a finite symmetric set of $[[\alpha]]$ and $\Gamma=\langle T \rangle$ subgroup of $[[\alpha]]$ generated by $T$. Then Lemma \ref{lem: distal}(ii) implies that there is a finite family $\CU=\{U_1, \dots, U_k\}$ of non-empty clopen sets such that for any $\gamma\in \Gamma$ with $\gamma\neq \id_X$ there is a $U\in \CU$ such that there exists a $g_{\gamma}\neq e_G$ such that $\gamma|_U=g_{\gamma}$.
For each $i\leq k$, define 
\[\CC_i=\{\gamma\in \Gamma: \text{there exists }g\neq e_G \text{ such that } \gamma|_{U_i}=g\}.\]
Then one has $\Gamma\setminus\{\id_X\}=\bigcup_{i\leq k}\CC_i$.
Now because free points for $\alpha$ are dense, for any $U_i\in \CU$, choose a free point $x_i\in U_i$. Since $\alpha$ is distal, \cite[Proposition 7.27]{K-L} implies each of $x_i$ is almost periodic in the sense that $X_i=\overline{Gx_i}$ is a minimal set for $\alpha$. Then we work in the restriction $\alpha: G\curvearrowright X_i$ and consider $U_i\cap X_i$ in $X_i$. First, for each $i\leq k$ there is an $\epsilon_i>0$ such that $\barbelow{D}(U_i\cap X_i)\geq 3\epsilon_i$ in $X_i$. Define $\epsilon_0=\min\{\epsilon_i: 1\leq i\leq k\}>0$.

Then enumerate $\Gamma=\{\id_X=\gamma_0, \gamma_1,\dots,\}$. Now for each $n\in \N$ and $i\leq k$, define $\Theta^i_n: [[\alpha]]\to \sym(F_nx_i)$ as follows. For $\varphi\in \Gamma$ define a map $\sigma_{\varphi, n}$ from $\varphi^{-1}(F_nx_i)\cap F_nx_i$ to $\varphi(F_nx_i)\cap F_nx_i$ by $\sigma^i_{\varphi, n}(z)=\varphi(z)$, which is well-defined and bijective. Moreover, we fix another arbitrary bijective map $\rho^i_{\varphi, n}: F_nx\setminus\varphi^{-1}(F_nx)\to F_nx\setminus \varphi(F_nx)$ and then we announce that the $\Theta^i_n(\varphi)$ is defined to be the combination of maps $\sigma^i_{\varphi, n}$ and $\rho^i_{\varphi, n}$, which belongs to $\sym(F_nx_i)$. Note that $\Theta^i_n(\id_X)=\id_{F_nx_i}$ by definition. Now define $A_n=\bigsqcup_{i\leq k}F_nx_i$, which denotes the disjoint union of all $F_nx_i$ (sets $F_nx_i$ may have intersections) and define $\Theta_n$ to be the combination of all $\theta^i_n$ on $F_nx_i$. This defines a map $\Theta_n: \Gamma\to \sym(A_n)$, which satisfies $\Theta_n(\id_{X})=\id_{A_n}$. Now it suffices to verify all $\Theta_n$ satisfy the assumptions of Proposition \ref{prop: compressed sofic representation}.

Let $r\in \N, \epsilon>0$ and denote by $M=\{\gamma_1,\dots,\gamma_r\}$. We proceed as in Proposition \ref{prop: soficity 1}. First, choose a $0<\delta<\min\{\epsilon, \epsilon_0\}$. Then, there is a big enough number $K_{r, \epsilon}>0$ such that whenever $n>K_{r, \epsilon}$, the set $F_n$ is F{\o}lner enough such that \[|\bigcap_{\gamma\in M^2\cup M}\gamma^{-1}(F_nx_j)\cap F_nx_j|\geq (1-\delta)|F_nx_j|\] for any $j\leq k$. Moreover, one may
enlarge $K_{r, \epsilon}$ if necessary so that whenever $n>K_{r, \epsilon}$,  one also has
$|U_j\cap F_nx_j|> 2\epsilon_0|F_nx_j|$ for all $j\leq k$ by the fact that $\barbelow{D}(U_j\cap X_j)\geq 3\epsilon_j\geq 3\epsilon_0$.
Then, for any $j\leq k$ and $\gamma\in M\cap \CC_j$, one has \[\Theta^j_n(\gamma)(z)=\gamma(z)\neq z\] for any $z\in \gamma^{-1}(F_nx_j)\cap F_nx_j\cap U_j$, because all such $z$ are free points for $\alpha$ and $\gamma|_{U_j}=g_\gamma$ for some $g_\gamma\neq e_G$. On the other hand, note that
\[|\gamma^{-1}(F_nx_j)\cap F_nx_j\cap U_j|>2\epsilon_0|F_nx_j|-\delta|F_nx_j|\geq \epsilon_0|F_nx_j|,\]
which implies that $|\{z\in F_nx_j: \Theta^j_n(\gamma)(z)\neq z\}|>\epsilon_0|F_nx_j|=(\epsilon_0/k)|A_n|$. Therefore, for any $\gamma\in M$, since $\gamma\in \CC_j$ for some $j\leq k$, one actually has
\[|\{z\in A_n: \Theta_n(\gamma)(z)\neq z\}|\geq |\{z\in F_nx_j: \Theta^j_n(\gamma)(z)\neq z\}|\geq\epsilon_0/k)|A_n|.\]

	Finally, let $0\leq i, j\leq r$, we write $\varphi=\gamma_i\in M$ and $\psi=\gamma_j\in M$ for simplicity. Note that for any $l\leq k$ and $z\in \varphi^{-1}(F_nx_l)\cap \psi^{-1}(F_nx_l)\cap \psi^{-1}\varphi^{-1}(F_nx_l)\cap F_nx_l$, one always has $\Theta^l_n(\varphi\psi)(z)=\varphi(\psi(z))$ by definition as well as $\Theta^l_n(\psi)(z)=\psi(z)\in F_nx_l\cap \varphi^{-1}(F_nx_l)$. This implies  $\Theta^l_n(\varphi)(\Theta^l_n(\psi)(z))=\varphi(\psi(z))$. Therefore, the set 
$\varphi^{-1}(F_nx_l)\cap \psi^{-1}(F_nx_l)\cap \psi^{-1}\varphi^{-1}(F_nx_l)\cap F_nx_l$ is a subset of the set \[\{z\in F_nx_l: \Theta^l_n(\varphi)\Theta^l_n(\psi)(z)=\Theta^l_n(\varphi\psi)(z)\}.\]
and thus one has  \[|\{z\in F_nx_l: \Theta^l_n(\varphi)(\Theta^l_n(\psi)(z))=\Theta^l_n(\varphi\psi)(z)\}|>(1-\delta)|F_nx_l|\geq (1-\epsilon)|F_nx_l|.\]  
This entails that
\[|\{z\in A_n: \Theta_n(\varphi)(\Theta_n(\psi)(z))=\Theta_n(\varphi\psi)(z) \}|\geq (1-\epsilon)|A_n|\]
by the definition of $A_n$.  Proposition \ref{prop: compressed sofic representation} then shows that $\Gamma$ is sofic.
\end{proof}

\begin{rmk}\label{rem: why not Cortez}
It is important to note that while the reduction of distal actions to equicontinuous actions can be achieved when considering finitely generated subgroups $H$ of $[[\alpha]]$ (as described in Lemma \ref{lem: distal}), the method proposed by Cortez and Medynets in \cite{C-M} cannot be directly applied to establish the amenability of $H$ and $[[\alpha]]$. This is mainly because we do not require freeness for the action, which is necessary for their purpose.
\end{rmk}

We end this section with the following elementary result.

\begin{prop}
	Let $\alpha: G\curvearrowright X$ be an action of a locally finite group $G$ on the Cantor set $X$. Then $[[\alpha]]$ is locally finite.
\end{prop}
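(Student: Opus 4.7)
The plan is to show that any finitely generated subgroup $\Gamma = \langle \varphi_1, \ldots, \varphi_n \rangle$ of $[[\alpha]]$ is finite by exhibiting an injection into a finite set of the form $H^{\CU}$, where $H$ is a finite subgroup of $G$ and $\CU$ is a finite clopen partition of $X$. First I would use the definition of the topological full group to extract, for each generator $\varphi_i$, a finite clopen partition of $X$ together with finitely many group elements of $G$ implementing $\varphi_i$ on each piece. Collecting all these group elements into a finite symmetric set $S \subset G$ and taking the common refinement of all these partitions yields a single finite clopen partition $\CP$ such that each $\varphi_i$ agrees with some element of $S$ on each piece of $\CP$. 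Since $G$ is locally finite, $H := \langle S \rangle$ is a finite subgroup of $G$.

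Next I would replace $\CP$ by the common refinement $\CU$ of the finite family $\{h^{-1}(\CP) : h \in H\}$, which is again a finite clopen partition. The key property of this refined partition is that $H$ now \emph{permutes} the pieces of $\CU$: for any $h \in H$ and $U \in \CU$, the image $h(U)$ is itself a single piece of $\CU$. This is the property that will make the inductive step below close.

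An induction on word length then shows that for every $\gamma \in \Gamma$ and every $U \in \CU$, there exists some $h_{\gamma, U} \in H$ with $\gamma|_U = h_{\gamma, U}|_U$. The base case is the construction of $\CP$, since $\CU$ refines $\CP$ and $S \subset H$. For the inductive step, if $\gamma = \varphi_i \gamma'$ and $\gamma'|_U = h'|_U$ for some $h' \in H$, then $\gamma'(U) = h'(U)$ is a single piece $U' \in \CU$ by the permutation property, and $\varphi_i|_{U'} = h''|_{U'}$ for some $h'' \in H$; hence $\gamma|_U$ agrees with $(h'' h')|_U \in H$.

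Fixing for each $\gamma \in \Gamma$ and $U \in \CU$ such a choice $h_{\gamma, U}$, the map $\gamma \mapsto (h_{\gamma, U})_{U \in \CU} \in H^{\CU}$ is injective: whenever $\gamma_1 \neq \gamma_2$ in $\Gamma$ there is $x \in X$ with $\gamma_1(x) \neq \gamma_2(x)$, and the piece $U \in \CU$ containing $x$ then forces $h_{\gamma_1, U} \cdot x \neq h_{\gamma_2, U} \cdot x$, so $h_{\gamma_1, U} \neq h_{\gamma_2, U}$. Hence $|\Gamma| \leq |H|^{|\CU|} < \infty$. The only subtlety, which is not a real obstacle, is that this map is not in general a homomorphism, because cocycles need not be unique in the absence of a density-of-free-points hypothesis; but injectivity, proved pointwise as above, is all that is needed.
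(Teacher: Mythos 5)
Your proof is correct, and its overall skeleton matches the paper's: extract cocycles for the generators, collect their values into a finite set $S\subset G$, use local finiteness to get a finite $H=\langle S\rangle$, produce a finite clopen partition on whose pieces every element of $\Gamma$ is implemented by an element of $H$, and conclude $|\Gamma|\leq |H|^{|\CU|}$. Where you genuinely differ is in how that partition is produced. The paper restricts $\alpha$ to the finite group $H$, notes the restriction is equicontinuous, and reruns the metric argument of Lemma \ref{lem: distal}: refine $\CP$ to a partition of small diameter relative to the Lebesgue number of $\CP$, so that images of pieces under elements of $H$ land inside single members of $\CP$, then induct on word length. You avoid metrics entirely: since $H$ is finite you may form the common refinement $\CU=\bigvee_{h\in H}h^{-1}(\CP)$, and the computation $g\bigl(\bigcap_{h\in H}h^{-1}(P_h)\bigr)=\bigcap_{h'\in H}(h')^{-1}(P_{h'g})$ indeed shows $H$ permutes the pieces of $\CU$ (for the induction it would even suffice that $h'(U)$ lies in a single member of $\CP$, immediate because $\CU$ refines $(h')^{-1}(\CP)$). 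This is more elementary and purely combinatorial, but it uses the finiteness of $H$ essentially, so unlike Lemma \ref{lem: distal} it would not extend to the distal/equicontinuous setting with infinite $H$; for the present proposition both routes work equally well. Two small points: for the word-length induction you should take the generating set of $\Gamma$ symmetric (a harmless normalization, which the paper makes by assuming $T$ symmetric), and your closing observation is apt — the paper's terse final sentence is exactly the injection $\gamma\mapsto(h_{\gamma,U})_{U\in\CU}$ you spell out, which requires no uniqueness of cocycles, only that distinct homeomorphisms differ at some point.
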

\begin{proof}
	Let $T$ be a finite symmetric subset of $[[\alpha]]$. For each $\varphi\in T$, choose a continuous orbit cocycle $c(\varphi):X \to \Gamma$. Then define a finite set $S=\bigcup_{\varphi\in T}\operatorname{ran}(c(\varphi))\subset \Gamma$, where $\operatorname{ran}(c(\varphi))$ is the range of $c(\varphi)$ in $\Gamma$. 
	Let $H=\langle S\rangle$, which is a finitely generated subgroup of $G$ and thus a finite group. Now restricted the action $\alpha$ to the action $\alpha':H\curvearrowright X$, which is equicontinuous.
	
	Then the exact argument in Proposition \ref{lem: distal} shows that there is a clopen partition $\CO=\{O_1, \dots,  O_k\}$ of $X$ such that for any $\gamma\in \Gamma=\langle T\rangle$ and $O_i\in \CO$ there is an $g_{\gamma, i}\in H$ such that $\gamma|_{O_i}=g_{\gamma, i}$. This implies that $\Gamma$ is finite because $H$ is finite. 
\end{proof}

	\section{Amenability of topological full groups}\label{sec: 3}
	In this section, we mainly address Theorem \ref{thm: main 1}. Let $G$ be a graph. We write $V(G)$ and $E(G)$ for the set of vertices of $G$ and edges in $G$, respectively.
	
	\subsection{Hyperfiniteness of graph sequences}
	The following definition was introduced in \cite{El1}. 
	
	\begin{defn}\label{defn: hyperfinite}
		A graph sequence $\CG=\{G_n: n\in\N\}$ is said to be \textit{hyperfinite} if for any $\epsilon>0$, there exists $K_\epsilon>0$ and a sequence of partitions of the vertex sets $V(G_n)=\bigsqcup_{i=1}^{k_n}A^n_i$ such that
		\begin{enumerate}[label=(\roman*)]
			\item $|A^n_i|\leq K_\epsilon$ for any $n\geq 1$ and $1\leq i\leq k_n$.
			\item If $E_n^{\epsilon}$ is the set of edges $(x, y)\in E(G_n)$ such that $x\in A^n_i$, $y\in A^n_j$ for $i\neq j$ then 
			$\limsup_{n\to \infty}(|E^{\epsilon}_n|/|V(G_n)|)< \epsilon$.
		\end{enumerate}
	\end{defn}
	
	\begin{rmk}\label{rmk: hyperfinite-connected component}
		\begin{enumerate}
			\item When we  indicate the relation between $E^\epsilon_n$ and the corresponding partition $\CA_n=\{A^n_i: 1\leq i\leq k_n\}$, we also denote $E(\CG)^\CA_n$ for this set. We also write $\partial A^n_i$ for all edges in $E(\CG)^\CA_n$ with exactly one end in $A^n_i$.
			\item We remark that one may assume that all $A^n_i$ for each $G_n$ are connected. To do this, simply further partition  $A^n_i=\bigsqcup_{j\in J_{i, n}}C^{i, j}_n$ into the union of all its connected components $C^{i, j}_n$, where $j\in J_{i, n}$. Then use the partition $\CC_n=\{C^{i, j}_n: 1 \leq i\leq k_n, j\in J_{i, n}\}$ instead of $\CA_n$. Note that this process does not add any edge into the original set $E^{\epsilon}_n$, i.e., $E(\CG)^\CA_n=\E(\CG)^\CC_n$. 
			\item We also remark that the condition ``$\limsup_{n\to \infty}(|E^{\epsilon}_n|/|V(G_n)|)< \epsilon$'' in Definition \ref{defn: hyperfinite}(ii) should be weaken to be
			``$\liminf_{n\to \infty}(|E^{\epsilon}_n|/|V(G_n)|)< \epsilon$'' in Elek's original definition in \cite{El1}. However, the current one works better in our setting simplifying many calculations and we can establish this stronger version of hyperfiniteness for the graph sequences in this paper. 
		\end{enumerate}
	\end{rmk}

	We now investigate several basic properties of hyperfiniteness for graph sequences.
	
	\begin{prop}\label{prop: hyperfinite subsequence}
		Let $\CG=\{G_n: n\in \N\}$ be a hyperfinite graph sequence. Suppose $\CG'=\{G'_n: n\in \N\}$ is a graph sequence such that $V(G'_n)=V(G_n)$ and $E(G'_n)\subset E(G_n)$. Then $\CG'$ is hyperfinite as well.
	\end{prop}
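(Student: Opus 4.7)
The plan is to show that the very same partitions witnessing hyperfiniteness for $\CG$ will work for $\CG'$, once we observe that removing edges can only shrink the set of ``crossing'' edges.

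Fix $\epsilon > 0$. Since $\CG$ is hyperfinite, there exist $K_\epsilon > 0$ and partitions $V(G_n) = \bigsqcup_{i=1}^{k_n} A^n_i$ satisfying $|A^n_i| \leq K_\epsilon$ and $\limsup_{n\to\infty} |E(\CG)^{\CA}_n|/|V(G_n)| < \epsilon$. Since $V(G'_n) = V(G_n)$, the very same collection $\CA_n = \{A^n_i : 1 \leq i \leq k_n\}$ is a partition of $V(G'_n)$ with the same size bound. I would then define $E(\CG')^{\CA}_n$ to be the set of edges of $G'_n$ with endpoints in distinct parts of $\CA_n$, and observe that because $E(G'_n) \subset E(G_n)$, one has the inclusion $E(\CG')^{\CA}_n \subset E(\CG)^{\CA}_n$. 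Hence
\[
\limsup_{n\to\infty} \frac{|E(\CG')^{\CA}_n|}{|V(G'_n)|} \leq \limsup_{n\to\infty} \frac{|E(\CG)^{\CA}_n|}{|V(G_n)|} < \epsilon,
\]
which gives hyperfiniteness of $\CG'$ with the same constant $K_\epsilon$.

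There is essentially no obstacle here; this proposition is a monotonicity observation, and the only thing to be careful about is that the definition of ``crossing edge'' is relative to a graph, so one must verify that the crossing edges with respect to $G'_n$ form a subset of the crossing edges with respect to $G_n$, which is immediate from $E(G'_n) \subset E(G_n)$. No modification of the partitions (for instance along the lines of Remark \ref{rmk: hyperfinite-connected component}(2) on connected components) is needed for this argument.
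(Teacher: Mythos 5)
Your proposal is correct and follows essentially the same route as the paper: reuse the partitions witnessing hyperfiniteness of $\CG$, note $E(\CG')^{\CA}_n\subset E(\CG)^{\CA}_n$ since $E(G'_n)\subset E(G_n)$, and conclude with the same bound on the limsup. Nothing further is needed.
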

	\begin{proof}
		Since $\CG$ is hyperfinite, for any $\epsilon>0$, there is a partition $\CA_n$ of $V(G_n)$ for each $n$ such that  $\limsup_{n\to \infty}(|E(\CG)^\CA_n|/|V(G_n)|)<\epsilon$. Now, since $V(G_n)=V(G'_n)$, one has $\CA_n$ is also a partition of $V(G'_n)$. Moreover, the assumption on edges actually implies $E(\CG')^\CA_n\subset E(\CG)^\CA_n$, and thus we are done. 
	\end{proof}

Let $\CG_i=\{G^i_n: n\in \N\}$ for $1\leq i\leq k$ be a finite family of graph sequences. We define a new graph sequence $\CG=\{G_n: n\in \N\}$ as a \textit{disjoint union} of graph sequences $\CG_i$ for $1\leq i\leq k$ by setting $V(G_n)=\bigsqcup_{1\leq i\leq k} V(G^i_n)$ (even $G^i_n$ may have intersections) and $E(G_n)=\bigsqcup_{1\leq i\leq k} E(G^i_n)$. In this case, we also write $\CG=\bigsqcup_{1\leq i\leq k}\CG_i$. The following proposition is a simple observation.

\begin{prop}\label{prop: disjoint union graph sequence}
Let $\CG_i=\{G^i_n: n\in \N\}$ for $1\leq i\leq k$ be hyperfinite graph sequences such that $V(G^i_n)$ are same for all $1\leq i\leq k$. Then their disjoint union $\CG=\bigsqcup_{1\leq i\leq k}\CG_i$ is also hyperfinite.
\end{prop}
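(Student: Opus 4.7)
The plan is to build a partition of $V(G_n)$ in the disjoint union by placing, on each of the $k$ pairwise disjoint copies of the common vertex set, a partition produced by the hyperfiniteness of the corresponding $\CG_i$; because edges of distinct $E(G^i_n)$ live in distinct copies, the boundary count for $\CG$ will then decouple as a sum over $i$.

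Fix $\epsilon > 0$. I would first apply Definition \ref{defn: hyperfinite} to each $\CG_i$ with parameter $\epsilon$, obtaining a constant $K^i_\epsilon$ and, for every $n$, a partition $\CA^i_n = \{A^{i,n}_j : 1 \leq j \leq k^i_n\}$ of $V(G^i_n)$ with $|A^{i,n}_j| \leq K^i_\epsilon$ and $\limsup_n |E(\CG_i)^{\CA^i}_n|/|V(G^i_n)| < \epsilon$. Using the paper's definition of disjoint union, $V(G_n) = \bigsqcup_{i=1}^k V(G^i_n)$ consists of $k$ labelled pairwise disjoint copies of the common vertex set (disjoint even though the $V(G^i_n)$ coincide as abstract sets), and $E(G_n) = \bigsqcup_{i=1}^k E(G^i_n)$ has each $E(G^i_n)$ embedded inside the $i$-th copy. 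I then define $\CA_n$ to be the partition of $V(G_n)$ whose blocks are the blocks $A^{i,n}_j$ realized inside the $i$-th copy, as $i$ and $j$ range. Every block then has size at most $K_\epsilon := \max_{1 \leq i \leq k} K^i_\epsilon$, verifying condition (i) of Definition \ref{defn: hyperfinite}.

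For condition (ii), the key structural point is that each edge of $E(G^i_n) \subseteq E(G_n)$ has both endpoints inside the $i$-th copy of the vertex set, so such an edge is a $\CA_n$-boundary edge of $\CG$ if and only if its endpoints lie in different blocks of $\CA^i_n$ in $\CG_i$. Consequently $|E(\CG)^{\CA}_n| = \sum_{i=1}^k |E(\CG_i)^{\CA^i}_n|$, and dividing by $|V(G_n)| = \sum_{i=1}^k |V(G^i_n)|$ writes the ratio $|E(\CG)^{\CA}_n|/|V(G_n)|$ as a convex combination of the individual ratios $|E(\CG_i)^{\CA^i}_n|/|V(G^i_n)|$. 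Each of those summands has $\limsup$ strictly less than $\epsilon$, hence so does the convex combination, establishing (ii).

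The argument is essentially formal once the disjoint-union structure is unwound, so I do not foresee any real obstacle; the only point that requires care is to correctly identify each edge with its specific copy of the vertex set, so that the boundary counts for $\CG$ genuinely decouple across $i$ and do not acquire cross-terms of the form ``an $E(G^i_n)$-edge crossing a $\CA^j_n$-block for $j \neq i$''—a phenomenon that is ruled out precisely by the disjointness of the copies.
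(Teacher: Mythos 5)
Your proposal is correct and follows essentially the same route as the paper's proof: take the union of the partitions from each copy, observe that the boundary edge sets decouple as $|E(\CG)^{\CA}_n|=\sum_{i=1}^k|E(\CG_i)^{\CA^i}_n|$, and bound the resulting ratio (which, since the $V(G^i_n)$ coincide, is exactly the average $\frac{1}{k}\sum_i |E(\CG_i)^{\CA^i}_n|/|V(G^i_n)|$) using subadditivity of $\limsup$. The paper's argument is identical in substance, merely stated with a single $K_\epsilon$ in place of your $\max_i K^i_\epsilon$.
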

\begin{proof}
	Let $\epsilon>0$.  Since all $\CG_i$ are hyperfinite, there is an $K_\epsilon>0$ for which there is a decomposition $\CA^i_n$ for $V(G^i_n)$ such that $|A|<K_\epsilon$ for any $A\in \CA^i_n$ and $\limsup_{n\to \infty}(|E(\CG_i)^{\CA^i}|/|V(G^i_n)|)<\epsilon$. 
	
	Write $\CG=\{G_n: n\in \N\}$. Note that $\CA_n=\bigsqcup_{1\leq i\leq k}\CA^i_n$ form a  decomposition for $V(G_n)$ such that $|A|<K_\epsilon$ for any $A\in \CA_n$. In addition, observe that $E(\CG)^\CA_n=\bigsqcup_{1\leq i\leq k}E(\CG_i)^{\CA^i}_n$. Now one has 
	\[\frac{|E(\CG)^{\CA}_n|}{|V(G_n)|}=\sum_{i=1}^k\frac{|E(\CG_i)^{\CA^i}_n|}{k|V(G^i_n)|}\]
	and thus
	\[\limsup_{n\to \infty}\frac{|E(\CG)^\CA_n|}{|V(G_n)|}\leq \frac{1}{k}\sum_{i=1}^k\limsup_{n\to \infty}\frac{|E(\CG_i)^{\CA^i}_n|}{|V(G^i_n)|}<\epsilon.\]	
	Therefore, $\CG$ is hyperfinite.
	
\end{proof}
	
	For a directed graph $G$ and a vertex $x\in V(G)$, we write $B_1(x)$ for all edges with $x$ as its the starting or the ending vertex. 
	
	\begin{prop}\label{prop: shrinking and enlarge hyperfinite}
		Let $\CG=\{G_n: n\in \N\}$ be a hyperfinite graph sequence and $\{\epsilon_n\}$  a decreasing sequence converging to $0$. Let $\CG_1=\{G'_n: n\in\N\}$ and $\CG_2=\{G''_n: n\in\N\}$ be two other graph sequences with $G'_n\subset G_n\subset G''_n$ such that $(1-\epsilon_n)|V(G_n)|\leq |V(G'_n)|$ and $V(G''_n)\leq (1+\epsilon_n)|V(G_n)|$. Suppose all graphs $G_n, G'_n, G''_n$ involved are of uniformly bounded degree $d$. Then both $\CG_1$ and $\CG_2$ are hyperfinite as well.
	\end{prop}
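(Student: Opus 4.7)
The plan is to handle $\CG_1$ and $\CG_2$ separately, each time starting from a hyperfinite partition $\CA_n = \{A^n_i : 1 \leq i \leq k_n\}$ of $V(G_n)$ supplied by the hyperfiniteness of $\CG$, and then adapting it minimally---by restriction in the $\CG_1$ case and by adjoining singletons in the $\CG_2$ case---to produce a valid partition of the modified vertex set.

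For $\CG_1$, given $\epsilon>0$, I would take $\CA_n$ adapted to parameter $\epsilon/2$, so that $|A^n_i|\leq K_{\epsilon/2}$ and $\limsup_n |E(\CG)^{\CA}_n|/|V(G_n)| < \epsilon/2$, and then form $\CA'_n := \{A^n_i \cap V(G'_n) : A^n_i \cap V(G'_n) \neq \emptyset\}$. Part sizes are still bounded by $K_{\epsilon/2}$, and since $E(G'_n) \subset E(G_n)$, every boundary edge of $G'_n$ under $\CA'_n$ is automatically a boundary edge of $G_n$ under $\CA_n$, so $|E(\CG_1)^{\CA'}_n| \leq |E(\CG)^{\CA}_n|$. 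The hypothesis $|V(G'_n)| \geq (1-\epsilon_n)|V(G_n)|$ together with $\epsilon_n \to 0$ then yields $\limsup_n |E(\CG_1)^{\CA'}_n|/|V(G'_n)| \leq \epsilon/2 < \epsilon$.

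For $\CG_2$, I would again take $\CA_n$ adapted to $\epsilon/2$ and extend it to $\CA''_n$ by adjoining singletons $\{v\}$ for each $v \in V(G''_n) \setminus V(G_n)$; part sizes remain bounded by $K_{\epsilon/2}$. The boundary edges of $G''_n$ under $\CA''_n$ decompose as (a) boundary edges of $G_n$ under $\CA_n$, contributing at most $|E(\CG)^{\CA}_n|$, and (b) edges in $E(G''_n) \setminus E(G_n)$, each of which---reading $G_n \subset G''_n$ as an induced-subgraph inclusion, the natural convention for the sofic approximation graphs of this paper---must have at least one endpoint in $V(G''_n) \setminus V(G_n)$. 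The uniform degree bound then forces $|E(G''_n) \setminus E(G_n)| \leq d\,|V(G''_n) \setminus V(G_n)| \leq d\epsilon_n|V(G_n)|$, and combined with $|V(G''_n)| \geq |V(G_n)|$ this gives $|E(\CG_2)^{\CA''}_n|/|V(G''_n)| \leq |E(\CG)^{\CA}_n|/|V(G_n)| + d\epsilon_n$, whose limsup is less than $\epsilon/2 < \epsilon$.

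The delicate point lies entirely in the $\CG_2$ case: without the induced-subgraph reading---that is, without the guarantee that every new edge in $E(G''_n) \setminus E(G_n)$ touches some new vertex---the uniform degree bound alone does not prevent order $|V(G_n)|$ many new edges from appearing strictly inside $V(G_n)$, which would spoil the ratio. Once that convention is fixed at the outset, everything else reduces to elementary bookkeeping around the partition $\CA_n$.
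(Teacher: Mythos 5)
Your proof is correct and follows essentially the same route as the paper: restrict the parts $A^n_i$ to $V(G'_n)$ for $\CG_1$, adjoin singletons for the new vertices for $\CG_2$, and bound the extra boundary edges by $d\,|V(G''_n)\setminus V(G_n)| \leq d\epsilon_n|V(G_n)|$. The convention you flag (every edge of $G''_n$ not in $G_n$ must meet a new vertex) is indeed needed and is used implicitly in the paper's inclusion $D^\epsilon_n \subset E^\epsilon_n \cup \bigcup_{x\in X_n} B_1(x)$, and it is satisfied in the paper's application of the proposition, so your explicit remark is a welcome clarification rather than a deviation.
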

	\begin{proof}
		Since $\CG$ is hyperfinite, for any $\epsilon>0$  there exists a $K_\epsilon>0$ and sequence of partitions $V(G_n)=\bigsqcup_{i=1}^{k_n}A^n_i$ such that (1) and (2)  in Definition \ref{defn: hyperfinite} hold.
		
		Now all $G'_n\in \CG_1$, define $B^n_i=A^n_i\cap V(G'_n)$. Denote by $F^\epsilon_n$ the edge set of all $(x, y)\in E(G'_n)$ such that $x\in B^n_i$, $y\in B^n_j$ with $i\neq j$. Observe $F^\epsilon_n\subset E^\epsilon_n$. Then one has 
		\[\limsup_{n\to \infty} \frac{|F^\epsilon_n|}{|V(G'_n)|}\leq \limsup_{n\to \infty}\frac{|E^\epsilon_n|}{(1-\epsilon_n)|V(G_n)|}=\limsup_{n\to \infty} \frac{|E^\epsilon_n|}{|V(G_n)|}< \epsilon.\]
		
		Now look at $G''_n\in \CG_2$. Denote by $X_n$ the vertex set $V(G''_n)\setminus V(G_n)$. Note that $|X_n|\leq \epsilon_n|V(G_n)|$.  Now, we partition $V(G''_n)=\bigsqcup_{i=1}^{k_n}A^i_n\sqcup \bigsqcup_{x\in X_n}\{x\}$. Denote by $D^\epsilon_n$ the edge set of all $(x, y)\in E(G''_n)$ such that either $x\in A^n_i$, $y\in A^n_j$ with $i\neq j$, or $x, y\in X_n$ with $x\neq y$, or $x\in A^n_i$ and $y\in X_n$. Observe that $D^\epsilon_n\subset E^\epsilon_n\cup\bigcup_{x\in X_n}B_1(x)$, which implies that $|D^\epsilon_n|\leq |E^\epsilon_n|+d|X_n|$. Therefore, one has 
		\[\limsup_{n\to \infty}\frac{|D^\epsilon_n|}{V(G''_n)}\leq \limsup_{n\to \infty}(\frac{|E^\epsilon_n|}{|V(G_n)|}+d\epsilon_n)< \epsilon.\]
	\end{proof}

	Let $T$ be a finite set. We say a directed graph $G$ is a \textit{proper $T$-labeled} graph if there is a function $f: E(G)\to T$ such that for any $e_1\neq e_2\in E(G)$ with $s(e_1)=s(e_2)$, one has $f(e_1)\neq f(e_2)$. Here, $s(e)$ denotes the emitting vertex of $e$, namely, the source of $e$. Note that by definition, for a proper 
    $T$-labeled graph $G$ and a vertex $x\in V(G)$, there are at most $|T|$ edges $e$ with $s(e)=x$. This implies that $|E(G)|\leq |T|\cdot |V(G)|$.
 
	Let $\CG=\{G_n: n\in \N\}$ be a sequence of proper $T$-labeled directed graphs. We define a  \textit{diagonal product} proper $T$-labeled graph sequence $\CH=\{H_n: n\in\N\}$ from $\CG$ with respect to a sequence $\{l_n\in \N: n\in \N\}$  by declaring $V(H_n)=V(G_n)^{l_n}$ and $(x_1,\dots, x_{l_n})$ is connected to $(y_1,\dots, y_{l_n})$ in $H_n$ by an edge labeled by $t\in T$ if $(x_i, y_i)$ is an edge in $E(G_n)$ with the label $t$ for any $1\leq i\leq l_n$. In this case, we also write $H_n=G^{l_n}_n$ and $\CH=\{G^{l_n}_n: n\in \N\}$ for simplicity. 
	
	\begin{rmk}\label{rmk: edge number of product}
		For each diagonal product proper $T$-label graph $H_n=G^{l_n}_n$ in $\CH$  above, one has $|E(H_n)|\leq |T|\cdot |V(H_n)|=|T|\cdot|V(G_n)|^{l_n}$.
	\end{rmk}
	
	
	
	
	
	\begin{prop}\label{prop: finite diagonal product}
		Let $\CG=\{G_n: n\in \N\}$ be a graph sequence in which each graph $G_n$ is labeled by a finite set $T$ and $\{l_n: n\in \N\}$ a sequence of positive integers.    Suppose $\CG$ is hyperfinite and $\CH=\{H_n: n\in \N\}$ is a diagonal product of $\CG$ with respect to $\{l_n: n\in \N\}$ in which $l_n\equiv l$ is a constant.  Then  $\CH$ is hyperfinite.
	\end{prop}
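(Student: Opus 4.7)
The plan is to construct, for each $\epsilon > 0$, a partition of each $V(H_n) = V(G_n)^l$ by taking the $l$-fold product of a partition of $V(G_n)$ witnessing the hyperfiniteness of $\CG$ at an appropriately smaller scale $\delta$. Given the partitions $V(G_n) = \bigsqcup_{i=1}^{k_n} A_i^n$ furnished by Definition \ref{defn: hyperfinite} at scale $\delta$, the cells of the product partition $\CB_n$ of $V(H_n)$ will be the sets of the form $A_{i_1}^n \times \cdots \times A_{i_l}^n$. Each such cell has size at most $K_\delta^l$, a constant depending only on $\delta$ and $l$, so the uniform cell-size condition of hyperfiniteness is immediate.

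The main step is to bound the number of crossing edges of $\CB_n$. An edge of $H_n$ is crossing precisely when at least one of its $l$ coordinate projections is a crossing edge of $G_n$ for $\CA_n$. The key counting observation, exploiting that $G_n$ is proper $T$-labeled, is that for a fixed coordinate $k$ and a fixed crossing edge $e_0 = (x, y)$ of $G_n$ with label $t \in T$, the number of edges of $H_n$ whose $k$-th coordinate projection equals $e_0$ is at most $|V(G_n)|^{l-1}$. Indeed, each of the remaining $l - 1$ coordinates must independently contribute some $t$-labeled edge of $G_n$, and properness forces the total number of $t$-labeled edges in $G_n$ to be at most $|V(G_n)|$ (one per source vertex).

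Summing this bound over $k \in \{1, \ldots, l\}$ and over $e_0 \in E(\CG)^\CA_n$, which only overcounts the $H_n$-crossing edges, yields
\[
\frac{|E(\CH)^\CB_n|}{|V(H_n)|} \leq \frac{l \cdot |E(\CG)^\CA_n| \cdot |V(G_n)|^{l-1}}{|V(G_n)|^l} = l \cdot \frac{|E(\CG)^\CA_n|}{|V(G_n)|}.
\]
Therefore, given $\epsilon > 0$, I would apply hyperfiniteness of $\CG$ with $\delta = \epsilon/(2l)$ to obtain partitions $\CA_n$ with $\limsup_n |E(\CG)^\CA_n|/|V(G_n)| < \delta$, form the product partitions $\CB_n$ as above, and conclude $\limsup_n |E(\CH)^\CB_n|/|V(H_n)| \leq l\delta < \epsilon$, completing the verification.

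The main subtlety is precisely the counting step: the properness of the $T$-labeling is what keeps the number of $H_n$-edges lying above any given $G_n$-edge under control. Without it the bound $|V(G_n)|^{l-1}$ would fail and the argument would collapse. The hypothesis $l_n \equiv l$ is equally essential, both for the uniform bound $K_\delta^l$ on cell sizes and for the linear prefactor in the crossing-edge count; if $l_n \to \infty$ then $K_\delta^{l_n}$ is no longer bounded in $n$ and this approach cannot succeed in its present form.
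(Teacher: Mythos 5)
Your argument is correct. You use the same decomposition as the paper --- the $l$-fold product partition $\CB_n$ whose cells are products $A^n_{i_1}\times\cdots\times A^n_{i_l}$ of cells of a partition $\CA_n$ witnessing hyperfiniteness of $\CG$ at a smaller scale --- but your counting of the crossing edges is genuinely different and simpler. The paper stratifies the crossing edges of $H_n$ according to the exact number $m$ of coordinates in which the two cells differ, bounds each stratum by $\binom{l}{m}|E(\CG)^\CA_n|^m\cdot|T|\cdot|V(G_n)|^{l-m}$ via Remark \ref{rmk: edge number of product}, and sums the binomial series; this forces it to run $\CG$'s hyperfiniteness at scale $\delta/(2^l|T|)$. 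You instead observe that a crossing edge of $H_n$ must project, in at least one coordinate, to a crossing edge of $G_n$, and then union-bound over the coordinate $k$ and the crossing edge $e_0$: properness of the $T$-labeling gives at most $|V(G_n)|$ edges of any fixed label, hence at most $|V(G_n)|^{l-1}$ edges of $H_n$ lying above $e_0$ in coordinate $k$. This yields the cleaner linear estimate
\[
\frac{|E(\CH)^\CB_n|}{|V(H_n)|}\leq l\cdot\frac{|E(\CG)^\CA_n|}{|V(G_n)|},
\]
so it suffices to run $\CG$ at scale $\epsilon/(2l)$, with no dependence on $|T|$ or $2^l$. Both arguments use properness at the same spot (controlling how many labeled edges sit over a given vertex or edge), and your remark that the constancy $l_n\equiv l$ is what keeps the cell bound $K_\delta^l$ uniform matches the role it plays in the paper; your version is the more economical of the two.
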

	\begin{proof}
		Suppose $\CG$ is hyperfinite, which means for any $1>\epsilon>\delta>0$, there exists an $K_\delta>0$ and a sequence of partitions $\CA_n=\{A^n_i: 1\leq i\leq k_n \}$ of the vertex sets $V(G_n)$, i.e., $V(G_n)=\bigsqcup_{i=1}^{k_n}A^n_i$, satisfying that
		\[\limsup_{n\to \infty}\frac{|E(\CG)^\CA_n|}{|V(G_n)|}< \frac{\delta}{2^l\cdot|T|},\]
		where $E(\CG)^\CA_n$ is the set of all edges between all pairs of different $A^i_n$ and $A^j_n$. 
		Then there is an $N$ such that $\frac{|E(\CG)^\CA_n|}{|V(G_n)|}<  \frac{\delta}{2^l\cdot|T|}$ whenever $n>N$. Now for such an $n>N$, define a new index set $I=([1, k_n]\cap \Z)^{l}$ and for any $i\in I$, define $B^n_i=\prod_{1\leq j\leq l_n}A^n_{i(j)}$.  First, all these $B^n_i$ for $i\in I$ form a partition of $V(H_n)$ and $|B^n_i|\leq K^l_\delta$.  We denote by $\CB_n$ this partition. We now estimate the cardinality of edges between these $B^n_i$. For $i\neq i'\in I$, there is at least one coordinate $j\leq l$ such that $i(j)\neq i'(j)$. Now look at the case that there are $m$ coordinates  $j_1, \dots, j_m$ such that $i(j_k)\neq i'(j_k)$ for $k\leq m$. Assume $j_1, \dots, j_m$ are exactly the first $m$ coordinates. Then all the edges between all such $B^n_{i}$ and $B^n_{i'}$ are contained in the set $(\prod_{j\leq m} E(\CG)^\CA_n)\times E(G^{l-m}_n)$, whose cardinality is bounded by $|E(\CG)^\CA_n|^m\cdot |T|\cdot |V(G_n)|^{l-m}$ by Remark \ref{rmk: edge number of product}. Now define $F(\CH)^{\CB}_{n, m}$ to be the set of all edges between all $B^n_{i}$ and $B^n_{i'}$, where there are exact $m$ coordinates  $j_1, \dots, j_m$  such that $i(j_k)\neq i'(j_k)$ for $k\leq m$. Then the above argument entails 
		\[|F(\CH)^\CB_{n, m}|\leq \begin{pmatrix}
		l\\m
		\end{pmatrix}\cdot |E(\CG)^\CA_n|^m\cdot |T|\cdot |V(G_n)|^{l-m}.\]
		Then one has that 
		\[\frac{|F(\CH)^\CB_{n, m}|}{|V(H_n)|}\leq \frac{\begin{pmatrix}
			l\\m
			\end{pmatrix}|E(\CG)_n^\CA|^m\cdot |T|\cdot|V(G_n)|^{l-m}}{|V(G_n)|^{l}}=\begin{pmatrix}
		l\\m
		\end{pmatrix}\cdot |T| \cdot\frac{ |E(\CG)^\CA_n|^m}{|V(G_n)|^m},\]
		which implies that 
		\[\frac{|F(\CH)^\CB_{n, m}|}{|V(H_n)|}< \begin{pmatrix}
		l\\m
		\end{pmatrix}\cdot \frac{\delta^m}{2^l}<\begin{pmatrix}
		l\\m
		\end{pmatrix}\cdot\frac{\delta}{2^l}.\]
		On the other hand, note that $E(\CH)^\CB_n=\bigsqcup_{m=1}^lF(\CH)^\CB_{n, m}$ and therefore one has
		\[\frac{|E(\CH)^\CB_n|}{|V(H_n)|}=\sum_{m=1}^l\frac{|F(\CH)^\CB_{n, m}|}{|V(H_n)|}<\delta\]
		for any $n>N$. This shows that $\limsup_{n\to \infty}\frac{|E(\CH)^\CB_n|}{|V(H_n)|}\leq \delta<\epsilon$, which implies that $\CH$ is hyperfinite.
	\end{proof}
	
	\subsection{Sofic approximations and amenability for topological full groups}
	
	The following result appeared in \cite[Proposition 4.1]{El2} and \cite[Theorem 1.11]{Ka}.
	
	\begin{thm}\cite[Proposition 4.1]{El2} and \cite[Theorem 1.11]{Ka}\label{thm: amenability vs hyperfinite}
		Let $\Gamma$ be a finitely generated group and $\CG$ a sofic approximation graph sequence for $\Gamma$. Then $\Gamma$  is amenable if and only if $\CG$ is hyperfinite.
	\end{thm}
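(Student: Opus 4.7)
My plan is to prove the two implications separately. The $(\Rightarrow)$ direction is an Ornstein--Weiss quasi-tiling argument transferred to the sofic graphs $G_n$ via $\theta_n$. The $(\Leftarrow)$ direction conversely extracts a Følner set of $\Gamma$ from an ``interior-heavy'' piece of a hyperfinite partition by pulling back along the sofic orbit map $g \mapsto \theta_n(g)(x)$. Throughout, fix a finite symmetric generating set $T \subset \Gamma$ with associated almost-actions $\theta_n : \Gamma \to \operatorname{Map}(A_n)$ producing $\CG = \{G_n\}$.

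For $(\Rightarrow)$, fix $\epsilon > 0$ and apply Ornstein--Weiss quasi-tiling to obtain finite $F_1, \ldots, F_m \subset \Gamma$ containing $e_\Gamma$ with $T$-boundary of each $F_j$ at most $(\epsilon/(3|T|))|F_j|$ and such that translates of the $F_j$'s $(\epsilon/3)$-quasi-tile $\Gamma$. Set $K_\epsilon = \max_j |F_j|$ and choose $n$ large enough that $\theta_n$ is an $(F, \delta)$-almost action, where $F$ is a large ball in $\Gamma$ and $\delta$ is tiny relative to $\epsilon / K_\epsilon$. Mirror the greedy quasi-tiling procedure inside $V(G_n)$: pick centers one at a time, using the pseudo-translates $\theta_n(F_j)(c)$ in place of genuine $cF_j$. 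The almost-action property ensures these pseudo-translates have nearly correct size and are nearly disjoint. Throw uncovered vertices into singleton pieces. Each piece has size at most $K_\epsilon$, and the boundary-edge count decomposes into three contributions: edges staying inside a tile $\theta_n(F_j)(c)$ but crossing the $T$-boundary of $F_j$ (at most $\epsilon/3$); edges incident to residual singleton vertices (at most $\epsilon/3$); and miscounts from the sofic approximation error (at most $\epsilon/3$), summing to $\epsilon$.

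For $(\Leftarrow)$, fix finite $F \subset \Gamma$ with $T \cup \{e_\Gamma\} \subset F$ and $\epsilon > 0$; I want $\Phi \subset \Gamma$ with $|F\Phi \triangle \Phi| < \epsilon|\Phi|$. Choose auxiliary $\epsilon' \ll \epsilon$ and radius $r$ with $F \subset B_r(T)$. Apply hyperfiniteness at level $\epsilon'$ to get a partition $V(G_n) = \bigsqcup_i A_i^n$ with $|A_i^n| \leq K_{\epsilon'}$ and $|E_n^{\epsilon'}|/|V(G_n)| < \epsilon'$, pieces connected by Remark \ref{rmk: hyperfinite-connected component}. For $n$ large enough the sofic injectivity condition (2) of Definition \ref{defn: sofic} makes the orbit map $g \mapsto \theta_n(g)(x)$ injective on $B_r(T)$ for all but an $\epsilon'$-fraction of $x \in V(G_n)$, while the almost-action condition (1) makes it respect multiplication on $B_r(T)^2$ at similarly most $x$. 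A double-averaging argument---first over pieces weighted by size, then over vertices within a piece---produces a piece $A$ and a vertex $x_0 \in A$ such that the $T$-boundary of $A$ is small, the orbit $\{\theta_n(g)(x_0) : g \in B_r(T)\}$ lies almost entirely in $A$, and $\theta_n$ is honestly multiplicative at $x_0$. Setting $\Phi = \{g \in B_r(T) : \theta_n(g)(x_0) \in A\}$, the boundary bound on $A$ transfers to $|F\Phi \triangle \Phi| < \epsilon|\Phi|$, while injectivity ensures $\Phi$ is an authentic finite subset of $\Gamma$ with $|\Phi| \leq K_{\epsilon'}$.

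The main obstacle is the $(\Leftarrow)$ direction, specifically the coordination of parameters. The radius $r$ must be large enough that $\Phi$ captures enough of the group-theoretic boundary structure relative to $F$, but this forces stronger sofic approximation properties and hence larger $n$, which in turn must be compatible with the hyperfiniteness level $\epsilon'$ and the piece-size bound $K_{\epsilon'}$. The delicate bookkeeping ensuring that the ``good piece plus good vertex'' exists simultaneously satisfying all three properties above---so that the pullback $\Phi$ is genuinely Følner for $F$---is the technical core. Once these quantitative choices are properly nested, the passage from a partition boundary bound in $G_n$ to a Følner condition in $\Gamma$ goes through cleanly.
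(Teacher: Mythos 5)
The paper does not actually prove this theorem; it quotes it from Elek--Szab\'o and Kaiser, so the benchmark is the standard argument in those references. Your $(\Rightarrow)$ direction follows that standard route (transporting an Ornstein--Weiss quasi-tiling of $\Gamma$ into $G_n$ through $\theta_n$) and is fine in outline, with the caveat that near-disjointness of the pseudo-translates is not ``ensured by the almost-action property'' but has to be arranged by the $\epsilon$-disjoint greedy selection of centers, exactly as in the quasi-tiling proof.

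The $(\Leftarrow)$ direction, however, has a genuine gap at its central step. You ask for a piece $A$ and a vertex $x_0\in A$ such that $g\mapsto\theta_n(g)(x_0)$ is injective on $B(\Gamma,r)$ \emph{and} the orbit $\{\theta_n(g)(x_0):g\in B(\Gamma,r)\}$ lies almost entirely in $A$. These two demands together force $(1-o(1))|B(\Gamma,r)|\le|A|\le K_{\epsilon'}$ and make $\Phi=\{g\in B(\Gamma,r):\theta_n(g)(x_0)\in A\}$ essentially the whole ball $B(\Gamma,r)$. For such a $\Phi$ the bound on $\partial A$ does not transfer to $|F\Phi\bigtriangleup\Phi|<\epsilon|\Phi|$: for $g$ in the outer shell of $B(\Gamma,r)$ the translates $tg$ leave $B(\Gamma,r)$, hence leave $\Phi$, and the partition gives no control on how many such $g$ there are---their proportion is precisely the isoperimetric ratio of the ball, i.e.\ the very quantity you are trying to prove can be made small. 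Concretely, with $r$ chosen only so that $F\subset B(\Gamma,r)$ the desired conclusion is already false for $\Gamma=\Z$, $T=\{\pm1\}$, $F=T\cup\{0\}$, $\epsilon<2/3$: no subset of $B(\Z,1)$ is $\epsilon$-F{\o}lner; while if you let $r$ grow to repair this, ``orbit almost entirely in $A$'' becomes incompatible with injectivity on $B(\Gamma,r)$. The correct mechanism is the opposite containment: take the pieces connected (Remark \ref{rmk: hyperfinite-connected component}), discard the pieces that either satisfy $|\partial A|\ge\sqrt{\epsilon'}|A|$ or contain a vertex at which $\theta_n$ fails to be injective/multiplicative on $B(\Gamma,K_{\epsilon'}+1)$ (their total size is small), pick a surviving piece $A$ with any $x_0\in A$, and let $\Phi$ be the connected component of $e_\Gamma$ in $\{g\in\Gamma:\theta_n(g)(x_0)\in A\}$. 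Connectivity plus injectivity force $\Phi\subset B(\Gamma,K_{\epsilon'})$ and $|\Phi|=|A|$, every $T$-translate of $\Phi$ stays inside the window where $\theta_n$ is controlled (so the quantifiers must be nested $\epsilon\to K_{\epsilon}\to r\to n$, not $r$ chosen from $F$ alone as you have it), and $\partial^-_T\Phi$ then injects into $\partial A$, giving $|\partial^-_T\Phi|\le\sqrt{\epsilon'}\,|\Phi|$; F{\o}lner sets for the generating set $T$ suffice. This is also why Remark \ref{rmk: Folner function} records $\fol_{\Gamma,T}(\epsilon)\le K_\epsilon$: the F{\o}lner set is (the pullback of) a single partition piece, not a ball.
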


 Let $G$ be a countable discrete amenable group and $\CF=\{F_n: n\in \N\}$ a F{\o}lner sequence of $\Gamma$. For any finite set $T$ in $G$, define a graph structure on each $F_n$ by setting the vertex set $V(F_n)=F_n$ and edges are given by $(g, tg)$ for any $t\in T, g\in F_n$ whenever $tg\in F_n$. It is not hard to see Ornstein-Weiss quasi-tiling theorem shows that $\CF$ is hyperfinite. We recall the following definitions of boundaries.
 
 \begin{defn}\label{defn: bdries}
      Let $S, A$ be finite sets in $G$. We denote by 
\begin{enumerate}
    \item $\partial^+_SA=\{g\in G\setminus A: Sg\cap A\neq \emptyset\}$ the \textit{exterior boundary} of $A$ with respect to $S$.
    \item $\partial^-_SA=\{g\in A: Sg\cap A^c\neq \emptyset\}$ the \textit{interior boundary} of $A$ with respect to $S$.
    \item $\partial_SA=\partial^+_SA\cup \partial^-_SA$ the \textit{boundary} of $A$ with respect to $S$.
     \end{enumerate}
     If $G$ is generated by a finite symmetric set $S$. For the Cayley graph of $(G, S)$ equipped with length metric $d$, one also defines
     \begin{enumerate}
         \item $\partial^+_rA=\{g\in A^c: d(g, A)\leq r\}$.
         \item $\partial^-_rA=\{g\in A: d(x, A^c)\leq r\}$.
         \item $\partial_rA=\partial^+_rA\cup \partial^-_rA$.
     \end{enumerate}
     A finite set $A$ is said to be $(S, \epsilon)$-\textit{invariant} if $|\partial_S^-A|<\epsilon|A|$.    \end{defn}
To avoid confusion, we recall that in the context of graphs, the notion $\partial A$ is introduced in \ref{rmk: hyperfinite-connected component} to denote edges with exactly one end in $A$. Let $G$ be a finitely generated group with the finite symmetric generator set $S$ and $A$ a finite subset of $G$, it is direct to see that $|\partial A|\leq |S|\cdot |\partial^-_SA|$.

  \begin{rmk}\label{rmk: bdry}
Suppose $S$ is a finite symmetric set in $G$ containing $e$, observe that \[(1/|S|)\cdot |\partial^-_SA|\leq |\partial^+_SA|\leq |S|\cdot |\partial^-_SA|.\]
This implies that $|\partial_SA|\leq (1+|S|)|\partial^+_SA|$ and $|\partial_SA|\leq (1+|S|)|\partial^-_SA|$. In addition, for any $r\geq 1$, observe that $\partial^-_rA\subset B_r(\partial^-_1A)$ and similarly $\partial^+_rA\subset B_r(\partial^+_1A)$, which further implies $\partial_rA\subset B_r(\partial_1A)$.
Therefore, one has $|\partial^-_rA|\leq |B(G, r)|\cdot |\partial^-_1A|$ and $|\partial^+_rA|\leq |B(G, r)|\cdot |\partial^+_1A|$ as well as $|\partial_rA|\leq |B(G, r)|\cdot |\partial_1A|$, where $B(G, r)$ denotes the ``closed'' ball of $(G, S)$ centered at $e$.
\end{rmk}   

We recall the following version of Ornstein-Weiss quasi-tiling theorem. 
\begin{thm}\cite[Theorem 4.36]{K-L}\label{prop: Ornstein-Weiss}
   Let $0<\epsilon< 1/2$ and let $n\in \N_+$ such that $(1-\epsilon/2)^n<\epsilon$. Then whenever $e\in T_1\subset T_2\subset \dots \subset T_n$ are finite subsets of $G$ with $|\partial_{T_{k-1}}T_k|\leq (\epsilon/8)|T_k|$ for $k=2, \dots, n$, every $(T_n, \epsilon)$-invariant finite set $A\subset G$ is $\epsilon$-quasitiled by $\{T_1, \dots, T_n\}$ in the sense that there exists finite $C_1, \dots, C_n \subset G$ such that 
   \begin{enumerate}
\item $\bigcup_{k=1}^n\bigcup_{c\in C_k}T_kc\subset A$ with $|\bigcup_{k=1}^n\bigcup_{c\in C_k}T_kc|\geq (1-\epsilon)|A|$ and
\item for each $c\in C_k$, there exists a set $\widehat{T_kc}\subset T_kc$ with $|\widehat{T_kc}|\geq (1-\epsilon)|T_kc|$ such that the family $\{\widehat{T_kc}: c\in C_k, k=1,\dots, n\}$ is disjoint.
\end{enumerate}
\end{thm}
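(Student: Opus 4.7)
The plan is to construct the sets $C_1,\dots,C_n$ by a greedy algorithm that packs the largest tiles first and then fills the leftover region with progressively smaller tiles. Concretely, I would proceed from $k=n$ down to $k=1$: having already chosen $C_n,\dots,C_{k+1}$, let $W_k=\bigcup_{j>k}\bigcup_{c\in C_j}\widehat{T_jc}$ be the ``reliably disjoint'' region selected so far, and choose $C_k$ to be a maximal subset of $\{c\in G:T_kc\subset A\}$ such that the pieces $\widehat{T_kc}:=T_kc\setminus W_k$ are pairwise disjoint and each satisfies $|\widehat{T_kc}|\geq(1-\epsilon)|T_k|$. This directly gives condition (2) of the statement, and the inclusion in (1) is automatic from the constraint $T_kc\subset A$.

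The heart of the argument is to verify the mass bound in (1), which I would reduce to an inductive decay estimate. Let $U_k=A\setminus\bigl(W_k\cup\bigcup_{c\in C_k}\widehat{T_kc}\bigr)$ denote the portion of $A$ still uncovered after stage $k$, with $U_{n+1}:=A$. The claim is that $|U_k|\leq(1-\epsilon/2)\,|U_{k+1}|$, whence $|U_1|\leq(1-\epsilon/2)^n|A|<\epsilon|A|$ by the hypothesis on $n$, yielding (1). To prove this one-step decay, I would argue by contradiction: if $|U_k|>(1-\epsilon/2)|U_{k+1}|$, then a double-counting argument (averaging $\eins_{U_k}$ over translates $T_kc$) produces some $c\in G$ for which $T_kc$ has at least a $(1-\epsilon/2)$-fraction of its mass in $U_k$, in particular $|T_kc\cap W_k|\leq (\epsilon/2)|T_k|$. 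Using the invariance hypothesis $|\partial_{T_{k-1}}T_k|\leq(\epsilon/8)|T_k|$ together with the $(T_n,\epsilon)$-invariance of $A$, one can slide this $c$ slightly to guarantee $T_kc\subset A$ while preserving the disjointness estimate. Such a $c$ could be appended to $C_k$, contradicting maximality.

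The delicate point is the boundary bookkeeping linking successive scales. The condition on $\partial_{T_{k-1}}T_k$ is needed precisely to control the interaction between the outer shells of the smaller tiles $T_j$ ($j\leq k$) already placed inside $W_k$ and the candidate translates of the larger tile $T_k$: without it, the ``sliding'' step above could be absorbed into the small set of translates whose $T_k$-translate fails to lie in $A$ or overlaps $W_k$ by too much. Calibrating the three constants $\epsilon$, $\epsilon/2$, $\epsilon/8$ so that the inductive inequality $|U_k|\leq(1-\epsilon/2)|U_{k+1}|$ actually closes is, I expect, the main obstacle; once the constants are chosen coherently and the invariance is used to ensure that a large fraction of translates $T_kc$ lie fully inside $A$, the geometric decay and hence the $\epsilon$-quasitiling follow directly from greedy maximality at each scale.
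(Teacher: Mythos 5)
A preliminary remark: the paper does not prove this statement at all — it is quoted verbatim from Kerr--Li \cite[Theorem 4.36]{K-L} — so I am measuring your sketch against the standard Ornstein--Weiss argument given there. Your architecture (greedy selection from the largest tile downwards, geometric decay of the uncovered set, the hypothesis $(1-\epsilon/2)^n<\epsilon$ finishing the estimate) is the right shape, but the step that is supposed to drive it, the one-step decay $|U_k|\leq(1-\epsilon/2)|U_{k+1}|$ proved by contradiction, is not established, and the mechanism you propose would fail. Averaging $\eins_{U_k}$ over translates $T_kc$ controls the typical value of $|T_kc\cap U_k|/|T_k|$ by the density of $U_k$ in the ambient window, essentially $|U_k|/|A|$, not by the ratio $|U_k|/|U_{k+1}|$; so as soon as earlier stages have covered a sizable portion of $A$, the assumption $|U_k|>(1-\epsilon/2)|U_{k+1}|$ produces no translate carrying a $(1-\epsilon/2)$-fraction of its mass in $U_k$. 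In fact no such translate need exist at later stages: the leftover region is thin and ragged, and the impossibility of fitting tiles almost entirely inside it is exactly why the theorem asserts only $\epsilon$-disjoint quasi-tiling rather than tiling. So the contradiction with maximality of $C_k$ never materializes. (Separately, the decay inequality cannot hold unconditionally; it is only available, up to error terms of size proportional to $\epsilon|A|$, while $|U_{k+1}|\geq\epsilon|A|$ — harmless, since otherwise one is already done, but your write-up does not make this distinction.)

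The standard proof uses maximality in the opposite direction. At stage $k$ one takes a maximal family $C_k$ of centers in the uncovered region whose $T_k$-translates lie in $A$ and are $\epsilon$-disjoint from all tiles selected so far; maximality then says that \emph{every} remaining candidate center $c'$ satisfies $|T_kc'\cap\bigcup_{c\in C_k}T_kc|>\epsilon|T_k|$, and double counting (each point lies in at most $|T_k|$ translates $T_kc'$) converts the abundance of candidates into the lower bound $|\bigcup_{c\in C_k}T_kc|\geq\epsilon\cdot(\mathrm{number\ of\ candidates})$; thus the whole selected family, not any single tile, covers an $\epsilon/2$-proportion of what remains. The hypotheses enter precisely in showing that most uncovered points are viable candidates: if $c$ is uncovered and $T_kc$ meets an already placed tile $T_jc'$ with $j>k$, then $c\in(\partial_{T_k}T_j)c'\subset(\partial_{T_{j-1}}T_j)c'$ since $T_k\subset T_{j-1}$, and these sets have total size at most $(\epsilon/8)\sum_j\sum_{c'\in C_j}|T_j|$, while centers with $T_kc\not\subset A$ are controlled by the $(T_n,\epsilon)$-invariance of $A$. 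Your ``sliding'' step is likewise unsubstantiated (a nearby translate inside $A$ need not remain almost disjoint from $W_k$), but the essential gap is the averaging step: as written, the inductive decay, and hence conclusion (1), is not proved.
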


The following proposition then is a direct application of Theorem \ref{prop: Ornstein-Weiss}, which appeared in \cite{El2} and \cite{Ka}. For the convenience of the readers, we include the proof here. Moreover, the calculation in Proposition \ref{rmk: Folner sequence hyperfinite} will be used in the estimation of F{\o}lner functions of finitely generated subgroups of topological full groups in Theorems \ref{thm: folner function} and \ref{thm: Folner function 2} in the appendix. 
	\begin{prop}\label{rmk: Folner sequence hyperfinite}
		Let $G$ be a countable discrete amenable group and $\CF=\{F_n: n\in \N\}$ a F{\o}lner sequence of $\Gamma$. For any finite set $S$ in $G$, define a graph structure on each $F_n$ by setting the vertex set $V(F_n)=F_n$ and edges are given by $(g, sg)$ for any $s\in S, g\in F_n$ whenever $sg\in F_n$. Then $\CF$ is hyperfinite. Therefore, if $G$ is finitely generated by $T$. Then the graph sequence $\CF$ is a hyperfinite sofic approximation graph sequence for $G$. \end{prop}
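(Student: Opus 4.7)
The plan is to apply the Ornstein--Weiss quasi-tiling theorem (Theorem \ref{prop: Ornstein-Weiss}) as the main engine: partition each Følner set into tiles that are large relative to $S$ and then bound the edges crossing tile boundaries. Fix $\epsilon > 0$, choose $\epsilon' > 0$ small enough that (for concreteness) $7|S|\epsilon' < \epsilon$, and pick $m \in \N_+$ with $(1-\epsilon'/2)^m < \epsilon'$. Using amenability of $G$, inductively construct a nested sequence $e \in T_1 \subset T_2 \subset \cdots \subset T_m$ of finite subsets so that each $T_k$ is $(S,\epsilon')$-invariant and $|\partial_{T_{k-1}}T_k| \leq (\epsilon'/8)|T_k|$ for $k \geq 2$; set $K_\epsilon := |T_m|$.

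Since $\CF$ is Følner, every $F_n$ with $n$ large is $(T_m,\epsilon')$-invariant, so Theorem \ref{prop: Ornstein-Weiss} provides centres $C_1,\dots,C_m \subset G$ and a disjoint family $\{\widehat{T_k c} : c \in C_k,\ 1 \leq k \leq m\}$ inside $F_n$ with $|\widehat{T_k c}| \geq (1-\epsilon')|T_k c|$ and $|\bigcup_{k,c} T_k c| \geq (1-\epsilon')|F_n|$. I would partition $V(F_n) = F_n$ by declaring each $\widehat{T_k c}$ to be one block and turning every remaining vertex of $R_n := F_n \setminus \bigsqcup_{k,c}\widehat{T_k c}$ into a singleton; each block then has size at most $|T_m| = K_\epsilon$, which verifies condition (i) of Definition \ref{defn: hyperfinite}.

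The main work is to bound the number of edges between distinct blocks, and the hard part is that the tiles $T_k c$ may overlap, so one cannot simply use $\sum |T_k c| = |\bigcup T_k c|$. Every cross-block edge $(g, sg)$ lies in one of three families: (a) $g \in R_n$; (b) $g \in \widehat{T_k c}$ while $sg \in T_k c \setminus \widehat{T_k c}$; (c) $g \in \widehat{T_k c}$ while $sg \notin T_k c$, which forces $g \in (\partial^-_S T_k)c$. Family (a) contributes at most $|S|\cdot|R_n|$ edges. For (b) and (c), the disjointness of $\{\widehat{T_k c}\}$ together with $|\widehat{T_k c}| \geq (1-\epsilon')|T_k|$ produces the key overlap bound $\sum_{k,c}|T_k c| \leq |F_n|/(1-\epsilon')$, which dominates each of (b) and (c) by a constant multiple of $|S|\epsilon'|F_n|$ via the $(S,\epsilon')$-invariance of the $T_k$. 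Summing all three contributions and dividing by $|V(F_n)|$ yields a ratio strictly below $\epsilon$ for all large $n$, establishing hyperfiniteness.

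For the second assertion, when $G = \langle T \rangle$ is finitely generated, take $S = T$ in the construction above. Define $\Theta_n : G \to \sym(F_n)$ by $\Theta_n(g)(h) = gh$ whenever $h, gh \in F_n$, and extend arbitrarily to a bijection elsewhere; the Følner property directly delivers conditions (1)--(3) of Definition \ref{defn: sofic} for any prescribed finite $F \subset G$ and $\epsilon > 0$ once $n$ is sufficiently large. The induced $T$-labeled graph on $F_n$ coincides with the graph declared in the statement, so $\CF$ is a sofic approximation graph sequence for $G$, and the hyperfiniteness established above completes the proof.
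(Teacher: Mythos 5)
Your proposal is correct and follows essentially the same route as the paper: Ornstein--Weiss quasi-tiling, a partition into the disjoint sets $\widehat{T_kc}$ plus singletons for the leftover vertices, and an edge count that combines the near-invariance of the tiles $T_k$ with the disjointness of the $\widehat{T_kc}$ (your overlap bound $\sum_{k,c}|T_kc|\leq |F_n|/(1-\epsilon')$ is just a repackaging of the paper's per-block estimate $|\partial^-_S\widehat{T_kc}|\leq(|S|+2)\delta|\widehat{T_kc}|$ summed over the disjoint hats). The soficity assertion via left multiplication on Følner sets is likewise the standard argument the paper implicitly invokes.
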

\begin{proof}
  Let $1/2>\epsilon>0$ and write
  \[\delta=\min\{\epsilon/(2|S|(|S|+2)), 1-\sqrt{1-\epsilon/(2|S|)}\}\]  
  for simplicity. Proposition \ref{prop: Ornstein-Weiss} implies that  there exists an $n\in \N_+$ and $(S, \delta)$-invariant sets $e\in T_1\subset T_2\subset\dots\subset T_n$ with $|\partial_{T_{k-1}}T_k|\leq (\delta/8)|T_k|$ such that $F_m$ is $\delta$-quasitiled by $\{T_1, \dots, T_n\}$ for all large enough $m\in \N_+$. This means there exists $C_{1, m}, \dots C_{n, m}$ such that
\begin{enumerate}
\item $\bigcup_{k=1}^n\bigcup_{c\in C_{k, m}}T_kc\subset F_m$ with $|\bigcup_{k=1}^n\bigcup_{c\in C_{k, m}}T_kc|\geq (1-\delta)|F_m|$ and
\item for each $c\in C_{k, m}$, there exists a set $\widehat{T_kc}\subset T_kc$ with $|\widehat{T_kc}|\geq (1-\delta)|T_kc|$ such that the family $\{\widehat{T_kc}: c\in C_{k, m}, k=1,\dots, n\}$ is disjoint.
\end{enumerate}
 Now, for any $F_m$ that is $\delta$-quasitiled, we denote by $D_m=\bigsqcup_{1\leq k\leq n}\bigsqcup_{c\in C_{k,m}}\widehat{T_kc}$ for simplicity
and take the partition
\[F_m=\bigsqcup\{\widehat{T_kc}: c\in C_{k, m}, 1\leq k\leq n\}\sqcup \bigsqcup_{g\in F_m\setminus D_m}\{g\}\]
for $F_m$. Denote by this partition $\CA_m$. Note that 
\[|D_m|=\sum_{1\leq k\leq n}\sum_{c\in C_{k, m}}|\widehat{T_kc}|\geq (1-\delta)|\bigcup_{1\leq k\leq n}T_kC_k|\geq (1-\delta)^2|F_m|.\]
Recall that each $T_k$ is $(S, \delta)$-invariant and $|\widehat{T_kc}|\geq (1-\delta)|T_kc|$. This implies that $|\partial^-_S\widehat{T_kc}|<(|S|+2)\delta|\widehat{T_kc}|$.  Therefore, the set of edges with exact one ends in $\widehat{T_kc}$, denoted by $\partial \widehat{T_kc}$ as in Remark \ref{rmk: hyperfinite-connected component}(1), in the graph $F_n$ satisfies
\[|\partial \widehat{T_kc}|\leq |S|\cdot |\partial_S^-\widehat{T_kc}|\leq |S|(|S|+2)\delta|\widehat{T_kc}|.\]
This further implies
\begin{align*}
    |E(\CF)^\CA_m|\leq& \sum_{1\leq k\leq n, c\in C_{k, m}}|\partial\widehat{T_kc}|+\sum_{g\in F_m\setminus D_m}|B_1(g)|\\
    &\leq |S|(|S|+2)\delta |F_m|+(1-(1-\delta)^2)|S|\cdot|F_m|\leq \epsilon|F_m|
\end{align*}
by our choice of $\delta$. Therefore, $\limsup_{m\to \infty}\frac{|E(\CF)^\CA_m|}{|F_m|}\leq \epsilon$ holds for all large enough $m\in \N_+$. Thus, $\CF$ is hyperfinite because any element in the partition $A_m$ has cardinality bounded by $|T_n|$.
\end{proof}

	\begin{rmk}\label{rem: sofic arppox subgroup}
Let $\alpha: G\curvearrowright X$ be an action with dense free points, which is either distal or minimal. Let $T$ be a finite subset of $[[\alpha]]$ and $\Gamma=\langle T \rangle$. Propositions \ref{prop: soficity 1} and \ref{prop: sofic 2} would yield sofic approximation graph sequences for $\Gamma$ with the help of
Proposition \ref{prop: compressed sofic representation}. To be more specific, let $\{F_n: n\in \N\}$ be a F{\o}lner sequence for $G$ and $\{\delta_r: r\in \N\}$ a decreasing sequence of positive numbers converging to $0$. If $\alpha$ is minimal, Proposition \ref{prop: soficity 1} and \ref{prop: compressed sofic representation} imply that  there exist a subsequence $\{F_{n_r}: r\in\N\}$ of $\{F_n:n\in \N\}$ and integers $l_r\in \N$ such that the sequence $\CS=\{S_r: r\in \N\}$ with $\{\delta_r: r\in\N\}$, in which $S_r=(F_{n_r}x)^{l_r}=F_{n_r}x\times\dots \times F_{n_r}x$ for $l_r$ times, where $x$ is a free point for $\alpha$.  

In the case that $\alpha$ is distal, similarly, each $S_r$ can be chosen to be $S_r=A_{n_r}^{l_r}$ for some integer $l_r$ by Proposition \ref{prop: compressed sofic representation}, where $\{A_n: n\in \N\}$ is obtained in Proposition \ref{prop: sofic 2} such that $A_n=\bigsqcup_{i\in I}F_nx_i$ for finitely many chosen free $x_i\in X$.

Denote by $G_{n}$ for $F_{n}x$ or $A_{n}$ above, we define maps $\Theta_n: \Gamma\to \sym(G_n)$ as in Propositions \ref{prop: soficity 1} and \ref{prop: sofic 2}, which leads to a graph structure on $G_n$ such that $V(G_n)=G_n$ and $E(G_n)$ consists of all $(z, \Theta_n(t)(z))$ for $t\in T$. We define $\CG=\{G_n: n\in \N\}$ is a graph sequence of proper $T$-labeled directed graphs.

Then, with respect to proper chosen sequences  $\{n_r: r\in \N\}$ and $\{l_r: r\in \N\}$ of integers and sequence $\{\delta_r: r\in \N\}$ of real numbers converging to $0$ in Propositions \ref{prop: soficity 1} and \ref{prop: sofic 2}, the $\CS=\{S_r: r\in \N\}$ form a sofic approximation graph sequence with  for $\Gamma$ in which the map $\theta_r: \Gamma\to \map(G_{n_r}^{l_r})$ is given by
		\[\theta_r(\gamma)(z_1,\dots, z_{l_r})=(\Theta_{n_r}(\gamma)(z_1), \dots, \Theta_{n_r}(\gamma)(z_{l_r})).\]
		Then $(z_1, \dots, z_{l_r})$ is connected with  $\theta_r(t)(z_1, \dots, z_{l_r})$ for all $t\in T$ in $G_{n_r}^{l_r}$.
		If we want to indicate the ``dimension'' $l_r$, we also write 
		$\theta^{l_r}_r$ for $\theta_r$.
  
It is direct to see that $\CS$ is a diagonal product of a proper subsequence $\{G_{n_r}: r\in \N\}$ of $\CG$ and therefore, each $S_r$ is also properly $T$-labeled. Moreover, each $G_n\in \CG$ is of degree uniformly bounded by $2|T|$. This is because any $y\in V(G_n)$ is linked to $\Theta_n(t)(y)$ and $(\Theta_n(t))^{-1}(y)$ for $t\in T$. Thus, each $S_r\in \CS$ is also of degree bounded uniformly by $2|T|$, which means $|B_1(z)|\leq 2|T|$ for any $z\in S_r$.
	\end{rmk}
	
	Let $\Gamma$ be a finitely generated group. Throughout the paper, we denote by $B(\Gamma, R)$ the $R$-ball in the Cayley graph of $\Gamma$.

	\begin{lem}\label{lem: sofic enough}
		Let $\alpha: G\curvearrowright X$ be a distal action of an amenable group $G$ on the Cantor set $X$ such that free points are dense. Let $\{F_n: n\in \N\}$ be a F{\o}lner sequence of $G$ and $T\subset [[\alpha]]$ a finite symmetric set and $\Gamma=\langle T\rangle$. Then for any $\delta>0$, there is an $l\in \N$ such that for any $K\in \N$ there is an $N$ depending on $l$ and $K$ such that for any $n>N$, the sets
		\[Q_1=\{z\in A_n^l: \theta^l_n(\gamma_1)\theta^l_n(\gamma_2)(z)=\theta^l_n(\gamma_1\gamma_2)(z)\ \text{for any }\gamma_1, \gamma_2\in B(\Gamma, K^l)\} \]
		and 
		\[Q_2=\{z\in A_n^l: \theta^l_n(\gamma)(z)\neq z \ \text{for any }\gamma\in B(\Gamma, K^l)\setminus \{\id_X\} \}\]
		satisfy $|Q_1|, |Q_2|\geq (1-\delta/2)|A_n|^l$.
	\end{lem}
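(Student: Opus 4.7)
The plan is to lift the multiplicativity and injectivity properties from the single-coordinate sofic approximation $\Theta_n$ of Proposition~\ref{prop: sofic 2} to the diagonal product $\theta_n^l$ on $A_n^l$, while keeping $l$ a function of $\delta$ alone. The key structural input is Lemma~\ref{lem: distal}: for the clopen partition $\CU=\{U_1,\dots,U_k\}$ produced there, every $\gamma\in\Gamma$ restricts on each $U_j$ to a single element $g_{\gamma,U_j}$ of the subgroup $H=\langle S\rangle\leq G$, and the inductive construction in that lemma presents $g_{\gamma,U_j}$ as a product of at most $|\gamma|_T$ elements of $S$. Hence for $\gamma\in B(\Gamma,K^l)$, and for products $\gamma_1\gamma_2$ with $\gamma_1,\gamma_2\in B(\Gamma,K^l)$, every relevant cocycle value lies in the finite set $V\subset G$ consisting of all products of at most $2K^l$ elements of $S$.

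I first fix the free points $x_1,\dots,x_k$ with $x_j\in U_j$, the constant $\epsilon_0=\min_j\epsilon_j>0$ coming from $\barbelow{D}(U_j\cap X_j)\geq 3\epsilon_j$ in the minimal set $X_j=\overline{Gx_j}$, and the disjoint-union convention $|A_n|=k|F_n|$, all as in the proof of Proposition~\ref{prop: sofic 2}. Define $\iota\colon A_n\to\{1,\dots,k\}$ by $z\in U_{\iota(z)}$. Next, pick
\[l\in\N\quad\text{so that}\quad k\bigl(1-\tfrac{2\epsilon_0}{k}\bigr)^{l}\leq \delta/4,\]
which is legitimate because $k$ and $\epsilon_0$ depend only on $T$ and $\alpha$, not on $K$. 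Given $K$, the Følner property of $\{F_n\}$ now supplies $N$ so that for every $n>N$ both $|W_n|/|F_n|\geq 1-\delta/(4l)$, where $W_n=\{s\in F_n:Vs\subseteq F_n\}$, and $|F_nx_j\cap U_j|/|F_n|\geq 2\epsilon_0$ for every $j\leq k$.

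Call $z=(z_1,\dots,z_l)\in A_n^l$ \emph{deep} when each $z_i=s_ix_{p_i}$ has $s_i\in W_n$. For deep $z$, the definition of $\Theta_n^{p_i}$ together with $Vs_i\subseteq F_n$ gives $\Theta_n^{p_i}(\gamma)(z_i)=g_{\gamma,U_{\iota(z_i)}}\cdot z_i=\gamma(z_i)$ for every $\gamma\in B(\Gamma,K^l)\cup B(\Gamma,K^l)^2$. Since the product in $[[\alpha]]$ is honest composition of homeomorphisms, this immediately forces $\theta_n^l(\gamma_1)\theta_n^l(\gamma_2)(z)=\theta_n^l(\gamma_1\gamma_2)(z)$ for all $\gamma_1,\gamma_2\in B(\Gamma,K^l)$, so $z\in Q_1$. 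Coordinate-independence in $A_n^l$ combined with Bernoulli's inequality gives $|\{z\text{ deep}\}|/|A_n|^l=(|W_n|/|F_n|)^l\geq 1-\delta/4$, and hence $|Q_1|\geq(1-\delta/2)|A_n|^l$.

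For $Q_2$ I additionally demand the \emph{covering} condition $\{\iota(z_1),\dots,\iota(z_l)\}=\{1,\dots,k\}$. On a deep covering $z$, any $\gamma\in B(\Gamma,K^l)$ with $\theta_n^l(\gamma)(z)=z$ satisfies $g_{\gamma,U_{\iota(z_i)}}\cdot z_i=z_i$ for every $i$; freeness of each $z_i$ promotes this to $g_{\gamma,U_j}=e_G$ for all $j\leq k$, and Lemma~\ref{lem: distal}(ii) then forces $\gamma=\id_X$, so every deep covering $z$ lies in $Q_2$. The lower bound $|A_n\cap U_j|/|A_n|\geq 2\epsilon_0/k$ and coordinate-independence yield $|\{z:\{\iota(z_1),\dots,\iota(z_l)\}\neq\{1,\dots,k\}\}|\leq k(1-2\epsilon_0/k)^l|A_n|^l\leq(\delta/4)|A_n|^l$, while the non-deep set contributes at most $(\delta/4)|A_n|^l$, giving $|Q_2|\geq(1-\delta/2)|A_n|^l$. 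The main conceptual obstacle is the temptation to union-bound directly over $B(\Gamma,K^l)$, which is hopeless because that ball grows exponentially in $K^l$; the resolution is that once $z$ is deep, the validity of $\theta_n^l(\gamma)(z)=z$ depends on $\gamma$ only through the finite datum $\{j\leq k:g_{\gamma,U_j}\neq e_G\}$, so it suffices to rule out the $2^k-1$ possible non-empty failure patterns, a count independent of $K$.
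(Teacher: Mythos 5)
Your proposal is correct and takes essentially the same route as the paper's proof: the same choice of $l$ depending only on $k$ and $\epsilon_0$ (not on $K$), a Følner-core (``deep'') set whose $l$-fold product lies in $Q_1$, and for $Q_2$ a union bound over the $k$ partition classes with the estimate $k(1-\epsilon_0/k)^l$, where your covering condition plays exactly the role of the paper's witness sets $E_i$ attached to the classes $\CC_i$. The only cosmetic difference is that you make explicit, via the word-length bound on cocycles from Lemma \ref{lem: distal}, the finite set $V$ of group elements (and hence why a single Følner condition handles all of $B(\Gamma,K^l)$ at once, including the composition step $\gamma_1\gamma_2$), which the paper subsumes in its ``$F_n$ is Følner enough'' requirement.
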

	\begin{proof}
		Let $\delta>0$ be given.   Since the action is distal and has dense free points, for $\Gamma=\langle T\rangle$, Lemma \ref{lem: distal}(ii) implies that there is a clopen partition $\CU=\{U_1, \dots, U_k\}$ of satisfying that for any $\gamma\in \Gamma$ with $\gamma\neq \id_X$, there is a $U\in \CU$ such that there exists a $g_{\gamma}\neq e_G$ such that $\gamma|_U=g_{\gamma}$.
For each $i\leq k$, define 
\[\CC_i=\{\gamma\in \Gamma: \text{there exists }g\neq e_G \text{ such that } \gamma|_{U_i}=g\}.\]
  
As in Proposition \ref{prop: sofic 2}, we choose free $x_i\in U_i$ and define $X_i=\overline{G\cdot x_i}$, which is a minimal set for $\alpha$. Then for each such a restricted action $\alpha$ on $X_i$, there is an $\epsilon_i>0$ such that $\barbelow{D}(X_i\cap U_i)=3\epsilon_i$. Now, we choose an $l>0$ such that $k\cdot \max_{i\leq k}\cdot(1-\epsilon_i/k)^l<\delta/2$. Let $K\in \N$ and denote by $S=B(\Gamma, K^l)$ for simplicity. Choose a small $\epsilon>0$ such that $\epsilon\leq\min\{\epsilon_i: 1\leq i\leq k\}$ and $1-(1-\epsilon)^l\leq \delta/2$. Then choose $N>0$ big enough such that for any $n>N$, $1\leq i\leq k$ and free $x\in X$, one has
		\[|\bigcap_{\gamma\in S^2\cup S}\gamma^{-1}(F_nx)\cap F_nx|\geq (1-\epsilon)|F_nx|\]
		and
		\[|F_nx_i\cap U_{i}|\geq (\barbelow{D}(U_{i}\cap X_i)-\epsilon)|F_nx_i|\geq 2\epsilon_i|F_nx_i|.\]
		Recall $A_n=\bigsqcup_{i\leq k} F_nx_i$ and maps $\Theta^i_n: \Gamma\to \sym(F_nx_i)$ and $\Theta_n: \Gamma\to \sym(A_n)$ defined in Proposition \ref{prop: sofic 2} as well as maps $\theta^l_n$ defined in Remark \ref{rem: sofic arppox subgroup}.

		Now, for $n>N$ and $i\leq k$, define \[P_{i}=\{z\in (A_n)^l: \theta^l_n(\gamma)(z)\neq z\ \text{for any }\gamma\in B(\Gamma, K^l)\setminus \{\id_X\}\ \text{and }\gamma\in \CC_i\}\]
		and observe that $Q_2=\bigcap_{i\leq k}P_{i}$ because $\Gamma\setminus\{\id_X\}=\bigcup_{i\leq k}\CC_i$. Now, define 
		\[B_{i}=(\bigcap_{\gamma\in S\cap \CC_i}\gamma^{-1}(F_nx_i))\cap (F_nx_i)\cap U_{i}\] and it is not hard to see $|B_{i}|\geq (2\epsilon_i-\epsilon)|F_nx_i|\geq \epsilon_i|F_nx_i|$. Moreover,
  note that any $y\in B_i$ is a free point for $\alpha$. This implies that for any $\gamma\in S\cap \CC_i$ and $y\in B_i$, one has
  \[\Theta_n(\gamma)(y)=\Theta_n^i(\gamma)(y)=\gamma(y)\neq y\]
  because $\gamma|_{U_i}=g_{\gamma}$ for some $g_{\gamma}\neq e_G$.  
  Then define 
		\[E_{i}=\{y\in F_nx_i: \Theta_n(\gamma)(y)\neq y\ \text{for any }\gamma\in B(\Gamma, K^l)\setminus \{\id_X\}\ \text{and }\gamma\in \CC_i\}.\]
and thus $B_{i}\subset E_{i}$ holds by definition. This entails $|E_{i}|\geq \epsilon_i|F_nx_i|=(\epsilon_i/k)|A_n|$. Then observe $P^c_i\subset(A_n\setminus E_{i})^l$, which implies that $|P^c_{i}|\leq (|A_n|-(\epsilon_i/k)|A_n|)^l=(1-\epsilon_i/k)^l|A_n|^l$ and thus
		\[|Q^c_2|\leq \sum_{i\leq k}|P^c_{i}|\leq k\cdot \max_{i\leq k}(1-\epsilon_i/k)^l|A_n|^l<(\delta/2)|A_n|^l\]
		by our choice of $l$. Thus, one has $|Q_2|\geq (1-\delta/2)|A_n|^l$.
		
		For $Q_1$ and $n>N$, first define 
		\[D=\{y\in A_n: \Theta_n(\gamma_1)\Theta_n(\gamma_2)(y)=\Theta_n(\gamma_1\gamma_2)(y)\ \text{for any }\gamma_1, \gamma_2\in B(\Gamma, K^l)\}.\]
		Then for each $i\leq k$ write $E_i=\bigcap_{\gamma\in S^2\cup S}\gamma^{-1}(F_nx_i)\cap F_nx_i$ for simplicity. Now for any $\gamma_1, \gamma_2\in S$, one has $E_i\subset \gamma_1^{-1}(F_nx_i)\cap \gamma_2^{-1}(F_nx_i)\cap \gamma_2^{-1}\gamma_1^{-1}(F_nx_i)\cap (F_nx_i)$ on which  $\Theta^l_n(\gamma_1)\Theta^l_n(\gamma_2)=\Theta^l_n(\gamma_1\gamma_2)$ holds. This implies that $\bigsqcup_{i\leq k}E_i\subset D$ and thus $|D|\geq (1-\epsilon)|A_n|$. Note that by definition, $D^l\subset Q_1$ and therefore one has 
		\[|Q_1|\geq |D^l|\geq (1-\epsilon)^l|A_n|^l\geq (1-\delta/2)|A_n|^l\]
		by our choice of $\epsilon$.
	\end{proof}

	\begin{prop}\label{prop: hyperfiniteness diagonal product}
		Let $\alpha: G\curvearrowright X$ be a distal action of an amenable group $G$ such that free points are dense. Suppose $T\subset [[\alpha]]$ is a finite symmetric set and $\Gamma=\langle T\rangle$. Let $\CS=\{S_r: r\in \N\}$ and $\{\delta_r: r\in \N\}$ form the sofic approximation graph for $\Gamma$ with the generating graph sequence $\CG=\{G_n: n\in \N\}$ with respect to $T$ as in Remark \ref{rem: sofic arppox subgroup}. Suppose $\CG$ is hyperfinite. Then $\CS$ is also hyperfinite. 
	\end{prop}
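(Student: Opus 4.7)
The strategy is to combine Proposition \ref{prop: finite diagonal product} (the constant-dimension diagonal product case) with the structural information provided by Lemma \ref{lem: sofic enough} and by Lemma \ref{lem: distal}, which says every $\gamma \in \Gamma$ acts locally as an element of a finitely generated amenable subgroup $H \leq G$. Given $\epsilon > 0$, the plan is to tune a small parameter $\delta > 0$ (depending on $\epsilon$, $|T|$, and the dimension control below), invoke the hyperfiniteness of $\CG$ to obtain a uniform constant $K_\delta > 0$ and partitions $\CA_n = \{A^n_i\}$ of $V(G_n)$ with $|A^n_i| \leq K_\delta$ and $|E(\CG)^{\CA}_n| \leq \delta |V(G_n)|$ for large $n$, and then to combine this with a further partition of $V(S_r) = V(G_{n_r})^{l_r}$ compatible with $\CA_{n_r}$ in each coordinate.

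The main obstacle is that the natural product partition $\CB_r = \{\prod_{j=1}^{l_r} A^{n_r}_{i_j}\}$, whose boundary count from Proposition \ref{prop: finite diagonal product} is controlled by an expression of order $|T|((1+\delta)^{l_r} - 1)$, has atoms of size up to $K_\delta^{l_r}$, which is not uniformly bounded when $l_r \to \infty$. To bypass this, I would exploit Lemma \ref{lem: sofic enough} to find a subset $Q_1 \cap Q_2 \subset V(S_r)$ of density at least $1 - \delta$ on which the approximate action $\theta^{l_r}_{n_r}$ is injective and multiplicative on the ball $B(\Gamma, K^{l_r})$. On this good region, each connected component of the $T$-graph of $S_r$ is (up to radius $K^{l_r}$) a copy of a ball in the Cayley graph of $\Gamma$, and because every $\gamma \in \Gamma$ acts locally via $H$ (by Lemma \ref{lem: distal}), this local Cayley picture is governed by the amenable structure of $H$. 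Ornstein-Weiss quasi-tiling of Følner sets of $H$ can then be pulled back, via the cocycle $\gamma \mapsto c(\gamma)(x)$, to a refinement of $\CB_r$ on $Q_1 \cap Q_2$ with pieces whose size depends only on the chosen Ornstein-Weiss tile of $H$, and hence is uniformly bounded in $r$.

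Combining this refinement with singletons on $V(S_r) \setminus (Q_1 \cap Q_2)$ (of density at most $\delta$, contributing only $O(\delta)$ to the overall boundary fraction) should yield a partition of $V(S_r)$ with pieces bounded by some $K_\epsilon$ depending only on $\epsilon$, $|T|$, and the largest Ornstein-Weiss tile needed, with edge-boundary fraction at most $\epsilon$ for all sufficiently large $r$. The technically most delicate point I anticipate is the coherent alignment between the hyperfinite partition $\CA_{n_r}$ of $V(G_{n_r})$ and the Ornstein-Weiss quasi-tiles of $H$ inside each good component of $V(S_r)$: the quasi-tiles must be pulled back to partition pieces of $S_r$ in a way that interacts well with both the diagonal product structure and with $\CA_{n_r}$, without introducing uncontrolled boundary edges. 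This careful combinatorial bookkeeping, together with the verification that the exceptional "bad" region behaves as expected under the diagonal $\Gamma$-action, is, I expect, the core of the argument.
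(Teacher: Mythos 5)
There is a genuine gap at precisely the point your proposal identifies as delicate, and the mechanism you propose to close it is not the one that works. First, your starting object is the full product partition $\CB_r=\{\prod_{j=1}^{l_r}A^{n_r}_{i_j}\}$ over all $l_r$ coordinates; its defect is not only that the atoms have size $K_\delta^{l_r}$, but that its boundary fraction is itself uncontrolled: an edge of $S_r$ crosses atoms as soon as a single coordinate crosses $\CA_{n_r}$, so with a per-coordinate edge fraction bounded below (independent of $r$) and $l_r\to\infty$, the estimate of Proposition \ref{prop: finite diagonal product} of order $|T|((1+\delta)^{l_r}-1)$ is vacuous and in fact almost all edges become boundary edges. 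Refining $\CB_r$ can only add boundary edges, so no refinement of $\CB_r$ -- by Ornstein--Weiss tiles or anything else -- can produce a partition with boundary fraction $\leq\epsilon$. Second, the proposed refinement itself is unsubstantiated: Lemma \ref{lem: sofic enough} gives injectivity and multiplicativity only on the ball $B(\Gamma,K^l)$ for a \emph{fixed} $l=l(\delta,T)$ (your $B(\Gamma,K^{l_r})$ is not what the lemma provides, and cannot be obtained uniformly since $n_r$ and $l_r$ are already fixed by the construction in Remark \ref{rem: sofic arppox subgroup}), and there is no canonical identification of a connected component of $S_r$ with a subset of $H$ along which one could ``pull back'' a quasi-tiling: the map $\gamma\mapsto c(\gamma)(x)$ is neither injective nor a homomorphism at the level needed, and in the paper the Ornstein--Weiss machinery enters only to prove that $\CG$ itself is hyperfinite (Proposition \ref{rmk: Folner sequence hyperfinite}), which here is a hypothesis, not something to be re-run inside $S_r$.

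The idea you are missing is that no further tiling is needed at all. The paper partitions $V(S_r)=A^l\times A^{d_r}$ using $\CA_{n_r}$ only on the \emph{first $l$ coordinates} (with $l$ the fixed integer from Lemma \ref{lem: sofic enough}), taking atoms $C^r_j=B^r_j\times V(H_r)$; this keeps the boundary fraction small by the constant-dimension case, Proposition \ref{prop: finite diagonal product}. It then intersects these atoms with the good sets $U=Q_1\cap Q_2$ (in the first $l$ coordinates) and $U_r$ (from soficity), putting singletons on the complement, and proves the resulting connected pieces $D$ satisfy $|D|\leq K^l$ by a unique-lifting argument: $|\pi_l(D)|\leq K^l$, and if some $z\in\pi_l(D)$ had two lifts in $D$, a shortest path between them would, by pigeonhole on the first $l$ coordinates, produce a word $\gamma=t_1\cdots t_{m'}\in B(\Gamma,K^l)$ fixing a point of $U$, forcing $\gamma=\id_X$ by $Q_2$ and hence equality of the lifts via multiplicativity on $U_r$ and $Q_1$. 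That is the combinatorial step which replaces your quasi-tiling refinement, and it is exactly what makes the unbounded dimension $l_r$ harmless: on the good set, the last $d_r=l_r-l$ coordinates of a component are determined by the first $l$.
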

	\begin{proof}
		Let $\epsilon>0$ and  choose another number $\delta>0$ such that $(1+2|T|)\delta<\epsilon$. Then choose $l\in \N$ exactly obtained as in Lemma \ref{lem: sofic enough}, which depends on $T$ only.

		Then for this $\delta$, since $\CG$ is hyperfinite, there are a $K=K_\delta$ and a partition $\CA_n$ for each $V(G_n)=A_n=\bigsqcup_{i=1}^{k_n}A^n_i$ such that all $|A^n_i|\leq K$ and \[\limsup_{n\to \infty}(|E(\CG)^{\CA}_n|/|A_n|)< \delta/(2^l|T|^2).\] Now, for the $l$ and $K=K_\delta$, Lemma \ref{lem: sofic enough} implies that there is an $ N\in \N$ such that if $n>N$, then the sets
		\[Q_1=\{z\in (A_n)^l: \theta^l_n(\gamma_1)\theta^l_n(\gamma_2)(z)=\theta^l_n(\gamma_1\gamma_2)(z)\ \text{for any }\gamma_1, \gamma_2\in B(\Gamma, K^l)\} \]
		and 
		\[Q_2=\{z\in (A_n)^l: \theta^l_n(\gamma)(z)\neq z \ \text{for any }\gamma\in B(\Gamma, K^l)\setminus \{\id_X\} \}\]
		satisfy $|Q_1|, |Q_2|\geq (1-\delta/2)|A_n|^l$. Now, define $U=Q_1\cap Q_2$, whose cardinality $|U|\geq (1-\delta)|A_n|^l$.
		
		Moreover, for $B(\Gamma, K^l)$, there is an $N'>0$ such that whenever $n_r>N'$, there is a $U_r\subset V(S_r)$ with $|U_r|\geq (1-\delta_r)|V(S_r)|$ on which for any $\gamma_1, \gamma_2\in B(\Gamma, K^l)$ and $z\in U_r$ one also has 
		\[\theta^{l_r}_{n_r}(\gamma_1)\theta^{l_r}_{n_r}(\gamma_2)(z)=\theta^{l_r}_{n_r}(\gamma_1\gamma_2)(z).\]
		Consider an additional graph sequence $\CS'=\{S'_r: r\in \N\}$ where each $V(S'_r)=A_{n_r}^l$ is the ``projection'' of $V(S_r)$ to its first $l$ coordinates. Then the proof of Proposition \ref{prop: finite diagonal product} implies that there is a partition $\CB_r$ for $V(S'_r)$ by $V(S'_r)=\bigsqcup_{j\in J_r}B^r_j$ with $|B^r_j|\leq K^l$ and $\limsup_{r\to \infty}(|E(\CS')^\CB_r|/|V(S'_r)|)< \delta/|T|$. This implies that there is an $N''$ such that  if $n_r>N''$, then 
		$|E(\CS')^\CB_r|/|V(S'_r)|< \delta/|T|$.
		
		Now, recall each $V(S_r)=A_{n_r}^{l_r}$.  Let $r\in\N$ be large enough such that $n_r>\max\{N, N', N''\}$ and $l_r>l$. We write $A=A_{n_r}$ and $d_r=l_r-l$ for simplicity. Moreover, we denote by $\CH=\{H_r: r\in \N\}$ the graph sequence $V(H_r)=A^{d_r}$ and $E(H_r)$ is described in the same way as in Remark \ref{rem: sofic arppox subgroup}.
		Then one may decompose $V(S_r)$ as $A^l\times A^{d_r}$, which yields a partition $\CC_r$ of $V(S_r)$ by $V(S_r)=\bigsqcup_{j\in J_r}C^r_j$ where $C^r_j=B^r_j\times V(H_r)$. Then recall the notation in Remark \ref{rmk: hyperfinite-connected component}(1) that $\partial C^r_j$ denotes the set of edges with exact one end in $C^r_j$ and observe that $\partial C^r_j\subset \partial B^r_j\times E(H_r)$ and thus $E(\CS)^\CC_r\subset\bigcup_{j\in J_r} (\partial B^i_r\times E(H_r))=E(\CS')^\CB_r\times E(H_r)$, which also implies that \[|E(\CS)^{\CC}_r|\leq |E(\CS')^\CB_r|\cdot |T|\cdot |A|^{d_r}\] 
		by Remark \ref{rmk: edge number of product}. Thus, one has
		\[\frac{|E(\CS)^\CC_r|}{|V(S_r)|}\leq \frac{|E(\CS')^\CB_r|\cdot |T|\cdot |A|^{d_r}}{|A|^{l_r}}=\frac{|E(\CS')^\CB_r|\cdot |T|}{|A|^l}<\delta.\]
		
		Now we look at another partition $\CD_r$ of $V(S_r)$ by setting that $\CD_r$ consists of all $D^r_j=C^r_j\cap (U\times V(H_r))\cap U_r$ for $j\in J_r$ and all $\{z\}$ with $z\in (U^c\times V(H_r))\cup U^c_r$. Then observe that \[E(\CS)^\CD_r\subset E(\CS)^\CC_r\cup \bigcup_{z\in U^c\times V(H_r)}B_1(z)\cup \bigcup_{z\in U^c_r}B_1(z)\]
 which implies
		$|E(\CS)^\CD_r|\leq |E(\CS)^\CC_r|+2|T|\cdot|U^c\times V(H_r)|+2|T|\cdot|U^c_r|$ because each $S_r$ is of uniform bounded degree $2|T|$ by Remark \ref{rem: sofic arppox subgroup}. Therefore, one has
		\begin{align*}
		\frac{|E(\CS)^\CD_r|}{|V(S_r)|}&\leq \frac{|E(\CS)^\CC_r|}{|V(S_r)|}+\frac{2|T|\cdot |U^c|\cdot |V(H_r)|}{|V(S_r)|}+ \frac{2|T|\cdot |U^c_r|}{|V(S_r)|}\\
		&< \delta+\frac{2|T|\cdot \delta|A_{n_r}|^{l_r}}{|V(S_r)|}+\frac{2|T|\cdot\delta_r|V(S_r)|}{|V(S_r)|}=(1+2|T|)\delta+2|T|\delta_r.
		\end{align*}
		
		However, the cardinality of each member in $\CD_r$ is not uniformly bounded, on which we will address now. Simply further decompose each $D^r_j$ into its connected components for all $j\in J_r$. Note that this operation does not add any extra edges to the original $E(\CS)^\CD_r$, and we still denote by this new partition $\CD_r$ for simplicity. Now it suffices to show all members in the new $\CD_r$ have a uniform bound on the cardinality.
		
		Indeed, for each $D\in \CD_r$ which is not a singleton, its projection on the first $l$ coordinates, denoted by $\pi_l(D)$, satisfies $\pi_l(D)\subset U\cap B^r_j$ for some $j\in J_r$. This implies that $|\pi_l(D)|\leq K^l$. We now claim that for any $z\in \pi_l(D)$, there is a unique $y\in V(H_r)$ such that $(z, y)\in D$. Suppose not. Then for some $z\in \pi_l(D)$, there exists two distinct $y\neq y'\in V(H_r)$ such that both $(z, y)$ and $(z, y')$ are in $D$, which is connected. This implies that there exists a (non-oriented) path 
      \[P=\{(z_i, y_i)\in D: i=0,\dots, m\}\]
      of the shortest length with $(z_0, y_0)=(z, y)$ and $(z_m, y_m)=(z, y')$ such that there is an edge of $S_r$ joining $(z_i, y_i)$ and $(z_{i+1}, y_{i+1})$ for any $i=0, \dots, m-1$, i.e., there exists a $t\in T$ such that $(z_{i+1}, y_{i+1})=\theta^{l_r}_{n_r}(t)(z_i,y_i)$ or  $(z_{i+1}, y_{i+1})=\theta^{l_r}_{n_r}(t)^{-1}(z_{i},y_{i})$. For the second possibility, since $(z_i, y_i), (z_{i+1}, y_{i+1})\in D\subset U_r$, one has 
      \[(z_{i+1}, y_{i+1})=\theta^{l_r}_{n_r}(t)^{-1}(z_{i},y_{i})=\theta^{l_r}_{n_r}(t^{-1})(z_{i},y_{i}).\]
      Therefore, for any $i=0,\dots, m-1$, we may always assume
      $(z_{i+1}, y_{i+1})=\theta^{l_r}_{n_r}(t)(z_i,y_i)$ for some $t\in T$ because $T$ is symmetric.    
      
Now, suppose the length $\ell(P)$ of $P$, defined to be the number of the edges used in the path $P$, is strictly greater than $K^l$, i.e. $\ell(P)=m>K^l$. Then because $|\pi_l(D)|\leq K^l$, besides $z_0=z_m=z$, there has to be another pair of vertices $(z_i, y_i), (z_j, y_j)\in P$ such that $i\neq j$ but $z_i=z_j$. In this case, note that $y_i\neq y_j$ holds necessarily because the length of $P$ is the shortest. This implies that the set
\[\CP=\{((z_i, y_i), (z_j, y_j))\in P\times P: 0\leq i<j\leq m, (i, j)\neq (0, m), z_i=z_j, y_i\neq y_j\}\]
is not empty. Let $(z_{i_0}, y_{i_0}), (z_{j_0}, y_{j_0})\in \CP$ be a pair with the shortest distance. Because $|\pi_l(D)|\leq K^l$, there are at most $K^l$ possible different $z_i$ appearing in vertices in $P$, which implies that the distance between $(z_{i_0}, y_{i_0})$ and $(z_{j_0}, y_{j_0})$ in $P$ is at most $K^l$. We denote by $m'\leq K^l$ this distance. Using the fact that both
$(z_{i_0}, y_{i_0}), (z_{j_0}, y_{j_0})\in U_r$, there are $t_1, \dots, t_{m'}\in T$ such that \[\theta^l_{n_r}(t_1\dots t_{m'})(z_{i_0})=z_{j_0}\] and 
\[\theta^{d_r}_{n_r}(t_1\dots t_{m'})(y_{i_0})=y_{j_0}.\] On the other hand, the fact $z_{i_0}=z_{j_0}\in U$ implies that $\gamma=t_1\dots t_{m'}=\id_X$ since $\gamma\in B(\Gamma, K^l)$. This further implies that $y_{i_0}=y_{j_0}$, which is a contradiction. Therefore, the length $\ell(P)$ of $P$ has to satisfy  $\ell(P)=m\leq K^l$. 

Then, apply the same argument to the pair $(z, y)=(z_0, y_0)$ and $(z, y')=(z_m, y_m)$ shows that $y=y'$, which means there is only one $y\in V(H_r)$ such that $(z, y)$ in $D$. Therefore, one has $|D|\leq K^l$ whenever $D\in \CD_r$ and $|D|>1$.
		
		Recall that if $r$ is large enough such that $n_r>\max\{N, N', N''\}$, one already has
		\[\frac{|E(\CS)^\CD_r|}{|V(S_r)|}< (1+2|T|)\delta+2|T|\delta_r,\]
		which implies that
		\[\limsup_{r\to \infty}\frac{|E(\CS)^\CD_r|}{|V(S_r)|}\leq (1+2|T|)\delta<\epsilon\]
		by our choice of $\delta$. This shows that the graph sequence $\CS$ is hyperfinite.
	\end{proof}

	\begin{thm}\label{thm: equi-amenable}
		Let $\alpha: G\curvearrowright X$ be a distal action of an infinite countable discrete group $G$ on the Cantor set $X$ such that free points are dense. Then $[[\alpha]]$ is amenable if and only if $G$ is amenable.
	\end{thm}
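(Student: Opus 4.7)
The plan is to handle the two implications separately. For the only-if direction, the density of free points makes $g\mapsto\alpha_g$ a faithful homomorphism $G\to[[\alpha]]$, since any non-trivial $g\in G$ must move any free point (cf.\ Remark \ref{rem: unique cocycle}); hence $G$ embeds as a subgroup of $[[\alpha]]$ and inherits amenability. The substantive content lies in the converse.

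For the if direction, I would reduce to proving that every finitely generated subgroup $\Gamma=\langle T\rangle$ of $[[\alpha]]$ is amenable. By Remark \ref{rem: sofic arppox subgroup} (synthesizing Proposition \ref{prop: sofic 2} with Proposition \ref{prop: compressed sofic representation}), such a $\Gamma$ admits a sofic approximation graph sequence $\CS=\{S_r=G_{n_r}^{l_r}\}$ realized as a diagonal product of a proper $T$-labeled ``generating'' graph sequence $\CG=\{G_n\}$ whose vertex sets are $A_n=\bigsqcup_{i\leq k}F_nx_i$, where $\{F_n\}$ is a F{\o}lner sequence of $G$ and the free points $x_i$ are chosen so that $\overline{Gx_i}$ is a minimal set (this is where distality enters, via almost periodicity). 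Theorem \ref{thm: amenability vs hyperfinite} reduces the problem to showing that $\CS$ is hyperfinite, and Proposition \ref{prop: hyperfiniteness diagonal product} reduces this further to showing that $\CG$ is hyperfinite. A final application of Proposition \ref{prop: disjoint union graph sequence} isolates the core task: prove that each sequence $\{F_nx_i\}_n$ is hyperfinite.

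This last step is the heart of the argument. Since $x_i$ is free, the bijection $F_n\to F_nx_i$, $s\mapsto sx_i$, transports the graph on $F_nx_i$ induced by $\Theta_n(t)$, $t\in T$, onto a graph on $F_n$. On the ``interior'' where $sx_i\in t^{-1}(F_nx_i)$, the edge $(sx_i,\Theta_n(t)(sx_i))$ becomes the Cayley-type edge $(s,c(t)(sx_i)\,s)$ with multiplier in the finite set $S=\bigcup_{t\in T}\operatorname{ran}(c(t))\subset G$ supplied by Lemma \ref{lem: distal}; crucially, distality forces \emph{every} $\gamma\in\Gamma$ (not only the generators) to act on a single fixed finite clopen partition by elements of the finitely generated equicontinuous subgroup $H=\langle S\rangle$, so $S$ controls the whole argument. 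The remaining ``boundary'' edges, defined by the arbitrary bijections $\rho^i_{t,n}$, touch only $O(|\partial_S^-F_n|)$ vertices. Proposition \ref{rmk: Folner sequence hyperfinite} (Ornstein--Weiss quasi-tiling for the amenable $G$) yields that $\{F_n\}$ with $S$-Cayley edges is hyperfinite; Proposition \ref{prop: hyperfinite subsequence} transfers this to the interior subgraph of $\{F_nx_i\}_n$; and Proposition \ref{prop: shrinking and enlarge hyperfinite} absorbs the boundary perturbation using the uniform degree bound $2|T|$ for the graphs in $\CG$ recorded in Remark \ref{rem: sofic arppox subgroup}.

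The main obstacle I anticipate is the boundary bookkeeping in the last step: one must confirm that the discrepancy between the actual $\Theta_n$-edges and the Cayley edges contributes only a vanishing fraction $O(|\partial_SF_n|/|F_n|)$ of extra cross-partition edges, uniformly across the $k$ pieces $F_nx_i$ of $A_n$, so that the partition witnessing hyperfiniteness of the Cayley sequence still works for $\CG$. Once this is in place, the hyperfiniteness chain $\{F_nx_i\}_n\Rightarrow\CG\Rightarrow\CS$ closes, Theorem \ref{thm: amenability vs hyperfinite} yields amenability of $\Gamma$, and taking a union over all finitely generated $\Gamma\leq[[\alpha]]$ delivers amenability of $[[\alpha]]$.
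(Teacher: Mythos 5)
Your proposal is correct and follows essentially the same route as the paper: reduce to finitely generated $\Gamma=\langle T\rangle$, use the sofic approximation of Proposition \ref{prop: sofic 2} together with Theorem \ref{thm: amenability vs hyperfinite} and Proposition \ref{prop: hyperfiniteness diagonal product} to reduce to hyperfiniteness of the generating sequence $\CG$ on $A_n=\bigsqcup_i F_nx_i$, and establish that via Ornstein--Weiss hyperfiniteness of the $S$-labeled F{\o}lner graphs (Proposition \ref{rmk: Folner sequence hyperfinite}) transported to $F_nx_i$ by freeness, with Propositions \ref{prop: hyperfinite subsequence}, \ref{prop: shrinking and enlarge hyperfinite} and \ref{prop: disjoint union graph sequence} handling the comparison of $\Theta_n$-edges with the action edges and the boundary discrepancy. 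The boundary bookkeeping you flag is resolved in the paper exactly as you suggest, by noting the two graphs coincide on $M_nx_i$ with $|M_nx_i|\geq(1-\epsilon_n)|F_nx_i|$ and invoking the degree bound $2|T|$.
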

	\begin{proof}
		Suppose $G$ is amenable. Let $T$ be a finite symmetric set in $[[\alpha]]$. Since $\alpha$ is free, Remark \ref{rem: unique cocycle} shows that there exists a finite symmetric $S\subset G$ such that for any free $x\in X$ and $\varphi\in T$ there is a (unique) $g\in S$ such that $\varphi(x)=g\cdot x$. 
		
		Now, let $\CF=\{F_n: n\in \N\}$ be a F{\o}lner sequence of $G$ and $\{\epsilon_n>0: n\in\N\}$ a decreasing sequence converging to zero such that 
		\[|\bigcap_{g\in S\cup\{e_G\}}gF_n|\geq (1-\epsilon_n)|F_n|\]
		holds for any $n\in \N$.
		Equipped with edges induced by $S$ for all $F_n\in \CF$, one has $\CF$ is hyperfinite by Proposition \ref{rmk: Folner sequence hyperfinite}. Then for a free $x\in X$, this implies that the graph sequence $\CF(x)=\{F_nx: n\in \N\}$ is hyperfinite in which the edges in $F_nx$ are of the form $(y, gy)$ where $g\in S$. Then copy all edges $(y, gy)$ on $F_nx$ for $2|T|$ times so that we obtain a new graph sequence $\CF'(x)=\{D_n: n\in \N\}$ in which the vertex set $V(D_n)=F_nx$ but the edge sets $E(D_n)$ has been enlarged accordingly. Note that $\CF'(x)$ is still hyperfinite by the construction.
		
		To apply Proposition \ref{prop: hyperfiniteness diagonal product}, it suffices to show the graph sequence $\CG=\{G_n: n\in \N\}$ is hyperfinite, where $V(G_n)=A_n=\bigsqcup_{i\leq k}F_nx_i$ defined in Proposition \ref{prop: sofic 2} with a graph structure described in Remark \ref{rem: sofic arppox subgroup}.
		To this end, first, the graph sequence $\CH_i=\{H^i_n: n\in\N\}$ given by $V(H^i_n)=F_nx_i$ and  $E(H^i_n)=\{(y, \varphi(y)): y\in F_nx_i, \varphi\in T\}$, is hyperfinite by applying Proposition \ref{prop: hyperfinite subsequence} for $\CH_i$ and $\CF'(x_i)$ because $E(H_n^i)\subset E(D_n)$.
		Now define $\CG_i=\{G^i_n: n\in \N\}$ by setting $V(G^i_n)=F_nx_i$ and $E(G^i_n)=\{(y, \Theta^i_n(\varphi)(y)): y\in F_nx_i, \varphi\in T\}$, where $\Theta^i_n$ is defined in Proposition \ref{prop: sofic 2}. Write $M_n=\bigcap_{g\in S\cup\{e_G\}}gF_n$ for simplicity. Now, observe that $G^i_n$ and $H^i_n$ coincide when we restricted them on $M_nx_i\subset F_nx_i$, which satisfies $|M_nx_i|\geq (1-\epsilon_n)|F_nx_i|$. Then Proposition \ref{prop: shrinking and enlarge hyperfinite} implies that each $\CG_i$ is hyperfinite.

		Finally, the graph sequence $\CG$ is a disjoint union of all $\CG_i$ for $1\leq i\leq k$. Therefore, Proposition \ref{prop: disjoint union graph sequence} shows that $\CG$ is also hyperfinite. This implies that $\Gamma$ is amenable by Theorem \ref{thm: amenability vs hyperfinite} and thus $[[\alpha]]$ is amenable as well.
				For the converse, if $[[\alpha]]$ is amenable then $G\leq [[\alpha]]$ is amenable.
	\end{proof}

Let $\pi: (X, G)\to (Y, G)$ be an extension between Cantor dynamical systems. The extension $\pi$ is called \textit{distal} if, whenever $x, y\in X$ satisfy $\pi(x)=\pi(y)$ and $\inf_{g\in G}d(gx, gy)=0$, then $x=y$ (see, e.g., \cite[Section 3.12]{Bro}).  It is direct to see that a system as a distal extension of a distal system is still distal. We remark that the distal extension is a more general concept than \textit{equicontinuous extension} whose definition can be found in \cite[Definition 7.29]{K-L}.

On the other hand, let $\pi: (Z, G)\to (X, G)$ be an extension of two Cantor systems. Suppose the free points in $X$ are dense. Then so are free points in $Z$. Moreover, note that the product system of two distal systems is still distal. Therefore, we have the following corollary.

 \begin{cor}\label{cor: amenable topo full groups}
   Let $\alpha: G\curvearrowright X$ and $\beta: H\curvearrowright X$ be two distal actions of amenable groups $G, H$ on the Cantor set $X$ such that the free points for both $\alpha$ and $\beta$ are dense.  Let  $\gamma: G\curvearrowright Z$ be another Cantor system and $\pi: Z\to X$ be a distal extension. Suppose $G$ is infinite.   
   Then, $[[\gamma]]$ and $[[\alpha\times \beta]]$ are amenable.   In particular, this applies to the case that $Z=X\times Y$ and $\gamma(g)(x, y)=(\alpha(g)x, y)$.
 \end{cor}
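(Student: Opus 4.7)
The plan is to observe that both $\gamma$ and $\alpha\times\beta$ satisfy the hypotheses of Theorem \ref{thm: equi-amenable}, and then invoke that theorem. The three conditions to verify in each case are: the acting group is amenable and infinite; the action is distal; and the set of free points is dense.

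For $[[\gamma]]$, the acting group $G$ is amenable and infinite by assumption. Distality of $\gamma$ follows from the general fact recorded right before the corollary statement, namely that a distal extension of a distal system is itself distal. For density of free points, I would first note that preimages of free points are free: if $\pi(z)=x$ with $x$ free for $\alpha$ and $g\cdot z=z$, then $g\cdot x=\pi(g\cdot z)=\pi(z)=x$, forcing $g=e_G$. Density of free points in $Z$ then follows from the fact recorded in the same passage that under any extension of Cantor systems, density of free points passes from the base to the total space. Theorem \ref{thm: equi-amenable} then applies and yields amenability of $[[\gamma]]$.

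For $[[\alpha\times\beta]]$, the acting group $G\times H$ is amenable as a product of amenable groups and is infinite since $G$ is. Distality is preserved under products (as noted before the corollary). A point $(x,y)\in X\times X$ is free for $\alpha\times\beta$ precisely when $x$ is free for $\alpha$ and $y$ is free for $\beta$, since $(g,h)(x,y)=(x,y)$ forces $gx=x$ and $hy=y$ separately. The product of two dense subsets of $X$ is dense in $X\times X$, so the set of free points for $\alpha\times\beta$ is dense. Theorem \ref{thm: equi-amenable} then applies and yields amenability of $[[\alpha\times\beta]]$.

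Finally, for the special case $Z=X\times Y$ with $\gamma(g)(x,y)=(\alpha(g)x,y)$, the only thing to check is that the projection $\pi:X\times Y\to X$ is a distal extension, at which point the first part applies. Two points $z_i=(x,y_i)$ with a common image satisfy $g z_1=(gx,y_1)$ and $g z_2=(gx,y_2)$, so in any reasonable product metric $d(gz_1,gz_2)\geq d_Y(y_1,y_2)$ for every $g\in G$; hence $\inf_{g\in G} d(gz_1,gz_2)=0$ forces $y_1=y_2$ and thus $z_1=z_2$. No serious obstacle is anticipated, since the corollary is essentially a packaging of closure properties (distality and density of free points are inherited by distal extensions and by products) together with Theorem \ref{thm: equi-amenable}.
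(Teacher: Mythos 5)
Your proposal is correct and takes essentially the same route as the paper: the corollary is proved there precisely by the closure properties recorded in the preceding paragraph (a distal extension of a distal system is distal, products of distal systems are distal, and density of free points passes from the base to the extension and to products) combined with Theorem \ref{thm: equi-amenable}. You in fact supply a bit more detail than the paper does (preimages of free points are free, freeness in a product splits coordinatewise, and the explicit product-metric check that the projection $X\times Y\to X$ is a distal extension), but the argument is the same.
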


Another consequence of Theorem \ref{thm: equi-amenable} is identifying F{\o}lner sets for $[[\alpha]]$ in the hyperfinite sofic approximation. This also makes it possible to estimate F{\o}lner functions for finitely generated subgroups of $[[\alpha]]$. In general, let $\Gamma$ be a finitely generated amenable group generated by $S$. We define the \textit{F{\o}lner function} for  $\Gamma$ with respect to a finite set $S$ is $\fol_{\Gamma, S}(\epsilon)=\min\{|F|: F\subset \Gamma, |\partial_S^- F|\leq \epsilon|F|\}$. In the case that $\epsilon=1/n$ for some $n\in \N$, We also write $\fol_{\Gamma, S}(n)$ for $\fol_{\Gamma, S}(1/n)$ for simplicity.

\begin{rmk}\label{rmk: Folner function}    
Let $\Gamma$ be a finitely generated amenable group with a finite generator set 
$T$. Suppose $\CS=\{S_n\}$ is a hyperfinite sofic approximation graph sequence for $\Gamma$. Then Definition \ref{defn: hyperfinite} implies that for any $\epsilon>0$ there exists a $K_\epsilon>0$ and a sequence of partitions of the vertex sets $V(S_n)=\bigsqcup_{i=1}^{k_n}A^n_i$ such that
		\begin{enumerate}[label=(\roman*)]
			\item $|A^n_i|\leq K_\epsilon$ for any $n\geq 1$ and $1\leq i\leq k_n$.
			\item If $E_n^{\epsilon}$ is the set of edges $(x, y)\in E(S_n)$ such that $x\in A^n_i$, $y\in A^n_j$ for $i\neq j$ then 
			$\limsup_{n\to \infty}(|E^{\epsilon}_n|/|V(S_n)|)< \epsilon$.
		\end{enumerate}  
  Then the proof of \cite[Theorem 6.2]{Ka} implies $\fol_{\Gamma, T}(\epsilon)\leq K_\epsilon$.
  \end{rmk}

If the F{\o}lner set of the acting group $G$ could be tiled with explicit tiles, e.g., $G=\bigoplus_{i\in I}\Z$, where $I$ is a countable index set, then we have an explicit bound for the F{\o}lner functions for finitely generated subgroups of $[[\alpha]]$.

\begin{thm}\label{thm: folner function 3}    
Let $I$ be a countable index set (could be finite) and $\alpha: \bigoplus_{i\in I}\Z\curvearrowright X$  a distal action on the Cantor set $X$ such that free points are dense.
     Let $T$ be a finite symmetric subset of $[[\alpha]]$ and denote by $\Gamma=\langle T\rangle\leq [[\alpha]]$. Then there are integers  $l, m, d, C\in \N_+$ depending only on $T$ such that for any $\epsilon>0$,  one has \[\fol_{\Gamma, T}(\epsilon)\leq \ceil{\frac{m}{1-(1-\epsilon/C)^{1/d}}}^{dl}.\]
As a consequence, $\Gamma$ is of polynomial growth.
\end{thm}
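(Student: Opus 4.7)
The plan is to follow the construction of the hyperfinite sofic approximation for $\Gamma$ used in the proof of Theorem \ref{thm: equi-amenable}, make every step quantitative for the concrete group $G=\bigoplus_{i\in I}\Z$, and then read off the F{\o}lner function estimate via Remark \ref{rmk: Folner function}.

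First, I would apply Lemma \ref{lem: distal} to the given finite symmetric $T\subset [[\alpha]]$ to obtain a finite symmetric $S\subset G$ such that $\Gamma=\langle T\rangle$ acts piecewise by translations from $H:=\langle S\rangle$ on a fixed finite clopen partition of $X$. Because $G$ is torsion-free abelian and $S$ is finite, $H\cong\Z^d$ for an integer $d$ bounded by $|S|$; this $d$ depends only on $T$. I would also fix the integer $l$ extracted from Lemma \ref{lem: sofic enough} applied to $T$, which controls the diagonal-product step of the sofic approximation and again depends only on $T$.

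Second, I would quantify the hyperfiniteness of the orbit graph sequence $\CG=\{G_n\}$ from Remark \ref{rem: sofic arppox subgroup}, now built over $H\cong\Z^d$. Take the F{\o}lner sequence in $H$ to be cubes $F_N=[-N,N]^d$, and partition each orbit $F_N\cdot x_i$ into translates of sub-cubes of side $s$. An $S$-labeled edge $(x, x+g)$ crosses the partition only if $x$ lies within $\|g\|_\infty$ of a sub-cube face, so a direct count gives a crossing fraction of at most
\[
1-\bigl(1-\tfrac{m}{s}\bigr)^d,
\]
where $m=2\max_{g\in S}\|g\|_\infty$ depends only on $T$. Requiring this to be at most $\delta$ forces $s\geq m/(1-(1-\delta)^{1/d})$, so each cell has cardinality at most $\ceil{m/(1-(1-\delta)^{1/d})}^d$. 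The disjoint-union step of Proposition \ref{prop: disjoint union graph sequence} and the shrinking/enlarging step of Proposition \ref{prop: shrinking and enlarge hyperfinite} used in Theorem \ref{thm: equi-amenable} inflate $m$ by a constant factor and rescale $\delta$, but the same form is retained after absorbing everything into fresh constants $m, C$ depending only on $T$.

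Third, I would apply Proposition \ref{prop: hyperfiniteness diagonal product} to the diagonal product $\CS=\{(A_{n_r})^{l}\}$. Its connected-component argument shows that every non-singleton cell of the resulting partition of $V(S_r)$ has cardinality at most $K_\delta^l$, where $K_\delta$ is the base cell bound for $\CG$, provided $\delta=\epsilon/C$ for a constant $C=C(T)$ from that proof (it equals $(1+2|T|)\cdot 2^l|T|^2$ up to absorption). Substituting the cube-tiling bound from the previous step and invoking Remark \ref{rmk: Folner function} yields
\[
\fol_{\Gamma, T}(\epsilon)\leq K_{\epsilon/C}^l\leq \ceil{\frac{m}{1-(1-\epsilon/C)^{1/d}}}^{dl},
\]
which is precisely the asserted bound.

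Finally, for the polynomial growth conclusion, I would observe that convexity of $x\mapsto(1-x)^{1/d}$ on $[0,1]$ gives $1-(1-\epsilon/C)^{1/d}\geq \epsilon/(Cd)$, so the bound above is $O(\epsilon^{-dl})$; that is, $\fol_{\Gamma,T}$ is polynomial. By the classical fact that a finitely generated group with polynomial F{\o}lner function has polynomial volume growth (via Coulhon--Saloff-Coste isoperimetric/heat-kernel bounds combined with Varopoulos' theorem for random walks on groups, or equivalently via Gromov--Erschler), $\Gamma$ has polynomial growth.

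\textbf{Main obstacle.} The conceptually transparent step is that $H\cong\Z^d$ and cubes provide sharp F{\o}lner sets with explicit tiling control. The hard part is accurate constant bookkeeping through Proposition \ref{prop: hyperfiniteness diagonal product}: one must verify that the connected-component injectivity argument keeps the cell bound at exactly $K_\delta^l$ (not inflated by $l$-dependent or logarithmic factors), that the freeness/almost-action corrections from Lemma \ref{lem: sofic enough} absorb into $C$ without forcing $l$ to depend on $\epsilon$, and that all constants swept into $m, C$ depend only on $T$ and not on $N$.
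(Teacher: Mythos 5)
Your proposal is correct and takes essentially the same route as the paper's proof: extract a finite symmetric $S\subset\Z^d$ and the integer $l$ from Lemma \ref{lem: sofic enough}, tile the cube F{\o}lner sets of $\Z^d$ by sub-cubes of side roughly $m/(1-(1-\epsilon/C)^{1/d})$ to get the quantitative cell bound, push it through the constructions of Proposition \ref{prop: hyperfiniteness diagonal product} and Theorem \ref{thm: equi-amenable}, and read off $\fol_{\Gamma,T}(\epsilon)\leq K_{\epsilon/C}^l$ via Remark \ref{rmk: Folner function}. The only cosmetic divergence is the last step, where the paper simply cites that the growth function is dominated by the F{\o}lner function, whereas you invoke the heavier Coulhon--Saloff-Coste/Varopoulos route to the same conclusion.
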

\begin{proof}
Remark \ref{rem: unique cocycle} shows that there exists a finite symmetric $S\subset G$ containing $e_G$ such that for any free $x\in X$ and $\varphi\in T$ there is a (unique) $g\in S$ such that $\varphi(x)=g\cdot x$. 
Without loss of any generality, we may assume there is an integer $d\geq 1$ such that $S\subset \Z^d$.
Let $l\in \N_+$, depending on $T$ only, be obtained as in Lemma \ref{lem: sofic enough}. Denote by $I_n=([1, n]\cap \Z)^d$ for any $n\in \N_+$, which forms a F{\o}lner sequence $\CF=\{I_n: n\in \N_+\}$ for $\Z^d$. Define
\[m=\max\{\|\vec{n}\|_\infty: \vec{n}=(n_1, \dots, n_d)\in S\}.\]
Choose
$C=2\cdot|S|\cdot(1+2|T|)\cdot 2^l|T|^2$
and $k=\ceil{\frac{2m}{1-(1-\epsilon/C))^{1/d}}}$. The choice of $k$ implies that 
\[|\partial^-_{S}(I_k+c)|\leq (\epsilon/C)\cdot|I_k|\]
for any $c\in \Z^d$. In addition, for any $n\geq \ceil{\frac{k}{1-(1-\epsilon/C)^{1/d}}}$, observe that $I_n$ can be $\epsilon/C$-quasitiled by only one tile $I_k$. Then, the same argument in Proposition \ref{rmk: Folner sequence hyperfinite} shows that the partition $\CF_n$, consisting of certain shifts of $I_k$, for $I_n$ entails that 
\[\limsup_{n\to \infty}\frac{|E(\CF)^\CA_n|}{|I_n|}< \frac{\epsilon}{(1+2|T|)\cdot 2^l|T|^2}.\]
Then the construction in   Proposition \ref{prop: hyperfiniteness diagonal product} and Theorem \ref{thm: equi-amenable} yields proper sofic approximation sequence $\{S_n: n\in \N\}$ for $\Gamma$ satisfying conditions (i) and (ii) in Remark \ref{rmk: Folner function}, from which one has 
\[\fol_{\Gamma, T}(\epsilon)\leq |I_k|^l=\ceil{\frac{2m}{1-(1-\epsilon/C)^{1/d}}}^{dl}.\]
Now, we plug in $\epsilon=1/n$ and basic calculus implies that $\fol_{\Gamma, T}(n)$ is dominated by polynomial $n\mapsto n^{dl}$. Moreover, since the growth function of a finitely generated amenable group is dominated by its F{\o}lner function (see, e.g., \cite[Proposition 14.100]{C-M}), one has that $\Gamma$ is of polynomial growth.
\end{proof}

We remark that the same method in Theorem \ref{thm: folner function 3} also applies to acting groups whose F{\o}lner set could be tiled explicitly. However, for general amenable groups, we have to apply the Ornstein-Weiss quasi-tiling process to produce tiles as in Proposition \ref{rmk: Folner sequence hyperfinite} in a recursive way. To address this situation, we obtain a bound for F{\o}lner functions using recursive functions in Theorem \ref{thm: folner function} and \ref{thm: Folner function 2} in the appendix.



\section{Non-amenability of  topological full groups}
In this section, we establish Theorem \ref{thm: main 3}. It was demonstrated in \cite{E-M} a minimal free $\Z^2$-subshift $\alpha$ whose topological full group $[[\alpha]]$ contains a copy of $\Z_2*\Z_2*\Z_2$ and thus non-amenable. We will show that such a $\Z^2$-subshift may have zero topological entropy. For the reader's convenience, we recall necessary definitions and constructions in \cite{E-M}.

 Let $\CE$ be the set of all edges of the two-dimensional Euclidean lattice in $\R^2$, i.e. the Cayley graph of $\Z^2$.  There is a natural free translation action by $\Z^2$ on $\CE$, which yields a full shift action $\Z^2\curvearrowright Z=\{A, B, C, D, E, F\}^\CE$ defined by $(g\cdot \sigma)(e)=\sigma(g^{-1}e)$.

 \begin{defn}
   An edge-coloring $\sigma\in X$ is called \textit{proper} if the edges adjacent to any vertex $x\in \Z^2$ are colored differently.  
 \end{defn}

Denote by $\Sigma$ the subset of $Z$ consisting of all proper edge colorings, which is a closed $\Z^2$-invariant subset of $Z$.
To each $x\in \{A, B, C, D, E, F\}$, there corresponds a continuous involution of $\Sigma$, which is denoted by the same letter. It is defined in the following way on $\sigma\in \Sigma$:  if the vertex $(0,0)$ is connected to one of its four neighborhoods $v$ by an edge labeled by $x$, then $v$ is uniquely determined and $x(\sigma)$ will be the coloring obtaining from translating  $\sigma$ towards $v$ (i.e., the origin is now where $v$ was). Otherwise, define $x(\sigma)=\sigma$. From the definition, it follows that $x\in [[\Z^2\curvearrowright \Sigma]]$. Therefore, there is a homomorphism $\varphi$ from $\langle A\rangle *\cdots * \langle F\rangle$ to $[[\Z^2\curvearrowright \Sigma]]$ and the free product preserves any $\Z^2$-invariant subset of $\Sigma$.     

A \textit{pattern} of a coloring $\sigma\in \Sigma$ is the isomorphism class of a finite labeled subgraph of $\sigma$. 
Now, one defines a particular proper coloring $\sigma$ as follows. One first enumerates non-trivial elements of $\Delta=\{w_i: i\in \N_+\}$ such that the length $|w_i|$ of $w_i$ satisfies $|w_i|\leq i$. This is possible as one may enumerate $\Delta$ by
\[\Delta=\{A, B, C, AB, AC, BC, BA, CA, CB, ABC, \dots\}\]
Then label integers $\Z$ with natural numbers in the following way: for each $n\in \N$, there is $g(n)$ such that any interval in $\Z$ of length $g(n)$ contains at least one element labeled by $n$. A concrete example is defined as follows. For any $m\in \Z\setminus\{0\}$, write $m=\pm 2^np$ for some odd number $p$. Then define the labeling function $L$ at $m$ by $L(m)=n$. Observe that $\{m\in \Z: L(m)=n\}=\{2^np: p\text{ is odd}\}$  and $2^n(p+2)-2^np=2^{n+1}$. This implies that $\{m\in \Z: L(m)=n\}$ is $2^{n+1}$-dense in $\Z$.

Using such the labeling function $L$, one may construct a specific proper edge-coloring $\sigma\in \Sigma$. Let $w=w_i$ be the $i$-th word in the enumeration of $\Delta$ above. Look at the vertical vertex lines $(v, \cdot)$ in the lattice $\Z^2$ such that $v$ is labeled by $i$, i.e., $L(v)=i$. Color those vertical lines in the following way. Starting at the point $(v, 0)$, copy the string $w$ onto the half-line above, beginning from the right end of $w$ (i.e., write $w^{-1}$ upwards). Color the following vertical edge by $D$ and then copy the string $w$ again. Now, repeat the process. Also continue the process below the vertex $(v, 0)$ to obtain a periodic coloring of the whole vertical line. Repeating the process for all non-trivial words in $\Delta$, we color all vertical lines $(v, \cdot)$ for all even $v\in \N_+$. Then color all lines $(v, \cdot)$ for either $v=0$ or $v$ odd in the same periodic way by $A$ and $D$. Finally, color all horizontal lines periodically with colors $E$ and $F$ uniformly by coloring edge $((m, n), (m+1, n))$ for $n\in \Z$ by $E$ if $m$ is even and by $F$ if $m$ is odd.

\begin{rmk}\label{rem: EM example}
    The example of Elek-Monod in \cite{E-M} is a topological full group $[[\alpha]]$ of a minimal free action $\alpha: \Z^2\curvearrowright M$, where $M$ is a minimal subsystem of the orbit closure, denoted by $\Omega(\sigma)$, of the proper coloring $\sigma\in \Sigma$ constructed above. The group $[[\alpha]]$ contains a free group because in this case the homomorphism $\varphi: \Delta \to [[\alpha]]$ above is verified to be injective in \cite{E-M}.
\end{rmk}

We then calculate the topological entropy of $\Z^2\curvearrowright X=\Omega(\sigma)$.
For $n\in \N_+$ write $F_n=[1, 2^n]\times [1, 2^n]$ for simplicity and note that $\{F_n: n\in \N_+\}$ forms a F{\o}lner sequence for $\Z^2$. Denote by $e_0, e_1$ the edges $((0, 0), (1, 0))$ and $((0, 0), (0, 1))$, respectively.
Define a pseudometric $\rho$ on $X$ by setting 
\[\rho(x, y)=
\begin{cases}
    0, \text{ if } x(e_0)=y(e_0) \text{ and } x(e_1)=y(e_1)\\
    1, \text{ otherwise.}
\end{cases}\]
First, $\rho$ is indeed a pseudometric. The only non-trivial part is the triangle inequality. Let $x, y, z\in X$. If $\rho(x, z)=0$ and $\rho(y, z)=0$, then $x(e_i)=z(e_i)=y(e_i)$ holds for any $i=0, 1$. Thus $\rho(x, y)=0$. This shows that $\rho(x, y)\leq \rho(x, z)+\rho(y, z)$.

In addition, it is obvious that $\rho$ is \textit{dynamical generating} in the sense that for any $x\neq y\in X$, there exists a $g\in \Z^2$ such that $\rho(gx, gy)>0$ (see \cite[Definition 9.5]{K-L}). Therefore, \cite[Theorem 9.38]{K-L} implies that the topological entropy \[h_{\operatorname{top}}(X, \Z^2)=h_{\operatorname{sep}}(\rho),\] where the definition on $h_{\operatorname{sep}}(\rho)$ is recalled below. Let $F$ be a finite subset of $\Z^2$. Define $\rho_F(x, y)=\max_{s\in F}\rho(sx, sy)$.  

\begin{defn}\cite[Definition 9.30, 9.31]{K-L}
    Let $F$ be a finite subset of $\Z^2$. A set $D\subset X$ is said to be $(\rho, F, \epsilon)$-\textit{separated} if $\rho_F(x, y)\geq \epsilon$ for all distinct $x, y\in D$. We write $\operatorname{sep}(\rho, F, \epsilon)$ for the maximum cardinality of a $(\rho, F, \epsilon)$-separated subset of $X$.
\end{defn}

\begin{defn}\cite[Definition 9.32]{K-L} Let $\{K_n: n\in \N\}$ an arbitrary F{\o}lner sequence of $\Z^2$. Set 
    \[h_{\operatorname{sep}}(\rho, \epsilon)=\limsup_{n\to \infty}\frac{1}{|K_n|}\log \operatorname{sep}(\rho, K_n, \epsilon),\]
    and
    \[h_{\operatorname{sep}}(\rho)=\sup_{\epsilon>0}h_{\operatorname{sep}}(\rho, \epsilon).\]
\end{defn}

Now, we denote by
$\CD_n=\{e\in \CE: \text{end points of }e\in [-2^n, 0]\times [-2^n, 0]\}$ and write $\CD'_n=\CD_n\setminus\{e\in \CE: e\text{ locates on x- or y-axis}\}$.
For our F{\o}lner sequence $\{F_n: n\in \N\}$ above, note that $\operatorname{sep}(\rho, F_n, \epsilon)=|\{x|_{\CD'_n}: x\in X\}|$ holds for any $1>\epsilon>0$ and thus
one has \[\operatorname{sep}(\rho, F_n, \epsilon)\leq|\{x|_{\CD_n}: x\in X\}|=|\{\sigma|_{g\CD_n}: g\in \Z^2\}|\]
for any $1>\epsilon>0$ because $X=\Omega(\sigma)$. Therefore, by shifting $\CD_n$  to $\CE_n=\{e\in \CE: \text{end points of }e\in [0, 2^n]\times [0, 2^n]\}$, one has 
\[h_{\operatorname{top}}(X, \Z^2)\leq\limsup_{n\to \infty}\frac{1}{|F_n|}\log |\{\sigma|_{g\CE_n}: g\in \Z^2\}|.\]

It is thus left to evaluate $|\{\sigma|_{g\CE_n}: g\in \Z^2\}|$. Recall the labeling function $L$ above and work in the first quadrant. Note that $x=2^n$ is the smallest natural number that the word $w_n$ is pasted above the vertex $(x, 0)$. Thus, all the vertical words $w_i$ contained in the pattern $\sigma|_{\CE_n}$ have to satisfy $i\leq n$. The following is a key observation.

\begin{lem}\label{lem: key for entropy zero}
 Let $n\in \N_+$ and $m\in \N$. Write \[\CP_{n, m}=\{e\in \CE: \text{end points of }e\in [2^nm+1, 2^n(m+1)-1]\times [0, 2^n]\}.\] 
Then for any $n\in \N_+$, all  $\sigma|_{\CP_{n, m}}$ are of the same pattern for all $m\in \N$.
\end{lem}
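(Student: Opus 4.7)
\bigskip
\noindent\textbf{Proof plan.}
The plan is to show the stronger statement that $\sigma\vert_{\CP_{n,m}}$ is literally the translate of $\sigma\vert_{\CP_{n,0}}$ by the vector $(2^n m, 0)$; this immediately gives the same pattern. So I fix $n\geq 1$ and $m\in \N$, and for each edge $e\in \CP_{n,0}$ with translate $e+(2^nm,0)\in \CP_{n,m}$, I aim to verify $\sigma(e)=\sigma(e+(2^nm,0))$. I split edges into horizontal and vertical and use the construction of $\sigma$ recalled just before the lemma.

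For a horizontal edge $e=((x,y),(x+1,y))\in \CP_{n,0}$, its color depends only on the parity of $x$ ($E$ if $x$ is even, $F$ if odd). Since $n\geq 1$ the translation vector $2^n m$ is even, so $x$ and $x+2^n m$ have the same parity, and the two horizontal edges receive the same color. This handles all horizontal edges.

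For a vertical edge $e=((x,y),(x,y+1))\in \CP_{n,0}$ we have $1\leq x\leq 2^n-1$ and $0\leq y\leq 2^n-1$. The coloring of the whole vertical line at abscissa $x$ is determined by whether $x$ is odd (or $x=0$), in which case one uses the periodic $A,D$ pattern, or $x$ is even, in which case one writes $x=2^k p$ with $p$ odd (so $L(x)=k$) and pastes copies of $w_k$ separated by $D$'s, starting from the point $(x,0)$. In both cases the recipe starts at height $y=0$ and is the same function of the label, so it suffices to check that the labels (or the odd/even type) of the lines at $x$ and $x+2^n m$ agree. If $x$ is odd, then $x+2^n m$ is also odd since $2^n m$ is even, so both lines use the $A,D$ pattern. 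If $x$ is even, write $x=2^k p$ with $p$ odd; the constraint $1\leq x\leq 2^n-1$ forces $k\leq n-1$, and then
\[
x+2^n m \;=\; 2^k\bigl(p+2^{\,n-k}m\bigr),
\]
where $p+2^{n-k}m$ is odd because $n-k\geq 1$. Hence $L(x+2^n m)=k=L(x)$, and the two lines are colored by the same word $w_k$ starting at height $0$, so in particular the edge at height $y$ receives the same color.

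The key point (and the only place where some thought is required) is the observation that excluding the two bounding vertical lines $x=2^n m$ and $x=2^n(m+1)$ in the definition of $\CP_{n,m}$ is exactly what prevents any $x$ in the strip from being divisible by $2^n$; this forces $k\leq n-1$ in the computation above and hence $L$ becomes periodic with period $2^n$ on the strip. Once this is set up, the two color-preservation checks are routine, and combining them yields $\sigma\vert_{\CP_{n,m}} = T_{(2^n m,0)}\sigma\vert_{\CP_{n,0}}$, completing the proof.
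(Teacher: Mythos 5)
Your proof is correct and takes essentially the same route as the paper: the key computation that the label is preserved, $L(x+2^nm)=L(x)$ for $x$ in the open strip because the $2$-adic valuation of such $x$ is at most $n-1$, is exactly the paper's argument (the paper writes $k=2^nm+2^lp=2^l(2^{n-l}m+p)$ with $l<n$). You merely phrase the conclusion as a literal translation of $\sigma|_{\CP_{n,0}}$ by $(2^nm,0)$ and spell out the routine horizontal-edge and odd-line checks that the paper leaves implicit.
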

\begin{proof}
    Note that any integer $k\in [2^nm+1, 2^n(m+1)-1]$ can be written as $2^nm+2^lp$ for some $l\in \N$ with $l<n$ and positive odd number $p$. Thus, $k=2^l\cdot(2^{n-l}m+p)$ in which $2^{n-l}m+p$ is an odd number. Therefore, $L(k)=l$ by our definition of the labeling function of $L$. This means the value of $L(k)$ does not depend on $m$. Instead, $L(k)$ is determined by the distance $|k-2^nm|$.
    Therefore, the definition of $\sigma$ shows that all $\sigma|_{\CP_{n, m}}$ are of the same pattern.
\end{proof}

\begin{prop}\label{prop: entropy zero}
   Let $\Z^2\curvearrowright X=\Omega(\sigma)$ be the subshift action above. Then one has $h_{\operatorname{top}}(X, \Z^2)=0$.
   \end{prop}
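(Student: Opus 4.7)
The inequality derived in the excerpt reduces our task to bounding
\[P_n := \bigl|\{\sigma|_{g\CE_n}: g\in\Z^2\}\bigr|,\]
and showing $\log P_n = o(|F_n|) = o(4^n)$. The plan is to establish the stronger bound $\log P_n = O(2^n)$ by identifying a small set of combinatorial parameters that completely determine each pattern.

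Writing $g = (a,b)$, the window covers the square $[a, a+2^n] \times [b, b+2^n]$. The horizontal edges are colored $E$ or $F$ depending only on the parity of the left $x$-coordinate, so the entire horizontal pattern is determined by $a \bmod 2$ alone. Each of the $2^n + 1$ vertical columns contributes a length-$2^n$ sequence; call a column \emph{boundary} if its $x$-coordinate is a multiple of $2^n$, otherwise \emph{interior}. A window contains at most two boundary columns, and for every interior column one has $L(v) \leq n-1$, so its vertical period is at most $|w_{L(v)}|+1 \leq n$.

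By Lemma \ref{lem: key for entropy zero}, the restriction of $\sigma$ to the interior of a strip $\CP_{n,m}$ is the same for all $m$, so once the phase $r := a \bmod 2^n$ is fixed, the within-strip position of each interior column is universal across strips. Thus the colorings on all interior columns are jointly determined by $r$ together with $b$ modulo $M_n := \operatorname{lcm}(1, 2, \dots, n)$; by the prime number theorem, $\log M_n = O(n)$. For each of the at most two boundary columns, the length-$2^n$ vertical sequence lies in the $4$-letter alphabet $\{A,B,C,D\}$ subject to the consecutive-different constraint coming from properness, so each such column admits at most $4\cdot 3^{2^n - 1}$ distinct patterns.

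Combining these independent sources of variability yields
\[P_n \leq 2 \cdot 2^n \cdot M_n \cdot (4\cdot 3^{2^n-1})^2,\]
so $\log P_n = O(n) + O(2^n) = O(2^n)$. Dividing by $|F_n| = 4^n$ gives $\log P_n/|F_n| = O(2^{-n}) \to 0$, and therefore $h_{\operatorname{top}}(X, \Z^2) = 0$. The main subtlety is the careful application of Lemma \ref{lem: key for entropy zero} to neutralize the unbounded diversity of labels $L(2^n k)$ at strip boundaries: although these labels grow without bound as $k$ varies, the coloring inside the strips is universal, so only the at most two boundary columns actually sitting in a given window are responsible for the exponential factor $3^{2^n}$ in the bound.
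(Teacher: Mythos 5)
Your proposal is correct and takes essentially the same route as the paper: both reduce to counting the window patterns $\{\sigma|_{g\CE_n}:g\in\Z^2\}$, use Lemma \ref{lem: key for entropy zero} so that the interior columns (labels $<n$, hence vertical period $\leq n$ and anchored at height $0$) contribute only subexponentially many possibilities, and absorb the at most two boundary columns into a crude $4^{O(2^n)}$ factor, giving a logarithmic count of order $O(2^n)=o(4^n)$. The only difference is bookkeeping: you parameterize patterns directly by $a\bmod 2^n$, $b\bmod\operatorname{lcm}(1,\dots,n)$ and the boundary-column contents, while the paper splits $\Z^2$ into horizontal regions and bounds the vertical-shift periods by $(n+1)!$ --- both yield the same conclusion.
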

\begin{proof}
Let $n\in \N_+$ and $m\in \N$. Define \[\CE_{n, m}=\{e\in \CE: \text{end points of }e\in [2^nm, 2^n(m+1)]\times [0, 2^n]\},\] which adds two more vertical lines $(2^nm, \cdot)$ and $(2^n(m+1), \cdot)$ to $\CP_{n ,m}$ for consideration.

Now we consider $\sigma|_{\CE_{n, m}}$ and the shifts of it upwards and downwards, i.e., all $\sigma|_{g\CE_{n, m}}$ for $g\in \{0\}\times \Z$. Note that  either $2^nm$ or $2^n(m+1)$ is labeled by $n$. For example, let's say $L(2^nm)=n$, i.e., $m$ is odd. Then there is a period of shifting the set
\[\CQ_{n, m}=\{e\in \CE: \text{end points of }e\in [2^nm, 2^n(m+1)-1]\times [0, 2^n]\}\]
upwards or downwards, which 
is at most $\prod_{1\leq i\leq n}(|w_i|+1)<\prod_{1\leq i\leq n}(i+1)=(n+1)!$. Now, because one uses four  colors to color vertical lines, i.e., $\{A, B, C, D\}$, one has the cardinality of the set of patterns 
\[|\{\sigma|_{g\CE_{n, m}}: n\in \N_+, m\text{ is odd, } g\in \{0\}\times \Z\}|\leq 4^{2^n}\cdot (n+1)!.\]
In the case $L(2^n(m+1))=n$, i.e., $m$ is even, the same argument shows that 
\[|\{\sigma|_{g\CE_{n, m}}: n\in \N_+, m\text{ is even, } g\in \{0\}\times \Z\}|\leq 4^{2^n}\cdot (n+1)!.\]
Combining these two cases, one has
\[|\{\sigma|_{g\CE_{n, m}}: n\in \N_+, m\in \N, g\in \{0\}\times \Z\}|\leq 2\cdot 4^{2^n}\cdot (n+1)!.\]

Then let $m\in \N_+$ and we look at the shifts $\CE_{n,m}$ to the right by $g=(k, 0)$, where $0<k<2^n$. This provides at most $2^n-1$ extra new patterns $\sigma|_{g\CE_{n, m}}$, depending on where the vertical line $(2^n(m+1),\cdot)$ locates in $g\CE_{n, m}$. On the other hand, except for the line  $(2^n(m+1),\cdot)$, other vertical lines in $g\CE_{n, m}$ have determined labels strictly less than $n$ by Lemma \ref{lem: key for entropy zero}, independent of the value of $m$. Therefore, the same argument as above to count all possible different vertical shifts of these $\sigma|_{g\CE_{n, m}}$ for $g=(k,0)$ with $0<k<2^n$ one has
\[|\{\sigma|_{g\CE_{n,m}}: g\in \{k\}\times \Z\}|\leq 4^{2^n}\cdot n!\]
for any $m\in \N$ and thus in total one has
\[|\{\sigma|_{g\CE_n}: g\in \N\times \Z\}|\leq (2^n-1)\cdot 4^{2^n}\cdot n!+ 2\cdot 4^{2^n}\cdot (n+1)!< 2\cdot 2^n\cdot 4^{2^n}\cdot (n+1)!.\]
Recall that the distribution of the labeling on $\Z$ is mirror-symmetric to the $y$-axis. One thus has 
\[|\{\sigma|_{g\CE_n}: g\in (\N\sqcup \{k\in \Z: k\leq -2^n\})\times \Z\}|\leq 4\cdot 2^n\cdot 4^{2^n}\cdot (n+1)!.\]

Finally, we count the size of $I=\{\sigma|_{g\CE_n}: g\in (-2^n, -1]\times \Z\}$. Note that the square $\sigma|_{g\CE_n}\in I$ locates between two lines $(-2^n, \cdot)$ and $(2^n, \cdot)$, in which any even non-zero integer $k\in (-2^n, 2^n)$ has the label $L(k)<n$. Then similar to the argument above, the period for shifting such $\sigma|_{g\CE_n}$ in the vertical direction is at most \[\prod_{1\leq i<n}(|w_i|+1)<\prod_{1\leq i<n}(i+1)=n!.\] Therefore, taking all horizontal shifts $\sigma|_{g\CE_n}$ for $g\in (-2^n, -1]\times \{0\}$ into consideration, in total one has 
\[|\{\sigma|_{g\CE_n}: g\in (-2^n, -1]\times \Z\}|\leq 2^n\cdot n!.\]
Then, using Stirling's approximation for $n!$ and $(n+1)!$ (see, e.g., \cite[Lemma 10.1]{K-L}), one has
\[|\{\sigma|_{g\CE_n}: g\in \Z^2\}|\leq 2^n\cdot n!+ 4\cdot 2^n\cdot 4^{2^n}\cdot (n+1)!\leq 2^n\cdot n\cdot (n/e)^n\cdot 4^{2^n}\cdot (4n+5),\]
which implies
\begin{equation*}
    \begin{split}
        \frac{\log|\{\sigma|_{g\CE_n}: g\in \Z^2\}|}{|F_n|}\leq \frac{n\log2+\log n+n(\log n-1)+\log (4n+5)+2^n \log 4}{2^n\cdot 2^n}\to 0
    \end{split}
\end{equation*}
as $n\to \infty$.
Therefore, one has $h_{\operatorname{top}}(X, \Z^2)=0$. 
\end{proof}
We thus have the following theorem.

\begin{thm}\label{thm: non-amenable}
    There exists a minimal free action $\alpha: \Z^2\curvearrowright M$ on the Cantor set $M$ with topological entropy zero such that $[[\alpha]]$  contains a free group.
\end{thm}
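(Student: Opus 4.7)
The plan is to combine the Elek--Monod construction (whose relevant features are recalled in Remark \ref{rem: EM example}) with the entropy computation in Proposition \ref{prop: entropy zero}. Concretely, I would take the specific proper edge-coloring $\sigma \in \Sigma$ constructed above from the labeling function $L$, and let $X = \Omega(\sigma)$ be its orbit closure under the $\Z^2$-shift. By Zorn's lemma, $X$ contains a minimal closed $\Z^2$-invariant subset $M$, and I would let $\alpha \colon \Z^2 \curvearrowright M$ be the restricted action.

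First I would verify freeness and the existence of a free subgroup in $[[\alpha]]$. Both are essentially recorded in Remark \ref{rem: EM example}: Elek and Monod prove in \cite{E-M} that the homomorphism $\varphi \colon \langle A \rangle \ast \cdots \ast \langle F \rangle \to [[\Z^2 \curvearrowright \Sigma]]$ is injective on the subgroup $\Delta$ and that $M$ can be chosen as a minimal subsystem on which $\alpha$ is free and on which the image of this homomorphism still embeds (so that $[[\alpha]]$ contains, for example, a copy of $\Z_2 \ast \Z_2 \ast \Z_2$, hence a non-abelian free subgroup). I would cite this directly rather than reprove it.

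Next I would handle the entropy. Proposition \ref{prop: entropy zero} establishes $h_{\operatorname{top}}(X, \Z^2) = 0$ for the full orbit closure $X = \Omega(\sigma)$. Since $M \subset X$ is a closed $\Z^2$-invariant subset, the variational principle (or the straightforward fact that separated sets in $M$ are separated sets in $X$) gives
\[
h_{\operatorname{top}}(M, \Z^2) \;\leq\; h_{\operatorname{top}}(X, \Z^2) \;=\; 0,
\]
so $h_{\operatorname{top}}(M, \Z^2) = 0$ as well. Combining the two pieces yields the theorem.

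The only real subtlety is making sure that one can choose the minimal subsystem $M$ in such a way that the Elek--Monod free subgroup indeed embeds into $[[\Z^2 \curvearrowright M]]$ rather than merely into $[[\Z^2 \curvearrowright \Sigma]]$. This is the part that must be invoked from \cite{E-M}, since the involutions $A,\dots,F$ are defined on all of $\Sigma$ and one needs their restrictions to $M$ to still generate a free product; the entropy argument, by contrast, is automatic once Proposition \ref{prop: entropy zero} is in hand. No further computation should be required.
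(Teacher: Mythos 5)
Your proposal is correct and follows essentially the same route as the paper: take $M$ to be a minimal subsystem of $X=\Omega(\sigma)$, invoke Remark \ref{rem: EM example} (i.e.\ the Elek--Monod injectivity result) for the free subgroup in $[[\alpha]]$, and deduce $h_{\operatorname{top}}(M,\Z^2)=0$ from Proposition \ref{prop: entropy zero} together with $M\subset X$. The subtlety you flag about the embedding surviving restriction to $M$ is exactly what the paper also delegates to \cite{E-M} via that remark, so no gap remains.
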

\begin{proof}
Let $X=\Omega(\sigma)$ be the orbit closure of the proper coloring $\sigma$.  Let $M$ be a $\Z^2$-minimal subsystem of $X$, whose topological full group $[[\alpha]]$ contains a free group by Remark \ref{rem: EM example}. One also has $h_{\operatorname{top}}(M, \Z^2)=0$ because
Proposition \ref{prop: entropy zero} has established $h_{\operatorname{top}}(X, \Z^2)=0$ and $M\subset X$.
\end{proof}
	
	\section{Residually finite actions and LEF topological full groups}
In this section, we establish Theorem \ref{thm: main 2}. Residually finite actions are introduced in \cite{K-N}. We recall below its definition in the case that the underlying space is perfect compact metrizable space, e.g., Cantor set.
	
	\begin{defn}\cite[section 2]{K-N}\label{defn: residually finite action}
		A continuous action of $G$ on a perfect compact metrizable space $X$, equipped with a compatible metric $d$, is said to be \textit{residually finite} if for any finite $F\subset G$ and $\epsilon>0$, there is a finite set $E\subset X$, equipped with a $G$-action  $\beta$ such that $E$ is $\epsilon$-dense in $X$ and $d(\alpha(s)(z), \beta(s)(z))<\epsilon$ for all $z\in E$ and $s\in F$.
	\end{defn}

If a residually finite action $\alpha$ on the Cantor set is topologically free, we may ask points in $E$ to be free points for $\alpha$ by a standard approximation argument.

\begin{prop}\label{prop: free point}
    Let $\alpha: G\curvearrowright X$ be a topologically free residually finite action on the Cantor set $X$. Then for any finite $F\subset G$ and $\epsilon>0$, there is a finite set $E\subset X$ consisting of free points for $\alpha$, equipped with a $G$-action  $\beta$ such that $E$ is $\epsilon$-dense in $X$ and $d(\alpha(s)(z), \beta(s)(z))<\epsilon$ for all $z\in E$ and $s\in F$.
    \end{prop}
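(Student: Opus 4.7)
The strategy is to apply residual finiteness once with slightly tightened parameters to produce a finite approximate $G$-set $E_{0}$, then nudge each point of $E_{0}$ to a nearby free point and transport the $G$-action along the perturbation bijection. The verification will rest on uniform continuity of the finitely many homeomorphisms $\alpha(s)$ for $s\in F$.

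First I would exploit uniform continuity: since $X$ is a compact metric space and every $\alpha(s)$ is a homeomorphism, pick $\delta>0$ so that $d(x,y)<\delta$ implies $d(\alpha(s)(x),\alpha(s)(y))<\epsilon/3$ for all $s\in F$. Set $\epsilon':=\min\{\epsilon/3,\delta/2\}$ and invoke Definition \ref{defn: residually finite action} with the pair $(F,\epsilon')$ to obtain a finite set $E_{0}\subset X$ and a $G$-action $\beta_{0}$ on $E_{0}$ such that $E_{0}$ is $\epsilon'$-dense in $X$ and $d(\alpha(s)(z),\beta_{0}(s)(z))<\epsilon'$ for every $z\in E_{0}$ and $s\in F$. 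Because $\alpha$ is topologically free and $X$ is a Baire space, the set of free points is a dense $G_{\delta}$ in $X$: the fixed-point set of each non-identity $g\in G$ is closed and has empty interior, so the intersection of their complements is dense. Let $r$ denote the minimum pairwise distance inside $E_{0}$ and put $\eta:=\min\{\delta/2,\epsilon/3,r/3\}$. For each $z\in E_{0}$ pick a free point $\phi(z)\in X$ with $d(\phi(z),z)<\eta$; the inequality $\eta<r/3$ forces $\phi$ to be injective by the triangle inequality, hence $E:=\phi(E_{0})$ is a finite set of $|E_{0}|$ free points.

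Next I would transport the action by setting $\beta(s):=\phi\circ\beta_{0}(s)\circ\phi^{-1}$ on $E$; this is a genuine $G$-action because conjugation by a bijection preserves the group law. The two required conditions then follow from routine bookkeeping. For $\epsilon$-density, any $x\in X$ lies within $\epsilon'$ of some $z\in E_{0}$ and hence within $\epsilon'+\eta<\epsilon$ of $\phi(z)\in E$. For the approximation, for $z\in E_{0}$ and $s\in F$ the triangle inequality together with the identity $\beta(s)(\phi(z))=\phi(\beta_{0}(s)(z))$ yields
\[
d(\alpha(s)(\phi(z)),\beta(s)(\phi(z)))\le d(\alpha(s)(\phi(z)),\alpha(s)(z))+d(\alpha(s)(z),\beta_{0}(s)(z))+d(\beta_{0}(s)(z),\phi(\beta_{0}(s)(z))),
\]
and each term is controlled by $\epsilon/3$, $\epsilon'$ and $\eta$ respectively, summing to at most $\epsilon$.

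The main (and essentially only) obstacle is the coupled parameter choice: one must arrange that free points close enough to every $z\in E_{0}$ exist while preserving pairwise separation (so that $\phi$ is a bijection and $\beta_{0}$ can be transported), and simultaneously absorb all three perturbation terms into $\epsilon$. Both issues are handled in one stroke by the choice $\eta=\min\{\delta/2,\epsilon/3,r/3\}$ once $\epsilon'$ is fixed as above.
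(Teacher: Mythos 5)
Your proof is correct and follows essentially the same route as the paper's: invoke residual finiteness with tightened parameters, perturb each point of the resulting finite set to a nearby free point (whose density follows from topological freeness), transport the action through the resulting bijection, and control the errors via uniform continuity of $\alpha(s)$, $s\in F$, plus the triangle inequality. If anything, your choice $\eta<r/3$ handles the injectivity of the perturbation map slightly more carefully than the paper's bound $d(x,x')<\epsilon_0$, and you make explicit the Baire-category argument for density of free points that the paper leaves implicit.
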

    \begin{proof}
    Let $F$ be a finite set in $G$ and $\epsilon>0$.   Since $\alpha$ is residually finite, there exists a finite $E\subset X$ equipped with an action $\beta$, which is $\epsilon/3$-dense in $X$ and $d(\alpha(s)(z), \beta(s)(z))<\epsilon/3$ for all $z\in E$ and $s\in F$.  Now, we denote by $\epsilon_0=\min\{d(x, y): x, y\in E\}$. Moreover, because $F$ is finite and $X$ is compact, there exists a $\delta>0$ such that if $d(x, y)<\delta$, then $d(\alpha(s)(x), \alpha(s)(y))<\epsilon/3$ for any $x, y\in X$ and $s\in F$. Then because $\alpha$ is topologically free, for each $x\in E$, choose a free point $x'\in X$ for $\alpha$ such that $d(x, x')<\min\{\epsilon_0, \epsilon/3, \delta\}$. Note that the map $x\mapsto x'$ is injective. Define $E'=\{x': x\in E\}$ and an action $\beta'$ of $G$ on $E'$ by $\beta(s)(x')=y'$ whenever $\beta(s)(x)=y$ for $s\in G$. Then, by our construction, the set $E'$ is $2\epsilon/3$-dense in $X$. In addition, observe $d(\beta'(s)(x'), \beta(s)(x))<\epsilon/3$ and
    $d(\alpha(s)(x), \alpha(s)(x'))<\epsilon/3$
    for any $s\in F$ by the definition of $\beta'$ and our choice of $\delta$. This implies the distance $d(\alpha(s)(x'),\beta'(s)(x'))$ is bounded by
    \begin{align*}
       d(\alpha(s)(x'), \alpha(s)(x))+d(\alpha(s)(x), \beta(s)(x))+d(\beta(s)(x), \beta'(s)(x))<\epsilon 
    \end{align*}
 for any $x'\in E'$ and $s\in F$. Thus, the set $E'$ and action $\beta'$ satisfy the requirement.
    
    \end{proof}
	
	The following is a characterization of LEF-groups in \cite{G-V}.
	
	\begin{lem}\cite[Theorem-definition]{G-V}\label{lem: charac of LEF}
		For countable groups $G$, the following are equivalent.
		\begin{enumerate}
			\item $G$ is an LEF-group.
			\item There exists a countable sequence of finite groups $H_n$ and a system of maps $\pi_n: G\to H_n$ such that
			\begin{enumerate}
				\item[a.] for any $x, y\in G$ with $x\neq y$ there exists an $N>0$ such that $\pi_n(x)\neq \pi_n(y)$ whenever $n>N$, and
				\item[b.] for any $x, y\in G$ there is an $N$ such that $\pi_n(x\cdot y)=\pi_n(x)\cdot \pi_n(y)$ whenever $n>N$. 
			\end{enumerate}
		\end{enumerate} 
	\end{lem}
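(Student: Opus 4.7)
My plan is to prove both implications of the equivalence, which is the classical Gordon--Vershik characterization of LEF groups.

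For the direction (1) $\Rightarrow$ (2), I fix an enumeration $G = \{g_1, g_2, \ldots\}$ and, for each $n$, apply the LEF definition to the finite set $F_n = \{g_i : 1 \leq i \leq n\} \cup \{g_i\cdot g_j : 1 \leq i, j \leq n\}$. This yields a finite subset $K_n \subset G$ with $F_n \subset K_n$ and a binary operation $\odot_n$ making $(K_n, \odot_n)$ a finite group whose multiplication agrees with that of $G$ on pairs taken from $F_n$. Setting $H_n := (K_n, \odot_n)$ and defining $\pi_n : G \to H_n$ by $\pi_n(g) = g$ if $g \in K_n$ and $\pi_n(g) = e_{H_n}$ otherwise, condition (a) is immediate because any two distinct $g_i, g_j$ eventually lie in $K_n$ as distinct elements; condition (b) follows because once $g_i$, $g_j$ and their $G$-product $g_i\cdot g_j$ all lie in $F_n$, the agreement $g_i \odot_n g_j = g_i\cdot g_j$ gives $\pi_n(g_i)\cdot \pi_n(g_j) = \pi_n(g_i\cdot g_j)$.

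For (2) $\Rightarrow$ (1), I may assume $G$ is infinite (otherwise $G$ itself serves as $K$). Given a finite $H \subset G$, I enlarge it to $\widetilde H = H \cup H\cdot H$. Applying (a) and (b) to the finitely many pairs involved, I choose $n$ large enough that $\pi_n|_{\widetilde H}$ is injective and $\pi_n(h_1\cdot h_2) = \pi_n(h_1)\cdot \pi_n(h_2)$ in $H_n$ for every $h_1, h_2 \in H$. Since $G$ is infinite and $H_n$ is finite, I can adjoin to $\widetilde H$ finitely many further elements of $G$ to form a finite set $K \subset G$ with $|K| = |H_n|$, and then extend the injection $\iota := \pi_n|_{\widetilde H}$ to any bijection $\widehat\iota : K \to H_n$. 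Pulling back the group law of $H_n$ along $\widehat\iota$ equips $K$ with a binary operation $\odot$ making $(K, \odot)$ a finite group with $H \subset K \subset G$.

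The crux is verifying that $\odot$ restricts to the multiplication of $G$ on $H$, which is where the two conditions of (2) are used together: for $h_1, h_2 \in H \subset \widetilde H$, the chain
\[
\widehat\iota(h_1 \odot h_2) \;=\; \widehat\iota(h_1)\cdot \widehat\iota(h_2) \;=\; \pi_n(h_1)\cdot \pi_n(h_2) \;=\; \pi_n(h_1\cdot h_2) \;=\; \widehat\iota(h_1\cdot h_2)
\]
uses (b) in the middle step and the inclusion $h_1\cdot h_2 \in \widetilde H \subset K$ at the endpoints, so injectivity of $\widehat\iota$ (guaranteed by (a)) forces $h_1 \odot h_2 = h_1\cdot h_2$. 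Rather than a genuine obstacle, the key design point is the initial enlargement to $\widetilde H = H \cup H\cdot H$: without building $H\cdot H$ into $K$ from the outset, the right-hand side of the desired equality would fail to live in $K$ and the injectivity argument would have no target to compare against.
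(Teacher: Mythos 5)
Your proof is correct. Note that the paper does not prove this lemma at all --- it is quoted directly from Gordon--Vershik \cite{G-V} as a known characterization --- so there is no argument in the paper to compare against; what you have written is the standard proof of that cited result. Both directions check out: in $(1)\Rightarrow(2)$ the map $\pi_n$ (identity on $K_n$, trivial elsewhere) satisfies (a) and (b) eventually because any fixed pair of elements and their product eventually lie in $F_n\subset K_n$; in $(2)\Rightarrow(1)$ the enlargement to $\widetilde H=H\cup H\cdot H$ together with injectivity of $\pi_n$ on $\widetilde H$ (from (a), applied to the finitely many pairs) and multiplicativity on $H$ (from (b)) is exactly what is needed to pull back the group law of $H_n$ along the extended bijection $\widehat\iota$ and conclude $h_1\odot h_2=h_1\cdot h_2$ for $h_1,h_2\in H$.
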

	
	In our setting of topological full groups, we have the following criteria for LEF-ness based on Lemma \ref{lem: charac of LEF}.
	
	\begin{lemma}\label{lemma: compressed LEF representation}
		Let $\{e_\Gamma=\gamma_0, \gamma_1,\dots, \}$ be an enumeration of a countable discrete group $\Gamma$. Suppose for any $i\geq 1$ there is a constant $\epsilon_i>0$ and for any $n\geq 1$ there is an map $\Theta_n: \Gamma\to \map(A_n)$ for some finite set $A_n$ with $\Theta_n(e_\Gamma)=\id_{A_n}$ and satisfying the condition that for all $r>0$  there exists $K_{r}>0$ such that  if $n>K_{r}$ one always has
		\begin{enumerate}
			\item $d_H(\Theta_n(\gamma_i\gamma_j), \Theta_n(\gamma_i)\Theta_n(\gamma_j))=0$ if $1\leq i, j\leq r$.
			\item $d_H(\Theta_n(\gamma_i), \id_{A_n})>\epsilon_i$ if $1\leq i\leq r$.
		\end{enumerate}
		Then $\Gamma$ is LEF.
	\end{lemma}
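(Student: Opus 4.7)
The plan is to deduce the LEF property from Lemma \ref{lem: charac of LEF}: I will construct, for each $n$, a finite group $H_n$ and a map $\pi_n: \Gamma \to H_n$ such that (a) distinct elements of $\Gamma$ are eventually separated by the $\pi_n$'s and (b) any fixed product is eventually respected. The natural target is $H_n = \sym(A_n)$, which is finite. Unlike the sofic case of Proposition \ref{prop: compressed sofic representation}, where only approximate multiplicativity was available and the diagonal-product amplification was necessary, the exact multiplicativity in hypothesis (1) lets one skip that construction and simply read off $\pi_n$ from $\Theta_n$.

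The first key observation is that for every fixed $\gamma \in \Gamma$ the map $\Theta_n(\gamma)$ eventually lies in $\sym(A_n)$. Indeed, writing $\gamma=\gamma_i$ and $\gamma^{-1}=\gamma_j$ and taking $r=\max(i,j)$, for $n>K_r$ hypothesis (1) combined with $\Theta_n(e_\Gamma)=\id_{A_n}$ gives
\[
\Theta_n(\gamma_i)\Theta_n(\gamma_j)=\Theta_n(\gamma_i\gamma_j)=\Theta_n(e_\Gamma)=\id_{A_n},
\]
and symmetrically in the other order, so $\Theta_n(\gamma_i)$ is a bijection of $A_n$. I then define $\pi_n(\gamma)=\Theta_n(\gamma)$ whenever the right-hand side is a permutation, and $\pi_n(\gamma)=\id_{A_n}$ otherwise (a convention used only on the finitely many small $n$ for each given $\gamma$).

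For condition (b), given $x,y\in\Gamma$ I choose $r$ large enough that $\{x,y,xy,x^{-1},y^{-1},(xy)^{-1}\}\subset\{\gamma_0,\ldots,\gamma_r\}$; by the preceding paragraph, $\pi_n$ coincides with $\Theta_n$ on each of these elements for $n>K_r$, and then hypothesis (1) immediately yields $\pi_n(xy)=\Theta_n(x)\Theta_n(y)=\pi_n(x)\pi_n(y)$. For condition (a), given $x\neq y$ set $\gamma_i=xy^{-1}\neq e_\Gamma$ (so $i\geq 1$) and choose $r$ containing $x,y,x^{-1},y^{-1}$ and $\gamma_i$; for $n>K_r$ I compute
\[
\pi_n(x)\pi_n(y)^{-1}=\Theta_n(x)\Theta_n(y^{-1})=\Theta_n(xy^{-1})=\Theta_n(\gamma_i),
\]
and hypothesis (2) gives $d_H(\Theta_n(\gamma_i),\id_{A_n})>\epsilon_i>0$, so in particular $\Theta_n(\gamma_i)\neq\id_{A_n}$ and hence $\pi_n(x)\neq\pi_n(y)$. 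Applying Lemma \ref{lem: charac of LEF} then finishes the proof.

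The only real subtlety is the first observation above — that $\Theta_n(\gamma)$ is actually a permutation for all large $n$. This is the point where the strict $d_H=0$ in hypothesis (1) (rather than the small-but-nonzero $\epsilon$ from the sofic setting) is used essentially; without it one would be forced into the diagonal-product passage to $A_n^{l_n}$ as in Proposition \ref{prop: compressed sofic representation}, which only yields approximate rather than honest homomorphisms into $\sym(A_n^{l_n})$ and thus would not furnish the LEF data required by Lemma \ref{lem: charac of LEF}.
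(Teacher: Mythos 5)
Your proof is correct and follows essentially the same route as the paper: both establish that exact multiplicativity together with $\Theta_n(e_\Gamma)=\id_{A_n}$ forces $\Theta_n(\gamma)$ to be a permutation with $\Theta_n(\gamma)^{-1}=\Theta_n(\gamma^{-1})$ for all large $n$, then define $\pi_n:\Gamma\to\sym(A_n)$ by $\pi_n=\Theta_n$ eventually (arbitrary/identity on the finitely many small $n$), and verify the two conditions of Lemma \ref{lem: charac of LEF} using hypothesis (2) for separation. The only cosmetic difference is that you separate $x\neq y$ via $xy^{-1}$ while the paper uses $x^{-1}y$.
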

	\begin{proof}
	Let $x=\gamma_i$ and $x^{-1}=\gamma_j$ in the enumeration of $\Gamma$ with $i, j>0$. Then by assumption, for $i, j$, there exists a $K_x=\max\{K_i, K_j\}$ such that $d_H(\Theta_n(x), \id_{A_n})>0$ and $d_H(\Theta_n(e_\Gamma), \Theta_n(x)\Theta_n(x^{-1}))=0$ whenever $n>K_x$. These imply that if $n>K_x$, then $\Theta_n(x)\neq \id_{A_n}$  and
	$\Theta_n(x)\in \sym(A_n)$ satisfying $\Theta_n(x)^{-1}=\Theta_n(x^{-1})$ because $\id_{A_n}=\Theta_n(e_\Gamma)$.
 
 Define a finite group $H_n=\sym(A_n)$ and a map $\pi_n: \Gamma\to H_n$ in the following way. For any $x\in \Gamma$, define $\pi_n(x)=\Theta_n(x)$  if $n>K_x$. Otherwise, define $\pi_n(x)$ to be any element in $H_n$. Now, let $x\neq y\in \Gamma$ and for any $n>\max\{K_{x^{-1}y}, K_x, K_y\}$, observe
		\[\pi_n(x^{-1}y)=\Theta_n(x^{-1}y)=\Theta_n(x)^{-1}\Theta_n(y)=\pi_n(x)^{-1}\pi_n(y)\]
		and $\Theta_n(x^{-1}y)\neq \id_{A_n}$. This implies $\pi_n(x)\neq \pi_n(y)$.  Moreover,  one has 
		\[\pi_n(xy)=\Theta_n(xy)=\Theta_n(x)\Theta_n(y)=\pi_n(x)\pi_n(y)\]
	whenever $n>\max\{K_{xy}, K_x, K_y\}$. Thus, $\Gamma$ is LEF.
	\end{proof}
	
	Let $\alpha: G\curvearrowright X$ be a minimal topologically free residually finite action of a countable discrete group $G$ on the Cantor set $X$.  Fix an enumeration $\{\gamma_0, \gamma_1,\dots \}$ of $[[\alpha]]$ with $\gamma_0=\id_X$. Let $M_n=\{\gamma_0,\dots, \gamma_n\}$ and $\{\eta_n>0: n\in \N\}$  a decreasing sequence converging to zero. For each $\gamma_i$, we choose a continuous orbit cocycle $c(\gamma_i): X\to G$, which yields a clopen partition $\CU_i=\{U_1, \dots, U_{l_i}\}$ of $X$ and a collection of group elements $\CC_i=\{g_{i, 1},\dots, g_{i, l_i}\}\subset G$ such that $\gamma_i|_{U_j}=g_{i, j}$ for any $j\leq l_i$. Define $F_n=\bigcup_{i\leq n}\CC_i$. 
 
 
 Since $\alpha$ is residually finite, for each $F_n$ and $\eta_n$, Proposition \ref{prop: free point} implies that there exists a finite set $E_n\subset X$ consisting of free points  for $\alpha$ such that $E_n$ is $\eta_n$-dense in $X$ and is equipped with an action $\beta_n: G\curvearrowright E_n$ satisfying $d(\beta_n(g)y, \alpha(g)y)\leq \eta_n$ for any $g\in F_n$ and $y\in E_n$. By definition, $\{F_n: n\in \N\}$ is an increasing sequence in $G$. Moreover, because $G$ can be viewed as a subgroup of $[[\alpha]]$ and the action $\alpha$ is topologically free, Remark \ref{rem: unique cocycle} implies that $G=\bigcup_{n=1}^\infty F_n$. 
 
 Define a probability measure $\mu_n=(1/|E_n|)\sum_{z\in E_n}\delta_z$. Then, \cite[Proposition 2.3]{K-N} implies that a weak*-limit point $\mu$ of $\{\mu_n: n\in\N\}$ is a $G$-invariant Borel probability measure. By passing to a subsequence $\{\mu_{k_n}\}$, we may assume $\mu_n\to \mu$ under weak*-topology. To simplify the notations in the proof of the following theorem,  we denote by $D_n=M_{k_n}$ and write $E_n$ for $E_{k_n}$, and $F_n$ for $F_{k_n}$, as well as $\{\eta_n\}$ for $\{\eta_{k_n}\}$.

	\begin{thm}\label{thm: topo group of residually finite action}
		Let $\alpha: G\curvearrowright X$ be a minimal topologically free residually finite action of a countable discrete group $G$ on the Cantor set $X$.  Then its topological full group $[[\alpha]]$ is LEF.
	\end{thm}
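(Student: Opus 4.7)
The plan is to apply the LEF criterion of Lemma \ref{lemma: compressed LEF representation} to $\Gamma=[[\alpha]]$, using the finite sets $E_n$ and the approximating $G$-actions $\beta_n$ produced by residual finiteness together with Proposition \ref{prop: free point}. For each $\gamma\in[[\alpha]]$ whose cocycle $c(\gamma)$ takes values in $F_n$, I would define $\Theta_n(\gamma):E_n\to E_n$ by
\[
\Theta_n(\gamma)(z)=\beta_n(c(\gamma)(z))(z),
\]
and assign $\Theta_n(\gamma)$ arbitrarily otherwise; this value will be irrelevant once $n$ exceeds the threshold associated with $\gamma$. Note $\Theta_n(\id_X)=\id_{E_n}$ by construction.

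To verify condition (1) of Lemma \ref{lemma: compressed LEF representation}, fix $r$ and choose $r'\geq r$ large enough that every product $\gamma_i\gamma_j$ with $1\leq i,j\leq r$ equals $\gamma_m$ for some $m\leq r'$. Let $\delta_r>0$ be smaller than the Lebesgue number of each of the finitely many clopen partitions $\CU_1,\ldots,\CU_{r'}$. For $n$ so large that $D_n\supseteq M_{r'}$ (so $F_n$ contains all relevant cocycle values) and $\eta_n<\delta_r$, I claim the maps $\Theta_n$ are strictly multiplicative on this range. Indeed, for $z\in E_n$ and $j\leq r$, writing $h=c(\gamma_j)(z)\in F_n$, the two points $\Theta_n(\gamma_j)(z)=\beta_n(h)(z)$ and $\gamma_j(z)=\alpha(h)(z)$ are within $\eta_n<\delta_r$ of one another, hence lie in the same element of $\CU_i$ for every $i\leq r'$. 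Therefore $c(\gamma_i)(\Theta_n(\gamma_j)(z))=c(\gamma_i)(\gamma_j(z))$, and the cocycle identity $c(\gamma_i\gamma_j)(z)=c(\gamma_i)(\gamma_j(z))\cdot c(\gamma_j)(z)$ forces $\Theta_n(\gamma_i\gamma_j)(z)=\Theta_n(\gamma_i)(\Theta_n(\gamma_j)(z))$ exactly, giving Hamming distance $0$. Applying this also to $\gamma_i$ and $\gamma_i^{-1}$ yields $\Theta_n(\gamma_i)\in\sym(E_n)$ for large $n$.

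For condition (2), given $\gamma_i\neq\id_X$, topological freeness and density of free points furnish a free point $x_0\in X$ with $\gamma_i(x_0)\neq x_0$. Pick an open neighborhood $W_i\ni x_0$ contained in a single element of $\CU_i$ on which $c(\gamma_i)=g_i\neq e_G$, and, by continuity, shrink it so that $d(g_i\cdot x,x)>2\eta$ for some $\eta>0$ and all $x\in W_i$. By minimality the $G$-invariant limit measure $\mu$ is fully supported, so $\mu(W_i)>0$; set $\epsilon_i=\mu(W_i)/2$. For $n$ sufficiently large one has $\eta_n<\eta$, $g_i\in F_n$, and $\mu_n(W_i)\geq\epsilon_i$. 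Then for every $z\in E_n\cap W_i$, the point $\Theta_n(\gamma_i)(z)=\beta_n(g_i)(z)$ is within $\eta_n$ of $g_i\cdot z$, hence $d(\Theta_n(\gamma_i)(z),z)>\eta>0$, so $\Theta_n(\gamma_i)(z)\neq z$ and $d_H(\Theta_n(\gamma_i),\id_{E_n})\geq \epsilon_i$. Lemma \ref{lemma: compressed LEF representation} then concludes that $[[\alpha]]$ is LEF.

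The main obstacle is that Lemma \ref{lemma: compressed LEF representation} demands \emph{exact} multiplicativity rather than the approximate version adequate for soficity. What makes this achievable here is that continuous orbit cocycles are locally constant, so the Lebesgue-number argument against the finite list of partitions $\CU_1,\ldots,\CU_{r'}$ upgrades $\eta_n$-closeness of $\alpha$- and $\beta_n$-orbits to equality of cocycle values; this is the single mechanism by which approximate commutation from residual finiteness is promoted to exact commutation, and it is essentially forced by the flexibility of having to control only finitely many cocycles at a time.
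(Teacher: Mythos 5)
Your proposal is correct and follows essentially the same route as the paper: you build $\Theta_n$ from the approximate actions $\beta_n$ applied to the (unique, since $E_n$ consists of free points) cocycle values, upgrade approximate to exact multiplicativity via the Lebesgue numbers of the finitely many cocycle partitions, bound the non-fixed-point proportion using the $G$-invariant weak* limit measure together with minimality, and conclude with Lemma \ref{lemma: compressed LEF representation}. The only (harmless) variation is in condition (2), where you use an open set $W_i$ with a metric gap $d(g_i\cdot x,x)>2\eta$ instead of the paper's clopen $A_i\subset\supp^o(\gamma_i)$ with $\gamma_i(A_i)\cap A_i=\emptyset$; both yield the required lower bound $d_H(\Theta_n(\gamma_i),\id_{E_n})\gtrsim\epsilon_i$ (adjust $\epsilon_i$ by a factor to get the strict inequality demanded by the lemma).
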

	\begin{proof}
		Fix an enumeration of $[[\alpha]]$ by $\{\id_X=\gamma_0, \gamma_1,\dots\}$. We recall the notation $D_n=\{\gamma_0, \gamma_1, \dots, \gamma_{k_n}\}$, the  set $E_n\subset X$ consisting of free points for $\alpha$,  the set $F_n=\bigcup_{i\leq k_n}\CC_i$, and the decreasing sequence $\{\eta_n: n\in \N\}$ converging to $0$ above, such that the following hold.
\begin{enumerate}
    \item The  measure
  $\mu_n=(1/|E_n|)\sum_{z\in E_n}\delta_z\to \mu$
for a $G$-invariant probability measure $\mu$ under weak*-topology. 
\item The set $E_n$ is $\eta_n$-dense in $X$ and is equipped with an action $\beta_n: G\curvearrowright E_n$ satisfying $d(\beta_n(g)(x), \alpha(g)(x))\leq \eta_n$ for any $g\in F_n$ and $x\in E_n$.
\end{enumerate}
  
     For any $\gamma_i\in [[\alpha]]$ for $i\geq 1$, i.e,  $\gamma_i\neq \id_X$, choose a non-empty clopen set $A_i\subset \supp^o(\gamma_i)$ such that $\gamma_i(A_i)\cap A_i=\emptyset$. Then since $\alpha$ is minimal, one has $\mu(A_i)=2\epsilon_i$ for some $\epsilon_i>0$, which implies that there is an $N_i>0$ such that $\mu_n(A_i)>\epsilon_i$ whenever $n>N_i$.
		Now, define $\Theta_n: [[\alpha]]\to \map(E_n)$ by 
 \[\Theta_n(\gamma_i)(x)=\beta_n(g)(x),\]
 where $g$ is the unique element in $\CC_i$ such that $\gamma_i(x)=\alpha(g)(x)$. Note that for each $n\in \N$, since $E_n$ consists of free points, Remark \ref{rem: unique cocycle} implies that for any $\gamma\in [[\alpha]]$ and $x\in E_n$, there is a unique $g\in G$ independent of the choice of orbit cocycles such that $\gamma(x)=\alpha(g)(x)$. Thus, for any $x\in E_n$, if $\id_X(x)=g\cdot x$, then $g=e_G$ holds necessarily. This implies that $\Theta_n(\id_X)(x)=\beta_n(e_G)(x)=x$ for any $x\in E_n$ and thus  $\Theta_n(\id_X)=\id_{E_n}$. 
 
 For each $\gamma_i$, we recall the clopen partition $\CU_i= \{U_1, \dots, U_{l_i}\}$ and set of group elements $\CC_i=\{g_{i,1},\dots, g_{i, l_i}\}$ introduced above such that $\gamma_i(x)=\alpha(g_{i, j})(x)$ for any $x\in U_j$.  Write $d_i$ for the Lebesgue number of the cover $\CU_i$. We now verify all such $\Theta_n$ satisfy  Lemma \ref{lemma: compressed LEF representation} above. Let $T_r=\{\gamma_0, \dots, \gamma_r\}$, and $\epsilon>0$. Choose $N>0$ such that  whenever $n>N$, one has \[\eta_n\leq \min\{\diam(\gamma_i(A_i)), d_i: 1\leq i\leq r\}.\] Then, choose $K_{r}\geq \max\{N, r, N_i: 1\leq i\leq r\}$ and let $n>K_{r}$. First, $n>r$ implies $T_r\subset D_n$. Then, $n>\max\{N_i: 1\leq i\leq r\}$ implies
		\[\mu_n(A_i)=\frac{|E_n\cap A_i|}{|E_n|}\geq \epsilon_i\]
for any $1\leq i\leq r$.
		In addition, for each $1\leq i\leq r$, observe $\gamma_i(x)\neq x$ for any $x\in A_i\cap E_n$ by the choice of $A_i$. Let $x\in A_i\cap E_n$. There exists a $g\in \CC_i\subset F_n$ such that $\gamma_i(x)=\alpha(g)(x)$. Then, by our choice of $E_n$, one has $d(\alpha(g)(x), \beta_n(g)(x))<\eta_n$, which implies that $\beta_n(g)(x)\in \gamma_i(A_i)$ as well because the choice of $n>N$. This further implies that $\Theta_n(\gamma_i)(x)\neq x$ because $\Theta_n(\gamma_i)(x)=\beta_n(g)(x)$ and $\gamma_i(A_i)\cap A_i=\emptyset$ by definition of $A_i$. Therefore, one has 
		\[|\{x\in E_n: \Theta_n(\gamma_i)(x)\neq x\}|\geq |E_n\cap A_i|\geq \epsilon_i|E_n|\]
		and thus $d_H(\Theta_n(\gamma_i), \id_{E_n})\geq \epsilon_i$ for any $1\leq i\leq r$.
		
		Let $1\leq i, j\leq r$ and $z\in E_n$, which is a free point for $\alpha$. Then 
there are a $g\in \CC_j$ and a $U\in \CU_j$ such that $z\in U$ and $\gamma_j(z)=\alpha(g)(z)$. Moreover, for $\gamma_j(z)$, there are a $h\in \CC_i$ and a $U'\in \CU_i$ such that $\gamma_j(z)=\alpha(g)(z)\in U'$ and $\gamma_i\gamma_j(z)=\alpha(h)\alpha(g)(z)$. This actually implies that $\Theta_n(\gamma_i\gamma_j)(z)=\beta_n(hg)(z)$ because $z$ is a free point for $\alpha$ and thus there is a unique $s\in G$ such that $\gamma_i\gamma_j(z)=\alpha(s)(z)$ by Remark \ref{rem: unique cocycle}. 

On the other hand, by definition, one has $\Theta_n(\gamma_j)(z)=\beta_n(g)(z)$. Then by our choice of $n>K_{r}\geq N$, the fact $d(\beta_n(g)(z), \alpha(g)(z))<\eta_n\leq d_i$ implies that $\beta_n(g)(z)\in U'$, and thus $\gamma_i(\beta_n(g)(z))=\alpha(h)(\beta_n(g)(z))$. By the fact that $\beta_n(g)(z)\in E_n$ is still a free point for $\alpha$, one has 
		\[\Theta_n(\gamma_i)\Theta_n(\gamma_j)(z)=\Theta_n(\gamma_i)(\beta_n(g)(z))=\beta_n(h)(\beta_n(g)(z))=\beta_n(hg)(z)=\Theta_n(\gamma_i\gamma_j)(z)\]
		because $\beta_n$ is an action. This means $d_H(\Theta_n(\gamma_i\gamma_j), \Theta_n(\gamma_i)\Theta_n(\gamma_j))=0$ for any $1\leq i, j\leq r$. Therefore, Lemma \ref{lemma: compressed LEF representation} shows that $[[\alpha]]$ is LEF.
	\end{proof}

	\begin{rmk}
		It was shown in \cite[Proposition 7.1]{K-N} that a $\Z$-system generated by a single homeomorphism $\varphi$ is residually finite if and only if it is \textit{chain recurrent} (see, e.g., \cite[Section 7]{K-N} for the definition). On the other hand, Pimsner showed in \cite[Lemma 2]{Pim} that a $\Z$-system is chain recurrent if and only if there is no open set $U\subset X$ such that   $\varphi(\overline{U})$ is a proper subset of $U$. Therefore, any minimal $\Z$-action $\alpha$ on the Cantor set is residually finite, which implies that $[[\alpha]]$ is LEF by Theorem \ref{thm: topo group of residually finite action}. This is exactly the result obtained in \cite[Theorem 5.1]{G-M}. See also \cite[Theorem 2]{El}. Therefore, Our Theorem \ref{thm: topo group of residually finite action} has generalized the result.
	\end{rmk}
	
	Besides minimal $\Z$-actions, there are many other examples of minimal residually finite actions on the Cantor set. For example, it was shown in \cite[Theorem 5.2]{K-N} that any minimal action of the free group $\F_r$ ($r\in \N\cup{\infty}$) on the Cantor set is residually finite if it admits an $\F_r$-invariant Borel probability measure (see \cite[Theorem 5.2]{K-N}.




 Then we have the following corollary based on the construction of certain Toeplitz subshifts $\beta$ of free groups in \cite[Theorem 1.2, 1.3]{C-C-G}. Note that such Toeplitz subshift can be chosen to be free by \cite[Remark 2.5]{C-C-G}. This provides more LEF examples of topological full groups. In particular, combining results in \cite{Ne}, the alternating subgroup $\CA(X, \F_r)$ (see the definition in \cite{Ne}) of $[[\beta]]$ constructed from this $\F_r$-Toeplitz subshift is the first example of a finitely generated simple sofic topological full group arising from minimal free actions with non-amenable acting groups.  
	
	\begin{cor}\label{cor: amenability and LEF}
   Let $\alpha: \F_r\curvearrowright X$ ($r\in \N\cup{\infty}$) be a minimal topologically free action such that there exists a $\F_r$-invariant Borel probability measure on $X$. Then $[[\alpha]]$ is LEF. In particular, there exists a minimal free $\F_r$-Toeplitz subshift $\beta$ whose topological full group $[[\beta]]$ is LEF and thus sofic.
    \end{cor}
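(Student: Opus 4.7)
The plan is to derive the corollary by assembling three ingredients that are essentially already in place: the residual finiteness result of Kerr--Nowak for free-group actions, the main Theorem \ref{thm: topo group of residually finite action} of this section, and the Cortez--Cecchi-Bernales--Gómez construction of free Toeplitz subshifts for free groups. First I would invoke \cite[Theorem 5.2]{K-N}, which is already cited in the paragraph preceding the corollary: it guarantees that any minimal action of $\F_r$ on the Cantor set carrying an $\F_r$-invariant Borel probability measure is residually finite in the sense of Definition \ref{defn: residually finite action}. Combined with the hypothesis that $\alpha$ is topologically free and minimal, this puts $\alpha$ precisely into the setting required by Theorem \ref{thm: topo group of residually finite action}, which then directly yields that $[[\alpha]]$ is LEF.

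For the ``in particular'' clause, the plan is to quote the construction of Cecchi-Bernales, Cortez, and Gómez \cite[Theorems 1.2, 1.3]{C-C-G}, together with the observation \cite[Remark 2.5]{C-C-G} guaranteeing freeness, to produce a minimal free $\F_r$-Toeplitz subshift $\beta: \F_r \curvearrowright Y$. A Toeplitz subshift is by construction a minimal almost one-to-one extension of an odometer (equicontinuous system), hence it admits an invariant Borel probability measure pulled back from the Haar measure on the odometer; in particular $\beta$ satisfies the hypotheses of the corollary. Consequently, the first assertion applies and gives that $[[\beta]]$ is LEF. Since every LEF group is sofic (as noted in the introduction), $[[\beta]]$ is also sofic, completing the proof.

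There is no genuine obstacle here: the corollary is a packaging of previously established results, and the only thing to be careful about is to verify that the Toeplitz subshift produced in \cite{C-C-G} is topologically free (which follows from minimality together with freeness) and supports an invariant probability measure (automatic by the factor map to an odometer, or by amenability of the odometer acting group together with minimality and the existence of the almost one-to-one extension). Once these two points are recorded, the argument is a one-line appeal to Theorem \ref{thm: topo group of residually finite action}.
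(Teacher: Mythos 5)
Your overall route is exactly the paper's: the corollary is a packaging of \cite[Theorem 5.2]{K-N} (minimal $\F_r$-action with an invariant measure on the Cantor set is residually finite), Theorem \ref{thm: topo group of residually finite action} (minimal topologically free residually finite $\Rightarrow$ $[[\alpha]]$ LEF), and the construction in \cite[Theorems 1.2, 1.3]{C-C-G} together with \cite[Remark 2.5]{C-C-G} of a minimal free $\F_r$-Toeplitz subshift, plus the standard fact that LEF implies sofic. On that level the proposal is correct and matches the paper.

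One justification you offer is wrong, though, and would be a genuine gap if it were load-bearing: you claim the invariant measure on the Toeplitz subshift $\beta$ is ``automatic'' because $\beta$ is an almost one-to-one extension of an odometer, with the measure ``pulled back from the Haar measure on the odometer.'' Invariant measures push forward along factor maps; they do not pull back, and since the acting group $\F_r$ is non-amenable there is no general existence theorem for invariant measures on its Cantor systems. Indeed, the existence of $\F_r$-Toeplitz subshifts carrying invariant probability measures is precisely the nontrivial content of \cite[Theorems 1.2, 1.3]{C-C-G} (note the title of that paper), so for this step you should appeal directly to those theorems — as the paper does — rather than to the odometer factor. With that substitution the argument is complete; also note that freeness alone already gives topological freeness, so no separate verification is needed there.
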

     

We finally remark the following. In \cite[Theorem 6.7]{Matui2}, Matui showed that the topological full group of a one-sided irreducible shift of finite type has the Haggerup property. One actually can construct more topological full groups with Haggerup property in the following elementary way.

 \begin{rmk}\label{rmk: Haggerup property}
 If $\alpha: G\curvearrowright X$ is a minimal free equicountinuous action of a countable discrete group $G$ with Haggerup property, then its topological full group $[[\alpha]]$ has Haggerup property as well. This is established by the same methods in \cite[Section 4]{C-M} because the Haggerup property is also preserved by taking (closed) subgroups, finite directed product, increasing union, and group extension by amenable groups.
 \end{rmk}

 \section*{Appendix: F{\o}lner functions for finitely generated subgroups of amenable topological full groups}

In this appendix, we estimate F{\o}lner functions of finitely generated subgroups of topological full groups of distal systems using the techniques in Section \ref{sec: 3}.

Let $\alpha: G\curvearrowright X$ be a distal action of an infinite amenable countable discrete group $G$ on the Cantor $X$ such that the free points are dense in $X$. Let $T$ be a finite symmetric subset of $[[\alpha]]$. Proposition \ref{prop: hyperfiniteness diagonal product} and Theorem \ref{thm: equi-amenable} provides a hyperfinite sofic approximation graph sequence for the group $\Gamma=\langle T \rangle$ by using the hyperfiniteness of F{\o}lner sequences of $G$ in Proposition \ref{rmk: Folner sequence hyperfinite}. Then Remark \ref{rmk: Folner function} helps to find F{\o}lner sets in the sofic approximations for $\Gamma$, from which, one obtains an estimation of the F{\o}lner function of $\Gamma$. We begin with the following lemma.

\begin{customlem}{A1}\label{lem: Folner to Folner}
    Let $n\in \N_+$ and $S$ be a symmetric subset in $G$ containing $e$. Suppose $F_1,\dots, F_n$ and $K_1, \dots, K_{n-1}$ are finite subsets in $G$ with $F_1=S$, satisfying $|\partial_{F_i}K_i|\leq\frac{\epsilon}{16|F_i|^2}|K_i|$ and $F_{i+1}=F_i\cup K_i$ for any  $i=1,\dots, n-1$. Then one has $e\in F_1\subset \dots \subset F_n$ and $|\partial_{F_i}F_{i+1}|\leq(\epsilon/8)\cdot|F_{i+1}|$ for all $i=1, \dots, n-1$.
\end{customlem}
\begin{proof}
    By assumption, one has $e\in F_1\subset \dots \subset F_n$ trivially. It is left to show $|\partial_{F_i}F_{i+1}|\leq(\epsilon/16)\cdot|F_{i+1}|$ for all $i=1, \dots, n-1$. Indeed, for such an $i$, the assumption $F_{i+1}=F_i\cup K_i$ entails $\partial_{F_i}F_{i+1}\subset \partial_{F_i}K_i\cup \partial_{F_i}F_i$ and thus one has
    \[|\partial_{F_i}F_{i+1}|\leq \frac{\epsilon}{16|F_i|^2}|K_i|+|F^{-1}_iF_i|\leq \frac{\epsilon}{8}|K_i|\leq \frac{\epsilon}{8}|F_{i+1}|\]
    because $\partial_{F_i}F_i\subset F^{-1}_iF_i$ and $(\epsilon/16)|K_i|\geq |F_i|^2\cdot |\partial_{F_i}K_i|\geq |F_i|^2\geq |F^{-1}_iF_i|$.
\end{proof}

In the case that the amenable group $G$ is generated by a finite symmetric set $S$ containing $e$, we define a function $\psi: \R_+\to \N_+$ by
\[\psi(\epsilon)=\min\{r: \text{there exists } F\subset B(G, r)\text{ with } |\partial_S^-F|\leq \epsilon|F|\text{ and } |F|=\fol_{G, S}(\epsilon)\}\]
aiming to evaluate the size of F{\o}lner sets in $G$. For $r\in \N_+$, we write $\vol(r)=|B(G, r)|$ for simplicity. Note in our case $S=B(G, 1)$. For an $\epsilon>0$, by recursion, we define a sequence of functions $\Phi_{S, \epsilon}, \varphi_{S, \epsilon}: \N_+\to \N_+$ by setting $\Phi_{S, \epsilon}(1)=\varphi_{S, \epsilon}(1)=|S|$.   Then define
\[\Phi_{S, \epsilon}(m+1)=\fol_{G, S}(\frac{\epsilon}{16\cdot\varphi_{S, \epsilon}(m)\cdot(\sum_{i=1}^{m}\Phi_{S, \epsilon}(i))^2\cdot(|S|+1)})\]
and 
\[\varphi_{S, \epsilon}(m+1)= \max\{\varphi_{S, \epsilon}(m), \vol(\psi(\frac{\epsilon}{16\cdot \varphi_{S, \epsilon}(m)\cdot(\sum_{i=1}^{m}\Phi_{S, \epsilon}(i))^2\cdot (|S|+1)}))\}\]
for $m=1, \dots, n-1$. We now have the following result for general amenable acting groups.

\begin{customthm}{A2}\label{thm: folner function}
 Let $\alpha: G_0\curvearrowright X$ be a distal action of an infinite amenable countable discrete group $G_0$ on the Cantor $X$ such that free points are dense. Let $T$ be a finite symmetric subset of $[[\alpha]]$ containing $\id_X$.  For each $\varphi\in T$, choose a continuous orbit cocycle $c(\varphi):X \to \Gamma$. Write $S=\bigcup_{\varphi\in T}\operatorname{ran}(c(\varphi))\subset \Gamma$, where $\operatorname{ran}(c(\varphi))$ is the range of $c(\varphi)$ in $\Gamma$. Denote by $G=\langle S\rangle$. For groups $\Gamma=\langle T\rangle\leq [[\alpha]]$, there is an  $l\in \N$ depending only on $T$ such that for any $\epsilon>0$, one has
 \[\fol_{\Gamma, T}(\epsilon)\leq (\sum_{i=1}^{n}\Phi_{S, \delta(\epsilon)}(i))^l,\]
  where  
  \[\delta(\epsilon)= \min\{\frac{\epsilon}{(1+2|T|)\cdot 2^l|T|^2\cdot 2|S|(|S|+2)}, 1-\sqrt{1-\frac{\epsilon}{(1+2|T|)\cdot 2^l|T|^2\cdot 2|S|}}\}\]
  and $n=\ceil{\log(\delta(\epsilon))/\log(1-\delta(\epsilon))}$.
  \end{customthm}
 \begin{proof}
 Let $1/2>\epsilon>0$ and denote  by $\CF=\{F_mx: m\in \N_+\}$, where $\{F_m: m\in \N_+\}$ is a F{\o}lner sequence for $G$ (instead of $G_0$) and $x\in X$ is a free point. Let $l\in \N_+$, depending on $T$ only, be obtained as in Lemma \ref{lem: sofic enough}.
 Now, let  $\delta(\epsilon)>0$ be defined above and $n=\ceil{\log(\delta(\epsilon))/\log(1-\delta(\epsilon))}$. Theorem \ref{prop: Ornstein-Weiss} and Proposition \ref{rmk: Folner sequence hyperfinite} imply that any collection $\{T_1, \dots, T_n\}$ of finite sets in $G$ satisfying $e\in T_1\subset\dots\subset T_n$ and $|\partial_{T_i}T_{i+1}|\leq (\delta(\epsilon)/8)$ produces a partition $\CP_m$ for $F_m$ such that  any $B\in \CA_m$ satisfies $|B|<|T_nx|=|T_n|$ for all large enough $m\in \N_+$, and
    \[\limsup_{m\to \infty}(|E(\CF)^{\CA}_m|/|F_m|)< \frac{\epsilon}{(1+2|T|)\cdot 2^l|T|^2}.\]
   Then, Proposition \ref{prop: hyperfiniteness diagonal product} and Theorem \ref{thm: equi-amenable} show that there exists a hyperfinite sofic approximation graph sequence $\CS=\{S_m: m\in \N\}$ such that 
    a sequence of partitions of the vertex sets $V(S_m)=\bigsqcup_{i=1}^{k_m}A^m_i$ such that
		\begin{enumerate}[label=(\roman*)]
			\item $|A^m_i|\leq |T_n|^l$ for any $m\geq 1$ and $1\leq i\leq k_m$.
			\item If $E_m^{\epsilon}$ is the set of edges $(x, y)\in E(S_m)$ such that $x\in A^m_i$, $y\in A^m_j$ for $i\neq j$ then 
			$\limsup_{m\to \infty}(|E^{\epsilon}_m|/|V(S_m)|)< \epsilon$.
		\end{enumerate}     
    Remark \ref{rmk: Folner function} then implies that $\fol_{\Gamma, T}(\epsilon)\leq |T_n|^l$. 

    Finally, we construct such a sequence $T_1\subset\dots\subset T_n$ by induction. Note that $S$ is symmetric. Define $T_1=S$, which already contains $e_G$ and is symmetric. Recall the function $\psi$ and $\Phi_S$ defined above. Suppose $T_1\subset\dots\subset T_{j}$ has been defined as satisfying $|T_j|\leq \sum_{i=1}^j\Phi_{S, \delta(\epsilon)}(i)$ and $T_j\subset B(G, r)$ for the $r$ such that $\varphi_{S, \delta(\epsilon)}(j)=\vol(r)$. Choose a F{\o}lner set $K$ such that \[|\partial^-_SK|\leq \frac{\delta(\epsilon)}{16\cdot\varphi_{S, \delta(\epsilon)}(j)\cdot(\sum_{i=1}^{j}\Phi_{S, \delta(\epsilon)}(i))^2\cdot(|S|+1)}|K|\]
    and $|K|=\Phi_{S, \delta(\epsilon)}(j+1)$. Then Remark \ref{rmk: bdry} implies that \[|\partial_1K|=|\partial_SK|\leq \frac{\delta(\epsilon)}{16\cdot\varphi_{S, \delta(\epsilon)}(j)\cdot(\sum_{i=1}^{j}\Phi_{S, \delta(\epsilon)}(i))^2}|K|\] and 
    \[|\partial_{T_j}K|\leq |\partial_{r}K|\leq |B(G, r)|\cdot|\partial_1K|\leq \frac{\delta(\epsilon)}{16|T_j|^2}\]
    We now define $T_{j+1}=T_j\cup K$. Note that $K$ can be chosen to be contained in the ball with radius $r'=\psi(\frac{\delta(\epsilon)}{16\cdot \varphi_{S, \delta(\epsilon)}(j)\cdot(\sum_{i=1}^{j}\Phi_{S, \epsilon}(i))^2\cdot (|S|+1)})$ by definition of $\psi$. This then implies that $T_{j+1}\subset B(G, \max\{r, r'\})$. Note that indeed \[\varphi_{S, \delta(\epsilon)}(j+1)=\max\{\varphi_{S, \delta(\epsilon)}(j), \vol(r')\}=\vol(\max\{r, r'\})\]
   and
    \[|T_{j+1}|\leq |T_j|+|K|\leq \sum_{i=1}^{j+1}\Phi_{S, \epsilon}(i).\]
     We thus have finished the construction by induction    
    and observe. Lemma \ref{lem: Folner to Folner} implies that $e\in T_1\subset \dots\subset T_n$ provide desired partitions $\CP_m$ of $F_m$ above for all large enough $m$ by Proposition \ref{rmk: Folner sequence hyperfinite}. This shows $\fol_{\Gamma, T}(\epsilon)\leq |T_n|^l\leq (\sum_{i=1}^{n}\Phi_{S, \delta(\epsilon)}(i))^l$ where $n=\ceil{\log(\delta(\epsilon))/\log(1-\delta(\epsilon))}$.    
     
\end{proof}

Suppose $G_0$ in Theorem \ref{thm: folner function} is of \textit{local sub-exponential growth}, i.e., any finitely generated subgroup $G$ is of sub-exponential growth. Then F{\o}lner sets of $G=\langle S\rangle$ in Theorem \ref{thm: folner function} may come from balls $B(G, r)$. Note that the function $\Phi_{S, \epsilon}$ and $\varphi_{S, \epsilon}$ above, in the recursion process of Ornstein-Weiss, evaluate the cardinality of the new F{\o}lner set in an inductive step and the size of the smallest ball that this F{\o}lner set is located in. Therefore, in the case that $G$ is of sub-exponential growth, we may obtain a recursion function with a simpler form. Similar to the general F{\o}lner function, we define 
\[\widetilde{\fol}_{G, S}(\epsilon)=\min\{r\in \N_+: |\partial^-_S(B(G, r))|\leq \epsilon|B(G, r)|\}\]
and define
$\Psi_{S, \epsilon}$ by setting $\Psi_{S, \epsilon}(1)=\vol(1)$ and
\[\Psi_{S, \epsilon}(m+1)=\vol(\widetilde{\fol}_{G, S}(\frac{\epsilon}{16\cdot (\Psi_{S, \epsilon}(m))^3\cdot(|S|+1)})).\]
Therefore, the same argument of Theorem \ref{thm: folner function} could be used to establish the following.

\begin{customthm}{A3}\label{thm: Folner function 2}
    Let $\alpha: G_0\curvearrowright X$ be a distal action of a group $G_0$, which is of local sub-exponential growth, on the Cantor $X$ and $T$ a finite symmetric subset of $[[\alpha]]$ containing $\id_X$. Suppose the free points for $\alpha$ are dense. For each $\varphi\in T$, choose a continuous orbit cocycle $c(\varphi):X \to \Gamma$ and write $S=\bigcup_{\varphi\in T}\operatorname{ran}(c(\varphi))\subset \Gamma$.
Denote by $G=\langle S\rangle$. For group $\Gamma=\langle T\rangle\leq [[\alpha]]$, there is an  $l\in \N$ depending only on $T$ such that for any $\epsilon>0$, one has \[\fol_{\Gamma, T}(\epsilon)\leq \Psi_{S, \delta(\epsilon)}(\ceil{\log(\delta(\epsilon))/\log(1-\delta(\epsilon))})\]
where  
  \[\delta(\epsilon)= \min\{\frac{\epsilon}{(1+2|T|)\cdot 2^l|T|^2\cdot 2|S|(|S|+2)}, 1-\sqrt{1-\frac{\epsilon}{(1+2|T|)\cdot 2^l|T|^2\cdot 2|S|}}\}.\]
 
\end{customthm}
	
	\section{Acknowledgement}
	The author is indebted to Maksym Chaudkhari, Yongle Jiang, Kate Juschenko, David Kerr, Hanfeng Li, Ning Ma, Guohua Zhang, and Tianyi Zheng for their encouragement,  discussion and comments, careful reading of the early drafts, corrections, and informing the author of necessary references.



\end{document}